\newtheorem{theorem}{Theorem}
\numberwithin{theorem}{section}
\newtheorem{proposition}[theorem]{Proposition}
\newtheorem{lemma}[theorem]{Lemma}
\newtheorem{corollary}[theorem]{Corollary}
\newtheorem{remark}[theorem]{Remark}
\newtheorem{conjecture}[theorem]{Conjecture}
\newenvironment{example}
{\pushQED{\qed}\examplex}
{\popQED\endexamplex}
\newcommand{\ZZ}{\mathbb{Z}}
\newcommand{\RR}{\mathbb{R}}
\newcommand{\CC}{\mathbb{C}}
\newcommand{\PP}{\mathbb{P}}
\newcommand\blankfootnote[1]{%
  \let\thefootnote\relax\footnotetext{#1}%
  \let\thefootnote\svthefootnote%
}
 \title{Proudfoot-Speyer degenerations of ~\\ scattering equations}
  \author{Barbara Betti}
  \address{%
  MPI for Mathematics in the Sciences, Leipzig \\
\email{barbara.betti@mis.mpg.de}
}
  \author{Viktoriia Borovik}
  \address{%
  University of Osnabr\"uck \\
\email{vborovik@uni-osnabrueck.de}
}
\author{Simon Telen}
\address{%
  MPI for Mathematics in the Sciences, Leipzig \\
\email{simon.telen@mis.mpg.de}
}
\date{28/08/2024}
\begin{document}
\maketitle

\begin{abstract}
\noindent We study scattering equations of hyperplane arrangements from the perspective of combinatorial commutative algebra and numerical algebraic geometry. We formulate the problem as linear equations on a reciprocal linear space and develop a degeneration-based homotopy algorithm for solving them. We investigate the Hilbert regularity of the corresponding homogeneous ideal and apply our methods to CHY scattering equations. %amplitudes.
\end{abstract}

\section{Introduction}
\blankfootnote{\textit{Keywords.} hyperplane arrangements, scattering equations, maximum likelihood estimation, homotopy continuation, Hilbert regularity}\blankfootnote{\textit{2020 Mathematics Subject Classification.} 52C35, 65H14, 13F55}Consider $n+1$ hyperplanes in $\mathbb{C}^d$, defined by $\ell_0(x) = 0, \, \ldots, \, \ell_n(x) = 0$. Here, $\ell_i \in \mathbb{C}[x_1, \ldots, x_d]$ are affine-linear functions in $d$ variables. 
The following logarithmic potential function serves as the scattering potential in CHY theory~\cite{cachazo2014scattering}:
\begin{equation}  \label{eq:loglikelihood_intro}
{\cal L}_u (x) \, = \, \log \ell_0^{u_0} \ell_1^{u_1} \cdots \ell_n^{u_n} \, = \, u_0 \log \ell_0 + u_1 \log \ell_1 + \cdots + u_n \log \ell_n. \end{equation}
This function depends on parameters $u = (u_0, \ldots, u_n)$ which take complex values. Motivated by the physics application, see for instance \cite{cachazo2014scattering,lam2024moduli,sturmfels2021likelihood}, we are interested in solving its critical point equations for generic $u$: 
\begin{equation} \label{eq:scatteringeqs}  \frac{\partial{\cal L}_u}{\partial x_1} \, = \, \cdots \, = \,  \frac{\partial{\cal L}_u}{\partial x_d}\, = \, 0.  \end{equation}
Notice that these equations are invariant under scaling the linear forms $\ell_i$ by non-zero constants; they only depend on the arrangement ${\cal A} = V(\ell_0 \cdots \ell_n) \subset \mathbb{C}^d$ and are given by well-defined rational functions on its complement $X = \mathbb{C}^d \setminus {\cal A}$. We refer to~\eqref{eq:scatteringeqs} as the scattering equations of the hyperplane arrangement ${\cal A}$. 

We collect the coefficients of $\ell_0, \ldots, \ell_n$ in a matrix $L \in \mathbb{C}^{(d+1) \times (n+1)} $ such~that 
\[ L^\top \, = \, \begin{pmatrix}
    -b & A
\end{pmatrix} \quad \text{and} \quad (\ell_0(x), \ldots, \ell_n(x)) \, = \, (1, x_1, \ldots, x_d) \cdot L.\]
Here $A$ has size $(n+1) \times d$, and $\ell_i(x)$ is the $i$-th entry of $A \, x - b$ (counting starts at zero). With this notation, we can rewrite the scattering equations \eqref{eq:scatteringeqs} as 
\begin{equation}  \label{eq:embeddedeq} A^\top {\rm diag}(u)  \, y \, = \, 0 \quad \text{and} \quad y \in {\rm im} \, \phi, \end{equation}
where $\phi: X \rightarrow \mathbb{P}^n$ is the morphism $x \mapsto (\ell_0^{-1}(x): \cdots : \ell_n^{-1}(x))$ and ${\rm diag}(u)$ is an $(n+1) \times (n+1)$ diagonal matrix with diagonal entries $u_0, \ldots, u_n$. From an algebro-geometric perspective, it is natural to relax the condition $y \in {\rm im} \, \phi$ by replacing the image of $\phi$ with its closure in the projective space $\mathbb{P}^n$. This is the reciprocal linear space associated to the row span of $L$, denoted by $ {\cal R}_L$. 

In the terminology of Proudfoot and Speyer \cite{Proudfoot2006}, reciprocal linear spaces are spectra of broken circuit rings. Their geometric properties are encoded by the matroid $M(L)$ represented by $L$. This includes dimension, degree, singular locus and a nice stratification of $ {\cal R}_L$ \cite{Proudfoot2006,sanyal2013entropic}. Reciprocal linear spaces appear naturally in regularized linear programming \cite{de2012central}. A key geometric feature for our purposes is the fact that $  {\cal R}_L$ admits a Gr\"obner degeneration to a reduced union of coordinate subspaces. On the algebraic side, a universal Gr\"obner basis for the vanishing ideal $I({\cal R}_L)$ degenerates to a set of generators for a square-free monomial ideal $J$. We call this a Proudfoot-Speyer degeneration of $  {\cal R}_L$. Our paper turns such degenerations into practice. We develop a homotopy algorithm for solving \eqref{eq:embeddedeq} which starts by solving linear equations on~$V(J)$. This requires only combinatorics and linear algebra. Next, we lift the solutions in~$V(J)$ through the degeneration to the solutions of \eqref{eq:embeddedeq} in $  {\cal R}_L$. Here is an example with $d = 2, n = 3$. 

\begin{example} \label{ex:intro}
    The function ${\cal L}_u(x) = x_1^{u_0}x_2^{u_1}(2-x_1-2x_2)^{u_2}(2-2x_1-x_2)^{u_3}$ uses 
    \begin{equation} \label{eq:Lintro} L \, = \, \begin{pmatrix}
        0 & 0 & 2 & 2 \\ 1 & 0 & -1 & -2 \\ 0 & 1 & -2 & -1
    \end{pmatrix}.\end{equation}
    The scattering equations are two rational function equations in two unknowns:
    \[ \frac{u_0}{x_1} - \frac{u_2}{2-x_1-2x_2} - \frac{2\, u_3}{2-2x_1-x_2} \, = \, \frac{u_1}{x_2} - \frac{2\, u_2}{2-x_1-2x_2} - \frac{ u_3}{2-2x_1-x_2} \, = \, 0.\]
     In coordinates $y_i = \ell_i^{-1}$, the system of equations \eqref{eq:embeddedeq}, replacing ${\rm im} \, \phi$ with ${\cal R}_L$, is
    \begin{equation}  \label{eq:scatteringexample} \begin{matrix} u_0 y_0 -u_2y_2 -2 u_3y_3 \, = \, u_1y_1 -2u_2y_2 -u_3y_3 \, = \, 0, \\ 
    y_1y_2y_3 - y_0 y_2y_3 - y_0y_1y_3 + y_0y_1y_2 \, = \, 0. \end{matrix} \end{equation}
    The last equation defines the cubic surface $  {\cal R}_L \subset \mathbb{P}^3$. After fixing generic values for $u$, the first two equations define a line $\mathbb{L}_u$ in $\mathbb{P}^3$. This line hits ${\rm im} \, \phi \subset   {\cal R}_L$ in three points. Their pre-images under $\phi$ are the three solutions to the scattering equations of ${\cal A}$ (see Figure \ref{fig:4lines}). A Proudfoot-Speyer degeneration of ${\cal R}_L$ is
    \[  y_1y_2y_3 - t^{1} \, y_0 y_2y_3 - t^{2} \, y_0y_1y_3 + t^3 \, y_0y_1y_2 \, = \, 0. \]
    For $t = 1$, this is the equation for ${\cal R}_L$. For $t = 0$, this defines the union of three coordinate planes $V(J)$, where $J = \langle y_1y_2y_3 \rangle$. Our line $\mathbb{L}_u$ hits each of these planes in a single point. These points are easily computed by solving linear equations. As $t$ varies from $0$ to $1$, the points in $\mathbb{L}_u \cap V(J)$ move to the solutions of \eqref{eq:scatteringexample}. A homotopy algorithm tracks the points numerically as~$t \rightarrow 1$.
\end{example}

The data from Example \ref{ex:intro} are particularly nice. In general, it might be necessary to vary the linear space $\mathbb{L}_u \, = \, \{ y \, : \, A^\top \, {\rm diag}(u) \, y = 0 \}$ throughout the homotopy, and $\mathbb{L}_u \cap {\cal R}_L$ may contain more points than the solution set of \eqref{eq:embeddedeq}. That is, for certain choices of $L$, the scattering equations always have solutions on the boundary ${\cal R}_L \setminus {\rm im} \, \phi$, see Theorem \ref{thm:flats}. Generically, this does not happen. 

\begin{theorem} \label{thm:optimal_intro}
    For generic $L \in \mathbb{C}^{(d+1) \times (n+1)}$ and generic $u \in \mathbb{C}^{n+1}$, the Proudfoot-Speyer homotopy described in Section \ref{sec:4} is optimal, meaning that the number of solution paths tracked equals the number of solutions to \eqref{eq:scatteringeqs} in $X$.
\end{theorem}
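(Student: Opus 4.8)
\emph{Proof strategy.} The plan is to reduce optimality to the identity $\deg\mathcal R_L=|\chi(X)|$ together with two genericity facts: that the homotopy of Section~\ref{sec:4} tracks exactly $\deg\mathcal R_L$ paths, and that for generic $u$ the linear section $\mathbb L_u\cap\mathcal R_L$ is finite, reduced and contained in $\operatorname{im}\phi$. Granting these, the number of paths equals $\deg\mathcal R_L=|\chi(X)|$, which is the number of critical points of $\mathcal L_u$ in $X$, and we are done.

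First I would count the paths. From the Proudfoot--Speyer picture, the one-parameter family $\mathcal R_L^{(t)}$ with $\mathcal R_L^{(1)}=\mathcal R_L$ and $\mathcal R_L^{(0)}=V(J)$ is flat, so $\deg V(J)=\deg\mathcal R_L$; moreover $J$ is a square-free monomial ideal, hence $V(J)$ is a reduced, pure $d$-dimensional union of coordinate subspaces whose number of (top-dimensional) components equals $\deg V(J)$. The start system intersects $V(J)$ with a linear space of complementary dimension $n-d$; for a generic such linear space each component is met in exactly one reduced point and all lower-dimensional strata are avoided, so the start system has $\deg\mathcal R_L$ nonsingular solutions, and this is the number of paths tracked. (For generic $u$ the family $\{\mathbb L_u\}$ is not contained in the proper ``degenerate'' locus, so this applies whether or not the algorithm first moves $\mathbb L_u$ to a generic linear space; and since $\mathcal R_L\subset\mathbb P^n$ is closed and the degeneration is proper over the $t$-line, no path diverges.)

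Next I would identify the endpoints. Writing the reformulation $\mathbb L_u\cap\operatorname{im}\phi=\{\text{critical points of }\mathcal L_u\text{ in }X\}$, the classical theorem of Varchenko, Orlik--Terao and Huh gives that for generic $u$ this set consists of exactly $|\chi(X)|$ reduced points, with $|\chi(X)|=\binom nd$ when $\mathcal A$ is generic. Now invoke Theorem~\ref{thm:flats}: for generic $L$ the matroid $M(L)$ is uniform, so the combinatorial obstruction it isolates is absent and, for generic $u$, $\mathbb L_u$ meets no boundary stratum of $\mathcal R_L\setminus\operatorname{im}\phi$. Hence $\mathbb L_u\cap\mathcal R_L=\mathbb L_u\cap\operatorname{im}\phi$, which is finite. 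Being a proper linear section of the $d$-dimensional degree-$\deg\mathcal R_L$ variety $\mathcal R_L$ by an $(n-d)$-plane, this set carries $\deg\mathcal R_L$ points counted with multiplicity; since every such point is a nondegenerate critical point, hence reduced, and there are $|\chi(X)|$ of them, we get $\deg\mathcal R_L=|\chi(X)|$. Combined with the path count, the homotopy tracks $\deg\mathcal R_L=|\chi(X)|$ paths, matching the number of solutions of \eqref{eq:scatteringeqs} in $X$; the same genericity shows the endpoints are pairwise distinct, so each path reaches a different solution.

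The main obstacle is the middle step: one is not slicing $\mathcal R_L$ by a fully generic family of $(n-d)$-planes but by the \emph{constrained} family $\mathbb L_u=\ker(A^\top\operatorname{diag}(u))$, and one must know both that a generic member meets $\mathcal R_L$ transversally --- which is exactly why Huh's count of critical points (proved through the very affine / characteristic-variety structure of $X$, not by a naïve Bertini argument) is indispensable --- and that it avoids every boundary reciprocal linear space $\mathcal R_{L|_F}$ attached to a proper flat $F$, which is the content of Theorem~\ref{thm:flats} for the uniform matroid. One could instead reach $\deg\mathcal R_L=|\chi(X)|$ by evaluating both sides directly, $\deg\mathcal R_L=|\mu(M(L))|=\binom nd$ via Proudfoot--Speyer and $|\chi(X)|=\binom nd$ via Orlik--Solomon; but the route above has the merit of simultaneously excluding boundary solutions and path divergence, which is what ``optimal'' really demands.
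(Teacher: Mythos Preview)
Your argument is correct and follows the same route as the paper: the paper deduces Theorem~\ref{thm:optimal_intro} from Theorem~\ref{thm:optimal} via Example~\ref{ex:uniformLandA}, which is exactly your chain ``paths $=\deg\mathcal R_L$; for generic $L$ the flat criterion of Corollary~\ref{cor:criterion}/Theorem~\ref{thm:flats} holds, so $\mathbb L_u\cap\mathcal R_L=\mathbb L_u\cap\operatorname{im}\phi$; by Orlik--Terao this has $(-1)^d\chi(X)$ reduced points, forcing $\deg\mathcal R_L=(-1)^d\chi(X)$.'' One small correction: uniformity of $M(L)$ alone does not imply $\operatorname{rank}(A_I^\top)=\operatorname{rank}(L_I)$ for all proper flats (take $L$ invertible with two equal columns in $A^\top$); the paper's Example~\ref{ex:uniformLandA} instead uses that $M(A^\top)$ is uniform, which of course also holds for generic~$L$.
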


In algebraic statistics, the scattering equations \eqref{eq:scatteringeqs} appear in maximum likelihood estimation for discrete linear models. In that context, one assumes that $\ell_0(x) + \cdots + \ell_n(x) = 1$. The model is the intersection of the $n$-dimensional probability simplex with the image of the parametrization $x \mapsto (\ell_0(x), \ldots, \ell_n(x))$. The function ${\cal L}_u(x)$ is the log-likelihood function corresponding to an experiment in which state $i \in \{0,\ldots, n\}$ was observed $u_i$ times. Computing the maximizer of ${\cal L}_u(x)$ is a standard way of infering which distribution in the model best explains the data. We elaborate on the statistics application in Section \ref{sec:2}.

\begin{comment}
We note that our algorithm based on Proudfoot-Speyer degenerations can be used to solve any system of rational function equations of the form 
\[ \sum_{j = 0}^n \frac{c_{ij}}{\ell_j(x)} \, = \, 0, \quad i = 1, \ldots d, \quad c_{ij} \in \mathbb{C}. \]
In particular, the coefficients $c_{ij}$ need not depend on $L$. 
\end{comment}

In algebraic terms, viewing the scattering equations as linear equations on~${\cal R}_L$ means that we interpret the linear forms $A^\top \, {\rm diag}(u) \, y$ as elements of the homogeneous coordinate ring of ${\cal R}_L$. They generate an ideal denoted by \[ I_u = \big \langle {\textstyle\sum}_{j = 0}^n a_{ij}u_j \, y_j \, \mid  i = 1, \ldots, d \big \rangle \, \subset \, \mathbb{C}[{\cal R}_L] = \mathbb{C}[y_0, \ldots, y_n]/I({\cal R}_L).  \]
The complexity of computing the solutions in ${\cal R}_L$ algebraically, e.g., using Gr\"obner bases, is governed by the regularity of $I_u \subset \mathbb{C}[{\cal R}_L]$. This is classical for homogeneous equations on projective space \cite{bayer1987criterion}. %, and recent advances in the multigraded case include \cite{bender2022toric,bender2024multigraded}. 
For equations on arithmetically Cohen-Macaulay projective varieties, such as ${\cal R}_L$ \cite[Proposition 7]{Proudfoot2006}, see \cite[Theorem 5.4]{betti2025solving}. The following result bounds the Hilbert regularity.
\begin{theorem} \label{thm:hilbert_intro}
    Let $L \in \mathbb{C}^{(d+1) \times (n+1)}$ be of rank $d+1$. Let $\mathbb{L}\subset \mathbb{P}^n$ be a linear subspace of dimension $n-d$ which intersects ${\cal R}_L$ in $\deg {\cal R}_L$ many points, counting multiplicities, and let $I(\mathbb{L})$ be its ideal in $\mathbb{C}[{\cal R}_L]$. The Hilbert function 
    ${\rm HF}_{\mathbb{L} \cap {\cal R}_L}(q) = \dim_{\mathbb{C}} \mathbb{C}[{\cal R}_L]_q/I({\mathbb{L}})_q$ equals $\deg {\cal R}_L$ for all $q \geq d$. 
\end{theorem}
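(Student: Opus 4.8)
The plan is to read off $\mathrm{HF}_{\mathbb{L}\cap{\cal R}_L}$ from the Hilbert series of $\mathbb{C}[{\cal R}_L]$ after cutting by the linear forms defining $\mathbb{L}$. Write $R=\mathbb{C}[{\cal R}_L]$. By \cite[Proposition 7]{Proudfoot2006} the ring $R$ is Cohen--Macaulay of Krull dimension $d+1$, so its Hilbert series has the form $\mathrm{HS}_R(t)=h(t)/(1-t)^{d+1}$ with $h(t)=1+h_1t+\cdots+h_st^s$, all $h_i\geq 0$, and $h(1)=\deg{\cal R}_L$. The first step is to show $s=\deg h(t)\leq d$. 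For this I would use the Proudfoot--Speyer degeneration: $\mathrm{HS}_R$ equals the Hilbert series of $\mathbb{C}[y_0,\dots,y_n]/J$, where $J$ is the Stanley--Reisner ideal of the broken circuit complex $\mathrm{BC}(M(L))$ for the relevant term order (initial degeneration preserves the Hilbert function). Since the $\ell_i$ are genuine linear forms, $M(L)$ is loopless, and then $\mathrm{BC}(M(L))$ is a cone over its minimal vertex, because no broken circuit contains the global minimum of the ground set. Hence its $h$-vector coincides with that of the link of the apex, which is a $(d-1)$-dimensional simplicial complex and therefore has $h$-vector supported in degrees $0,\dots,d$. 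This yields $s\leq d$. (Alternatively, one may cite the known description of the $h$-vector of ${\cal R}_L$ in terms of the reduced broken circuit complex, or the identity $\mathrm{reg}(R)=\deg h(t)$ for the Cohen--Macaulay ring $R$ together with the bound $\mathrm{reg}(\mathbb{C}[{\cal R}_L])\leq d$.)

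The second step is to observe that the linear forms cutting out $\mathbb{L}$ form a regular sequence on $R$. Choose $\lambda_1,\dots,\lambda_d\in\mathbb{C}[y_0,\dots,y_n]_1$ with $\mathbb{L}=V(\lambda_1,\dots,\lambda_d)$, and let $\bar\lambda_i$ denote their images in $R$, so that $I(\mathbb{L})=(\bar\lambda_1,\dots,\bar\lambda_d)$ and $S:=R/I(\mathbb{L})$. The hypothesis that $\mathbb{L}\cap{\cal R}_L$ is finite and nonempty forces $\dim S=1=\dim R-d$, so $\bar\lambda_1,\dots,\bar\lambda_d$ is part of a homogeneous system of parameters of $R$; since $R$ is Cohen--Macaulay, it is a regular sequence. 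Consequently the short exact sequences obtained by successively dividing by the $\bar\lambda_i$ give
\[
\mathrm{HS}_S(t)=(1-t)^d\,\mathrm{HS}_R(t)=\frac{h(t)}{1-t}.
\]

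From $\mathrm{HS}_S(t)=h(t)/(1-t)$ one reads off $\mathrm{HF}_{\mathbb{L}\cap{\cal R}_L}(q)=\dim_{\mathbb{C}}S_q=\sum_{i=0}^{\min(q,s)}h_i$. For $q\geq d\geq s$ this equals $\sum_{i=0}^{s}h_i=h(1)=\deg{\cal R}_L$, which is the claim. (As a byproduct, $S$ being Cohen--Macaulay of dimension $1$ also guarantees that the extended ideal $(\bar\lambda_1,\dots,\bar\lambda_d)$ already agrees with the saturated ideal of the scheme $\mathbb{L}\cap{\cal R}_L$, so there is no ambiguity in what ``$I(\mathbb{L})$ in $\mathbb{C}[{\cal R}_L]$'' means here.)

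I expect the main obstacle to be the bound $\deg h(t)\leq d$ of the first step: this is the one place where the combinatorics of the matroid $M(L)$ genuinely enters, and one must check both that ${\cal R}_L$ carries the $h$-vector of the (reduced) broken circuit complex and that the loopless hypothesis makes $\mathrm{BC}(M(L))$ a cone. The remaining two steps are routine commutative algebra about regular sequences in Cohen--Macaulay graded rings and manipulation of Hilbert series.
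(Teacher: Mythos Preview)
Your proof is correct and follows essentially the same route as the paper: bound $\deg h(t)\le d$ via the broken circuit complex (the paper does this in Proposition~\ref{prop: HReg broken circuit ring}, citing \cite[Proposition~7.4.7]{bjorner1992homology} rather than your explicit cone argument), then use Cohen--Macaulayness to make the $d$ linear forms a regular sequence and read off the Hilbert series of the quotient (the paper packages this step as a citation to \cite[Theorem~5.4]{betti2025solving}). Your version is simply more self-contained.
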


The outline is as follows. Section \ref{sec:2} recalls the basics on scattering equations and reciprocal linear spaces. Section \ref{sec:3} makes the genericity condition in Theorem \ref{thm:optimal_intro} precise by characterizing when the number of solutions to \eqref{eq:scatteringeqs} agrees with the degree of ${\cal R}_L$. Section \ref{sec:4} describes our homotopy algorithm and its implementation. Our Julia code is available at the MathRepo page \cite{mathrepo}. Section~\ref{sec:5} is on the relevant case for physics, where $X \simeq \mathcal{M}_{0,m}$ is the moduli space of $m$-pointed genus $0$ curves. Theorem \ref{thm:mainsec5} explains how the points in $\mathbb{L}_u \cap {\cal R}_L$ correspond to the scattering solutions for certain subsets of $m' = 4, 5, \ldots, m$ particles, assuming a conjecture on their multiplicities. Finally, Section \ref{sec:6} features a proof of Theorem \ref{thm:hilbert_intro} and Macaulay matrix constructions.

\section{Scattering equations, reciprocal linear spaces and maximum likelihood} \label{sec:2}

%\textcolor{blue}{scattering equations: \# solutions is number of bounded chambers, Varchenko, $\mathcal{M}_{0,m}$ example. Mention statistics and ML degree. Reciprocal linear spaces: Gr\"obner basis. Repeat equivalence of \eqref{eq:scatteringeqs} and \eqref{eq:embeddedeq}, end with an example where number of solutions $\neq$ degree. }

This section collects preliminary facts about the equations \eqref{eq:scatteringeqs} and about reciprocal linear spaces. We start with the number of solutions. That number is constant for almost all values of $u$, and depends only on the topology of $X = \mathbb{C}^d \setminus {\cal A}$. We assume that the hyperplane arrangement ${\cal A}$ is \emph{essential}, meaning that there is a subset of its hyperplanes which intersect in a single point. Let $\chi(X)$ be the topological Euler characteristic of $X$. The following is \cite[Theorem~1.1]{orlik1995number}.

\begin{theorem} \label{thm:orlik-terao}
    For essential ${\cal A}$ and generic values of $u \in \mathbb{C}^{n+1}$, the scattering equations \eqref{eq:scatteringeqs} have only isolated solutions. There are $(-1)^d \cdot \chi(X)$ solutions in total. Moreover, all these solutions are non-degenerate critical points of ${\cal L}_u$, meaning that the Hessian determinant $\det \left( \frac{\partial^2 {\cal L}_u}{\partial x_i \partial x_j}  \right)_{ij}$ is non-zero at each solution.
\end{theorem}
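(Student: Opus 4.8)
The plan is to recognise the scattering solutions as the zeros of a logarithmic $1$-form and to count them by a Chern-class computation on a good compactification; the non-degeneracy will then be a byproduct of the same genericity argument. (This is \cite[Theorem~1.1]{orlik1995number}; in the paper one simply cites it, but here is the underlying geometry.)

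First, I would note that on $X$ the critical points of ${\cal L}_u$ are exactly the zeros of the logarithmic $1$-form
\[
\omega_u \;=\; \dlog\bigl(\ell_0^{u_0}\cdots \ell_n^{u_n}\bigr)\;=\;\sum_{i=0}^n u_i\,\frac{\dif \ell_i}{\ell_i}\;\in\; H^0(X,\Omega^1_X).
\]
Since each $\ell_i$ is affine-linear and nonconstant, every single $\dlog\ell_i$ is nowhere vanishing on $X$, so the linear system spanned by the $\dlog\ell_i$ is base-point-free there; essentiality of ${\cal A}$ adds that the constant $1$-forms $\dif\ell_i$ span the cotangent space at each point. A Bertini/generic-smoothness argument then gives that for $u$ outside a proper Zariski-closed subset of $\CC^{n+1}$ the zero scheme $\Crit({\cal L}_u)$ is zero-dimensional, reduced, and contained in $X$. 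Reducedness of this scheme is precisely the non-vanishing of $\det\bigl(\partial^2{\cal L}_u/\partial x_i\partial x_j\bigr)$ at each solution, so the ``isolated'' and ``non-degenerate'' assertions follow at once.

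For the count I would pass to a smooth projective compactification $X\hookrightarrow \bar X$ with simple normal crossings boundary $D=\bar X\setminus X$ (e.g.\ resolve the closure of the graph of $\phi$, or use a wonderful model of ${\cal A}$). Then $\omega_u$ extends to a global section of the rank-$d$ locally free sheaf $\Omega^1_{\bar X}(\log D)$, and it has a logarithmic pole along each boundary divisor with residue governed by the corresponding $u_i$. Hence for generic $u$ (so that no relevant residue vanishes) this section has no zeros on $D$ and, by the previous paragraph, only non-degenerate zeros on $X$; therefore
\[
\#\,\Crit({\cal L}_u)\;=\;\int_{\bar X} c_d\bigl(\Omega^1_{\bar X}(\log D)\bigr).
\]
By the logarithmic Gauss--Bonnet formula the right-hand side equals $(-1)^d\,\chi(\bar X\setminus D)=(-1)^d\chi(X)$; alternatively one expands $c\bigl(\Omega^1_{\bar X}(\log D)\bigr)$ through the residue exact sequences and checks that the alternating sum telescopes to $(-1)^d\bigl(\chi(\bar X)-\chi(D)\bigr)$. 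The outcome does not depend on the chosen compactification, since the left-hand side only counts zeros of $\omega_u$ lying in $X$.

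The hard part will be the genericity step: producing a single dense open set of parameters $u$ over which there are simultaneously no degenerate critical points and no critical points escaping onto ${\cal A}$ (equivalently, no zeros of the extended section on any stratum of $D$). Keeping track of the residues of $\omega_u$ along every boundary stratum at once is exactly where the essential hypothesis is forced. A route that avoids compactifications is Varchenko's: specialise to a generic \emph{real} arrangement, show each bounded region of the real complement carries exactly one, necessarily non-degenerate, critical point of the real master function, invoke Zaslavsky's count of bounded regions as $(-1)^d\chi(X)$, and finish by upper-semicontinuity of the number of isolated solutions to return to generic complex $u$.
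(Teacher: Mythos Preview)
The paper does not prove this theorem; it records it as \cite[Theorem~1.1]{orlik1995number} and points to Huh's generalisation \cite[Theorem~1]{Huh2013} for arbitrary smooth very affine varieties. Your main sketch --- zeros of a logarithmic $1$-form on $X$, a snc compactification $\bar X$, genericity of $u$ to keep all zeros off the boundary, and the identity $\int_{\bar X} c_d\bigl(\Omega^1_{\bar X}(\log D)\bigr) = (-1)^d\chi(X)$ --- is exactly the strategy of those references, so there is nothing to compare: you have reconstructed the cited argument. Your diagnosis that the delicate step is excluding zeros on every boundary stratum is correct, and essentiality enters precisely where you say, to make the incidence variety $\{(x,u):\omega_u(x)=0\}$ smooth so that generic smoothness applies.

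One caution about the alternative route at the end. Varchenko's theorem (Theorem~\ref{thm:varchenko} here) requires $L$ to be real, so ``specialise to a generic real arrangement'' only treats matroids $M(L)$ realisable over $\RR$, which the statement does not assume. When $L$ is real the argument does go through, but the mechanism is not a semicontinuity inequality: it is that any nonempty Zariski-open subset of $\CC^{n+1}$ meets $\RR^{n+1}_{>0}$, so a generic real positive $u$ is already Zariski-generic and Varchenko's count of non-degenerate critical points directly equals the generic complex count. For genuinely complex $L$ you are back to the compactification argument, so the detour does not shorten the proof.
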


In fact, the above theorem has a much more general version \cite[Theorem 1]{Huh2013}. One can replace $\ell_i(x)$ by any polynomials so that $X = \mathbb{C}^d \setminus V(\ell_0(x) \ell_1(x) \cdots \ell_n(x))$ is a smooth very affine variety, and the statement still holds. Our focus remains on affine-linear functions $\ell_i(x)$. Theorem \ref{thm:orlik-terao} was conjectured by Varchenko, who proved the following for real arrangements, see \cite[Theorem 1.2.1]{varchenko1995critical}.

\begin{theorem} \label{thm:varchenko}
    If the matrix $L \in \mathbb{R}^{(d+1) \times (n+1)}$ is real, ${\cal A}$ is essential and $u \in \mathbb{R}^{n+1}_+$ is a tuple of positive numbers, then all solutions of the scattering equations~\eqref{eq:scatteringeqs} are real. Moreover, there is precisely one solution contained in each of the bounded chambers of the hyperplane arrangement complement $\mathbb{R}^d \setminus {\cal A}$.
\end{theorem}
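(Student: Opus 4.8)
The plan is to establish the sharper ``one solution per bounded chamber'' assertion directly by a concavity argument, and then invoke Theorem~\ref{thm:orlik-terao} to rule out any further complex solutions. First I would fix a bounded chamber $C$ of $\RR^d \setminus {\cal A}$, that is, a bounded connected component of the complement. On $C$ each $\ell_i$ has constant sign, so
\[ f(x) \, = \, \sum_{i=0}^n u_i \log |\ell_i(x)| \]
is a well-defined real-analytic function, and its partial derivatives are exactly the rational functions $\partial {\cal L}_u / \partial x_j = \sum_i u_i a_{ij}/\ell_i(x)$ appearing in~\eqref{eq:scatteringeqs}; hence the critical points of $f$ in $C$ are precisely the solutions of the scattering equations lying in $C$. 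A short computation gives
\[ \mathrm{Hess}\, f(x) \, = \, - \sum_{i=0}^n \frac{u_i}{\ell_i(x)^2}\, a_i\, a_i^\top, \]
where $a_i \in \RR^d$ is the $i$-th row of $A$, i.e.\ the gradient of $\ell_i$. Since ${\cal A}$ is essential the vectors $a_0, \ldots, a_n$ span $\RR^d$, and since $u \in \RR^{n+1}_+$ every coefficient $u_i/\ell_i(x)^2$ is positive, so $\mathrm{Hess}\, f(x)$ is negative definite; thus $f$ is strictly concave on the convex open set $C$.

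Second, I would prove the properness statement: $f$ is bounded above on $C$ and tends to $-\infty$ at $\partial C$. Boundedness above is clear because $\overline{C}$ is compact --- here boundedness of $C$ is used --- so each $|\ell_j|$, hence each $u_j \log|\ell_j|$, is bounded above on $C$. If $x \to x^* \in \partial C$ then $x^* \in {\cal A}$, so some $\ell_i(x^*) = 0$ and the term $u_i \log|\ell_i(x)| \to -\infty$ while the remaining terms stay bounded above; hence $f(x) \to -\infty$. Therefore $f$ attains its supremum at an interior point $x_0 \in C$, which is a critical point of $f$, and by strict concavity $x_0$ is the only critical point of $f$ in $C$. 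This produces exactly one real solution of~\eqref{eq:scatteringeqs} in each bounded chamber.

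Finally I would count. By Zaslavsky's theorem the number of bounded chambers of an essential real arrangement equals $(-1)^d \chi(X)$, which by Theorem~\ref{thm:orlik-terao} is the total number of complex solutions of~\eqref{eq:scatteringeqs}. The previous step exhibits that many distinct real solutions, one per bounded chamber, so these account for all solutions: every solution is real and none lies in an unbounded chamber. The step I expect to need the most care is the properness argument combined with the use of essentiality --- one must exploit compactness of $\overline{C}$ to force $f \to -\infty$ on the \emph{entire} topological boundary of $C$, and essentiality is genuinely needed both to make $\mathrm{Hess}\, f$ negative definite (not merely negative semidefinite along a lineality space) and, via Zaslavsky, to match the bounded-chamber count with $(-1)^d \chi(X)$.
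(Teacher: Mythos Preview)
The paper does not supply a proof of Theorem~\ref{thm:varchenko}; it is quoted from \cite[Theorem~1.2.1]{varchenko1995critical}. Your argument---strict concavity of $\sum_i u_i\log|\ell_i|$ on each bounded (hence convex, relatively compact) chamber, properness via compactness of the closure, and then matching the bounded-chamber count with $(-1)^d\chi(X)$ via Zaslavsky and Theorem~\ref{thm:orlik-terao}---is the standard route and is essentially correct.

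The step that actually needs the most care is not the properness argument you flagged but your invocation of Theorem~\ref{thm:orlik-terao}: that theorem gives the solution count only for \emph{generic} $u\in\CC^{n+1}$, whereas here $u$ is an arbitrary point of $\RR^{n+1}_+$, which could a priori lie on the discriminant. You already have the ingredient to close this gap. Your Hessian computation shows that each of the $N:=(-1)^d\chi(X)$ real critical points is nondegenerate, so by the implicit function theorem they persist as $N$ distinct critical points for every $u'$ in a neighbourhood of $u$. For generic such $u'$ these exhaust the critical points by Theorem~\ref{thm:orlik-terao}. If the original $u$ carried an additional isolated critical point, its positive local multiplicity would force an extra solution for nearby $u'$; a positive-dimensional critical component over $u$ would do the same. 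Either way one contradicts the generic count. With this short deformation remark your proof is complete.
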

\begin{comment}
        All solutions of the scattering equations~\eqref{eq:scatteringeqs} are real, and contained in the union of all bounded chambers of $\mathbb{R}^d \setminus {\cal A}$. There is precisely one solution contained in each of these bounded chambers.
\end{comment}
In particular, in the case of real arrangements, the number of bounded chambers of $\mathbb{R}^d \setminus {\cal A}$ counts the signed Euler characteristic $(-1)^d \cdot \chi(X)$ of $X = \mathbb{C}^d \setminus {\cal A}$. 

In our paper, we adopt the geometric point of view that the map  $$\phi \, : \,  x \longmapsto (\ell_0^{-1}(x): \cdots: \ell_n^{-1}(x))$$ sends the solutions of the scattering equations \eqref{eq:scatteringeqs} bijectively to the points in $\mathbb{L}_u \cap {\rm im} \, \phi$. Here $\mathbb{L}_u \subset \mathbb{P}^n$ is the linear space defined by $A^\top \, {\rm diag}(u) \, y = 0$. %Let
%\[\overline{{\rm im} \, \phi} =   \mathcal{R}_L \text{ and } \CC[\mathcal{R}_L] = \CC[s \cdot l_0^{-1}(x),\ldots, s \cdot l_n^{-1}(x)] \subset \CC(s,x_1,\ldots,x_d).\]
The closure of the image of $\phi$ in $\mathbb{P}^n$ is denoted by ${\cal R}_L$. If $L$ has rank $d+1$, then ${\cal R}_L$ is an irreducible $d$-dimensional variety called a \emph{reciprocal linear space}. 
Trivially, $\mathbb{L}_u \cap {\rm im} \, \phi$ is contained in the intersection of closed subvarieties~$\mathbb{L}_u \cap   {\cal R}_L$. %The vanishing ideal of $  {\cal R}_L$ we denote simply by $I({\cal R}_L)$.

\begin{example} \label{ex:introcontd}
    The real points of the arrangement ${\cal A}$ from Example \ref{ex:intro} are shown in Figure \ref{fig:4lines} (left). The complement $\mathbb{R}^2 \setminus {\cal A}$ has eleven
    %ten \textcolor{red}{(11?)} 
    connected components, three of which are bounded. By Theorem \ref{thm:varchenko}, for positive values of~$u \in \mathbb{R}^4_+$, the scattering equations \eqref{eq:scatteringeqs} have three real solutions. There is one solution in each of the triangles in Figure \ref{fig:4lines} (left), and one in the quadrilateral. The signed Euler characteristic of $X = \mathbb{C}^2 \setminus {\cal A}$ is three by Theorem \ref{thm:orlik-terao}, and we have seen in Example \ref{ex:intro} that this equals the degree of the reciprocal linear space ${\cal R}_L$ associated to $L$ from \eqref{eq:Lintro}. The cubic surface $\mathcal{R}_L$ is plotted in the right part of Figure \ref{fig:4lines}, together with the line $\mathbb{L}_u$ for $u = (1,1,1,1)$. The map $\phi$ sends the three solutions of scattering equations \eqref{eq:scatteringeqs} to the three points in $\mathbb{L}_u \cap {\cal R}_L$. 
    %The three ``sheets'' of the surface ${\cal R}_L$ are the images of the two triangles and the quadrilateral under $\phi$.
\end{example}
    \begin{figure}
        \centering
        \includegraphics[width = 0.4 \linewidth]{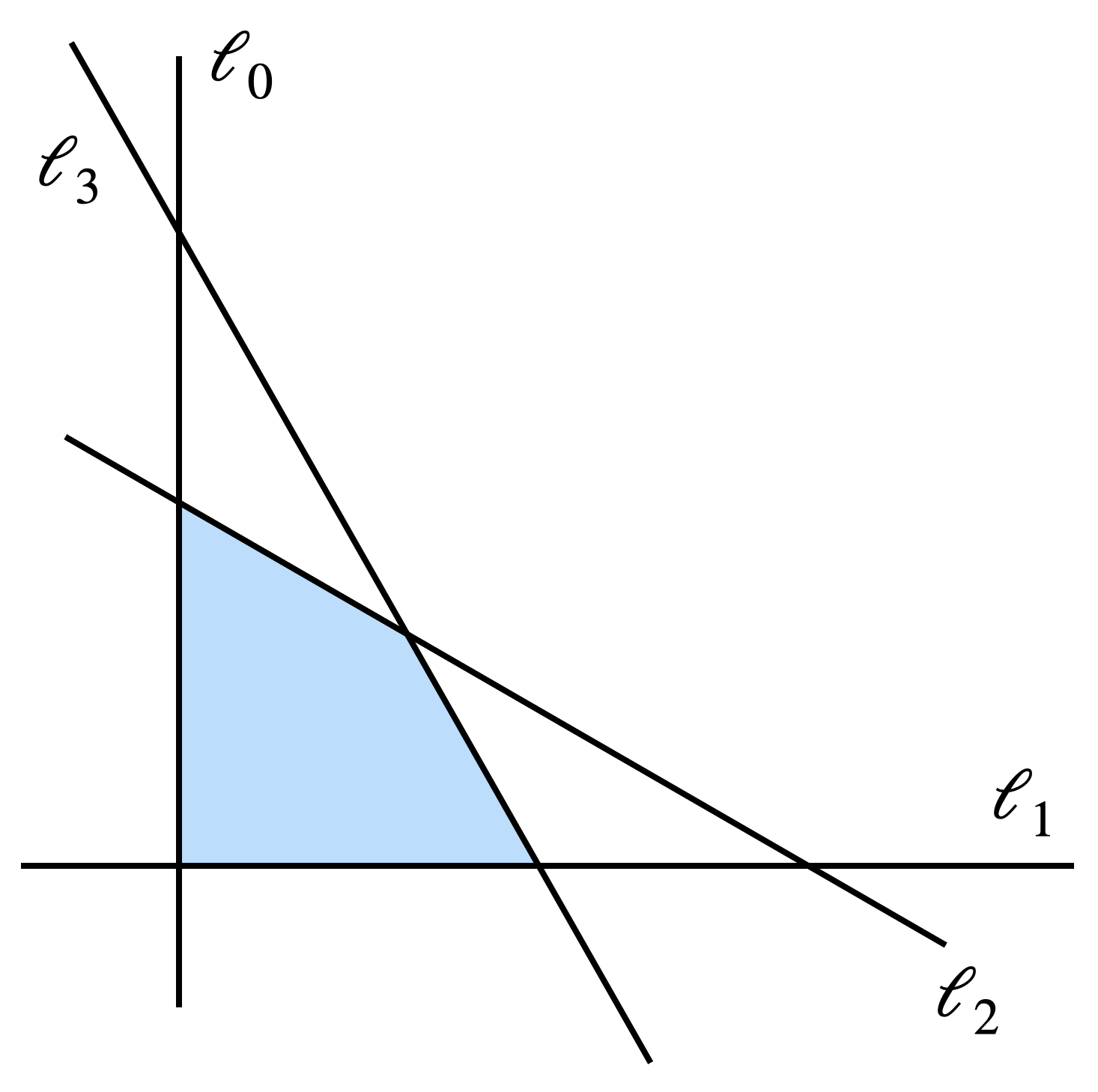}
        \quad 
        \includegraphics[width = 0.5\linewidth]{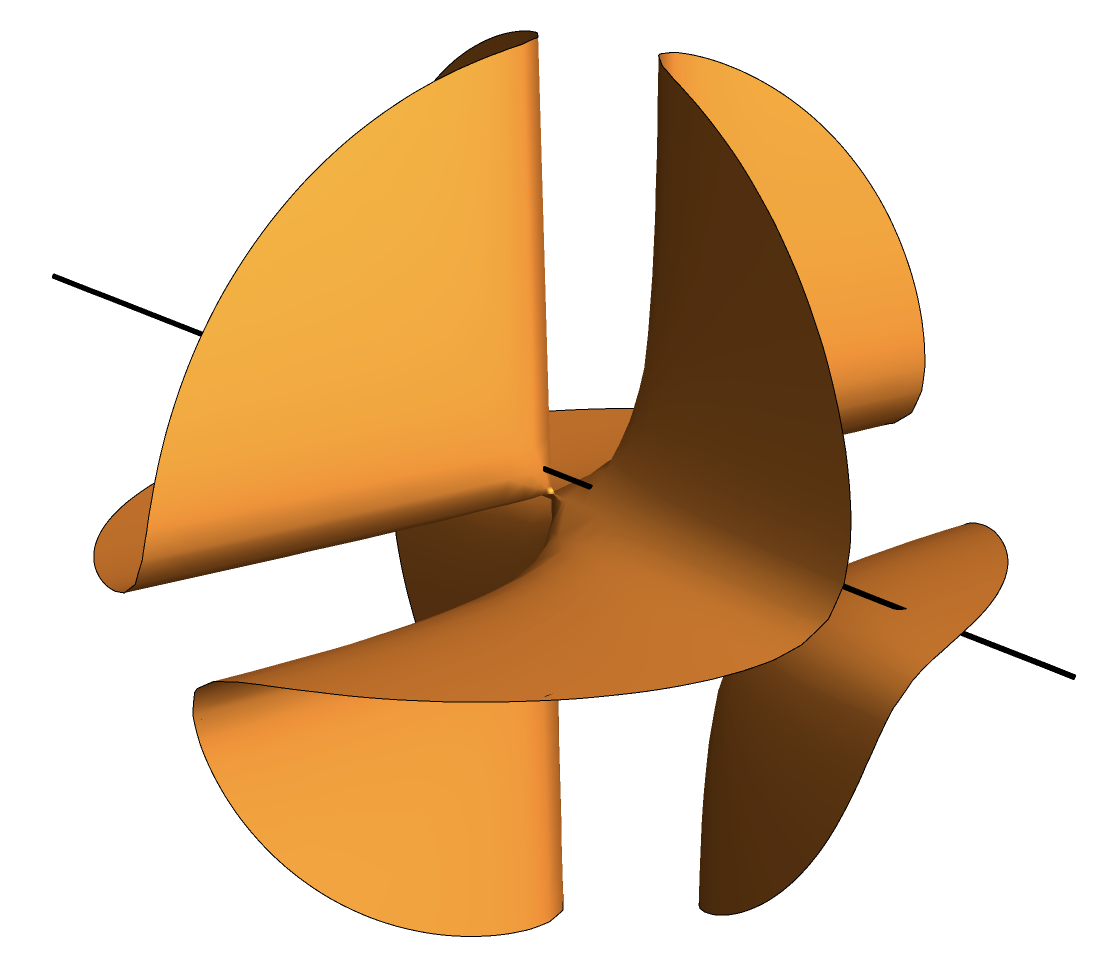}
        \caption{An arrangement of four lines in $\mathbb{R}^2$ whose scattering equations have three solutions and whose reciprocal linear space is a cubic surface in $\mathbb{P}^3$.}
        \label{fig:4lines}
    \end{figure}
The matrix $L$ defines a matroid~$M(L)$ on the ground set $\{0,\ldots,n\}$ whose dependent sets index linearly dependent columns of $L$. By the results of \cite{Proudfoot2006}, the degree of ${\cal R}_L$ only depends on the matroid $M(L)$. If $M(L)$ is uniform, like in Example \ref{ex:introcontd}, then $\deg {\cal R}_L$ is given by the binomial coefficient $\binom{n}{d}$.

The equality $\deg {\cal R}_L = (-1)^d \cdot \chi(X)$ holds in Example \ref{ex:introcontd}, but might fail in general. Theorem \ref{thm:orlik-terao} and injectivity of $\phi: X \rightarrow \mathbb{P}^n$ imply an inequality. 

\begin{proposition} \label{prop:ineq}
    If $L$ has rank $d+1$, then we have $(-1)^d \cdot \chi(X) \leq \deg {\cal R}_L$.
\end{proposition}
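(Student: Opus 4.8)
The plan is to exploit the geometric picture already set up in the excerpt: the map $\phi: X \to \mathbb{P}^n$ is injective, its image closure is ${\cal R}_L$, and the solutions of the scattering equations correspond bijectively to the points of $\mathbb{L}_u \cap \operatorname{im}\phi$. First I would fix a generic $u \in \mathbb{C}^{n+1}$, so that Theorem~\ref{thm:orlik-terao} applies: the scattering equations have exactly $(-1)^d\cdot \chi(X)$ isolated, non-degenerate solutions, and by injectivity of $\phi$ these map to exactly $(-1)^d\cdot\chi(X)$ distinct points of $\mathbb{L}_u \cap \operatorname{im}\phi \subseteq \mathbb{L}_u \cap {\cal R}_L$. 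So the left-hand side of the claimed inequality counts a subset of the points of the intersection $\mathbb{L}_u \cap {\cal R}_L$.

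The second step is to bound $\#(\mathbb{L}_u \cap {\cal R}_L)$ above by $\deg {\cal R}_L$. Since $L$ has rank $d+1$, ${\cal R}_L$ is irreducible of dimension $d$, and $\mathbb{L}_u = \{y : A^\top \operatorname{diag}(u)\, y = 0\}$ is the intersection of $d$ hyperplanes in $\mathbb{P}^n$, hence a linear subspace of dimension at least $n-d$. For generic $u$ this $\mathbb{L}_u$ has dimension exactly $n-d$, complementary to $\dim{\cal R}_L$, so either the intersection is positive-dimensional or it is a finite set of at most $\deg{\cal R}_L$ points (Bézout-type bound for a variety meeting a linear space of complementary dimension). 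To rule out a positive-dimensional intersection, I would argue that $\mathbb{L}_u \cap {\cal R}_L$ being finite is an open condition in $u$ that is nonempty: indeed, by Theorem~\ref{thm:orlik-terao} for generic $u$ the scattering equations have only isolated solutions, so $\mathbb{L}_u \cap \operatorname{im}\phi$ is finite; combined with properness/closedness arguments for the boundary ${\cal R}_L \setminus \operatorname{im}\phi$ one gets that $\mathbb{L}_u \cap {\cal R}_L$ is finite for generic $u$. Then $\#(\mathbb{L}_u \cap {\cal R}_L) \le \deg {\cal R}_L$.

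Chaining the two steps gives
\[
(-1)^d\cdot \chi(X) \;=\; \#\big(\mathbb{L}_u \cap \operatorname{im}\phi\big) \;\le\; \#\big(\mathbb{L}_u \cap {\cal R}_L\big) \;\le\; \deg {\cal R}_L,
\]
which is the assertion.

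The main obstacle I anticipate is the finiteness of $\mathbb{L}_u \cap {\cal R}_L$ for generic $u$ — i.e., ruling out that the extra intersection on the boundary ${\cal R}_L \setminus \operatorname{im}\phi$ could be positive-dimensional. One clean way around this is to avoid counting the set-theoretic intersection directly and instead invoke that, for a linear subspace of complementary dimension in general position with respect to an irreducible projective variety $Y$, the scheme-theoretic intersection is zero-dimensional of length $\deg Y$; since $\mathbb{L}_u$ varies in the $d(n{-}d)$-dimensional family of $(n{-}d)$-planes and ${\cal R}_L$ is fixed, generic position holds for generic $u$ provided the family $\{\mathbb{L}_u\}_u$ is not contained in a Schubert-type degeneracy locus. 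Checking that this family is sufficiently generic — it is cut out by the special equations $A^\top\operatorname{diag}(u)y=0$, not an arbitrary pencil of planes — is the one place where a short dimension-count argument is genuinely needed; the inequality itself is then immediate.
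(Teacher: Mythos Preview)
Your approach is exactly the paper's: the proof there is the single sentence ``Theorem~\ref{thm:orlik-terao} and injectivity of $\phi: X \rightarrow \mathbb{P}^n$ imply an inequality.'' One remark: the worry about positive-dimensional boundary intersection is a detour you can avoid, since the non-degeneracy clause of Theorem~\ref{thm:orlik-terao} already makes each of the $(-1)^d\chi(X)$ image points an isolated (indeed simple) point of $\mathbb{L}_u \cap {\cal R}_L$, and the number of isolated points in a linear section of complementary dimension never exceeds the degree, regardless of what else the intersection contains.
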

Example \ref{ex:boundarysolution} below shows that the inequality can be strict. Before stating it, we recall a result by Proudfoot and Speyer~\cite{Proudfoot2006} on the defining equations of ${\cal R}_L$. The \emph{circuits} of $M(L)$ are the minimal dependent sets. For each circuit $C \subset \{0, \ldots, n\}$, there is a unique linear relation between the functions $\ell_0, \ldots, \ell_n$, given by $\sum_{i \in C} \alpha_{C,i} \, \ell_i(x) \, = \, 0$.
The coefficient vectors $(\alpha_{C,i})_{i \in C}$ are defined up to scaling. We use these vectors to define one polynomial for each circuit: 
\[ f_C \, = \, \sum_{i \in C} \alpha_{C,i} \, \prod_{j \in C \setminus \{ i \}} \, y_j .\]
The following theorem is crucial in much of what follows, see \cite[Theorem 4]{Proudfoot2006}.
\begin{theorem} \label{thm:proudfoot-speyer}
    The polynomials $\{f_C \, : \, C \text{ is a circuit of $M(L)$} \}$ form a universal Gr\"obner basis for the vanishing ideal $I({\cal R}_L)$ of the reciprocal linear space ${\cal R}_L$.
\end{theorem}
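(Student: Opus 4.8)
The plan is to fix an arbitrary term order, identify the leading terms of the $f_C$ combinatorially, and then reduce the universal Gr\"obner basis statement to a single Hilbert function comparison between $\mathcal{R}_L$ and a broken circuit complex.

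First I would record the easy containment $\langle f_C : C \text{ a circuit of } M(L)\rangle \subseteq I(\mathcal{R}_L)$: on ${\rm im}\,\phi$ one has $y_i = \ell_i(x)^{-1}$, so the circuit relation $\sum_{i \in C}\alpha_{C,i}\,\ell_i(x) = 0$ gives $f_C = \big(\prod_{j \in C}\ell_j(x)^{-1}\big)\sum_{i \in C}\alpha_{C,i}\,\ell_i(x) = 0$ on ${\rm im}\,\phi$, and hence $f_C$ vanishes on its Zariski closure $\mathcal{R}_L$. Next, fix a term order $<$ on $\mathbb{C}[y_0,\ldots,y_n]$ and let $\prec$ be the linear order it induces on $\{0,\ldots,n\}$ via $i \prec j \iff y_i < y_j$. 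The monomials $\big(\prod_{k \in C}y_k\big)/y_i$ for $i \in C$ are pairwise distinct, so $f_C$ has a unique leading monomial, obtained by deleting the $\prec$-smallest variable: $\init_<(f_C) = \prod_{k \in C,\, k \neq \min_\prec C} y_k$, the squarefree monomial supported on the broken circuit $C \setminus \{\min_\prec C\}$. As $C$ ranges over all circuits, these monomials generate the Stanley-Reisner ideal $J_\prec$ of the broken circuit complex $\mathrm{BC}_\prec(M(L))$, so $J_\prec \subseteq \init_<\!\big(\langle f_C\rangle\big) \subseteq \init_<\!\big(I(\mathcal{R}_L)\big)$ for every $<$.

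The next step is to observe that both Hilbert functions in play are rigid. The function $q \mapsto \dim_{\mathbb{C}}(\mathbb{C}[y]/J_\prec)_q$ is determined by the $f$-vector of $\mathrm{BC}_\prec(M(L))$, which is a matroid invariant independent of the chosen order (Whitney's theorem); moreover $\mathrm{BC}_\prec(M(L))$ is pure of dimension $d$ (its facets are the bases of $M(L)$ containing no broken circuit, all of size $\operatorname{rank} M(L) = d+1$) and shellable, so $\mathbb{C}[y]/J_\prec$ is Cohen-Macaulay of Krull dimension $d+1$. On the other side, $q \mapsto \dim_{\mathbb{C}}(\mathbb{C}[y]/\init_<(I(\mathcal{R}_L)))_q = \dim_{\mathbb{C}} \mathbb{C}[\mathcal{R}_L]_q$ does not depend on $<$. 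It therefore suffices to find a single term order $\prec_0$ for which $\mathbb{C}[y]/J_{\prec_0}$ and $\mathbb{C}[\mathcal{R}_L]$ have the same Hilbert function: granting this, for every $<$ and every $q$ the inclusion $(J_\prec)_q \subseteq \init_<(I(\mathcal{R}_L))_q$ becomes an equality of vector spaces, whence $\init_<(I(\mathcal{R}_L)) = J_\prec$ for every term order. This shows at once that $\{f_C\}$ is a Gr\"obner basis with respect to every term order, i.e. a universal Gr\"obner basis, and---comparing leading ideals in any one order---that $\langle f_C\rangle = I(\mathcal{R}_L)$.

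The hard part is exactly this last Hilbert function identity, which is where the geometry of the reciprocal linear space enters. I would establish it by computing the Hilbert series of $\mathbb{C}[\mathcal{R}_L]$ through the stratification of $\mathcal{R}_L$ whose strata are (open subsets of) reciprocal linear spaces of minors of $M(L)$: the associated graded short exact sequences reduce the computation to a deletion-contraction recursion, which is the same recursion that governs the characteristic polynomial of $M(L)$ and hence the Hilbert series of the broken circuit complex; since the base cases agree, the two Hilbert series coincide. (As a byproduct one re-derives that $\mathcal{R}_L$ is arithmetically Cohen-Macaulay.) This matroid-theoretic Hilbert series computation is the real content of the theorem; the rest of the argument is formal.
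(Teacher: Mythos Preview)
The paper does not give its own proof of this theorem; it is quoted verbatim from \cite[Theorem~4]{Proudfoot2006} as background, so there is no in-paper argument to compare against.

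Your outline is the standard Proudfoot--Speyer strategy and is logically sound: the containment $\langle f_C\rangle\subseteq I(\mathcal{R}_L)$, the identification $\init_<(f_C)=\prod_{k\in C\setminus\{\min_\prec C\}}y_k$, the chain $J_\prec\subseteq\init_<(\langle f_C\rangle)\subseteq\init_<(I(\mathcal{R}_L))$, and the reduction to a single Hilbert-function identity (using that the $f$-vector of $\mathrm{BC}_\prec(M(L))$ is order-independent) are all correct. The only place your write-up is thin is the last paragraph: ``deletion--contraction on the stratification'' is the right slogan, but as written it is a plan rather than a proof. To make it go through you would want to exhibit, for a non-loop non-coloop element $e$, a short exact sequence of graded modules relating $\mathbb{C}[\mathcal{R}_L]$ to the coordinate rings attached to $M(L)\!\setminus\! e$ and $M(L)/e$ (e.g.\ via multiplication by $y_e$ and restriction to $\{y_e=0\}$), check that the same recursion holds for the Stanley--Reisner rings of the broken circuit complexes, and verify the base cases. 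Proudfoot and Speyer organize this slightly differently---they first argue that $\mathbb{C}[y]/\langle f_C\rangle$ is Cohen--Macaulay (because its initial ideal is the Stanley--Reisner ideal of a shellable complex), hence unmixed over the irreducible variety $\mathcal{R}_L$, and then conclude radicality/primeness by a generic-reducedness check---but your Hilbert-series route reaches the same conclusion and is a perfectly acceptable alternative.
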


\begin{example} \label{ex:boundarysolution}
    The inequality in Proposition \ref{prop:ineq} can be strict.  The variety ${\cal R}_L$~for
    \[ L  \, = \, \begin{pmatrix}
        1 & 0 & 0 & 0 \\ 
        1 & 1 & 0 & 1 \\ 
        0 & 0 & 1 & 1
    \end{pmatrix}\]
    is a quadratic surface in~$\mathbb{P}^3$. Its defining equation is found via Theorem \ref{thm:proudfoot-speyer}: $ y_2y_3 + y_1y_3-y_1y_2 =   0$.
    Indeed, $M(L)$ has a unique circuit $\{1,2,3\}$  consisting of the last three columns. The complement of the arrangement ${\cal A}$ in $\mathbb{R}^2$ has one bounded box. By Theorem \ref{thm:varchenko}, the Euler characteristic of $X = \mathbb{C}^2 \setminus {\cal A}$ is one.
\end{example}
We conclude the section with a note on algebraic statistics. The probability simplex of dimension $n$ is the set $\Delta_n = \{ p \in \mathbb{R}^{n+1}_+ \, : \, \sum_{i = 0}^n p_i = 1 \}$. This contains all probability distributions for a discrete random variable with $n+1$ states. A \emph{linear model} is the intersection of $\Delta_n$ with an affine subspace in $\mathbb{R}^{n+1}$. That affine-linear space is parametrized by affine-linear functions $\ell_i$: $x \mapsto (\ell_0(x), \ldots, \ell_n(x))$. Since $(\ell_0, \ldots, \ell_n)$ represents a probability distribution, we impose $\sum_{i=0}^n \ell_i(x) = 1$. A central problem in statistics is the following:
\begin{center}
    \emph{Let ${\cal M} \subset \Delta_n$ be a statistical model. Suppose that, in an experiment, state $i$ of our random variable is observed a total number of $u_i$ times. What is the distribution in ${\cal M}$ that best explains the data $u = (u_0, \ldots, u_n)$?}
\end{center}
The answer from \emph{maximum likelihood inference} (MLI) is to maximize the log-likelihood function. The \emph{maximum likelihood estimate} (MLE) is the maximizer on ${\cal M}$. In our context, the log-likelihood function is the scattering potential ${\cal L}_u$ from \eqref{eq:loglikelihood_intro}. The MLE is among its complex critical points. The generic number of complex critical points, called the maximum likelihood degree (ML degree) of ${\cal M}$, governs the algebraic complexity of MLI. In our context, the ML degree is $(-1)^d \cdot \chi(X)$ by Theorem \ref{thm:orlik-terao}, where $X = \mathbb{C}^d \setminus {\cal A}$. The connection with particle scattering was explored in \cite{sturmfels2021likelihood}. For more on linear models, see \cite[Section 1]{HuhSturmfels2014}.

\begin{example} After scaling $\ell_0,\ell_1,\ell_2,\ell_3$ from Example \ref{ex:intro} by $\frac{3}{4},\frac{3}{4},\frac{1}{4}, \frac{1}{4}$ respectively, they sum to one.
    %\[L \, = \, \begin{pmatrix}
      %  0 & 0 & \frac{1}{2} & \frac{1}{2} \\ \frac{3}{4} & 0 & \frac{-1}{4} & \frac{-1}{2} \\ 0 & \frac{3}{4} & \frac{-1}{2} & \frac{-1}{4}
    %\end{pmatrix}.\]
    This scaling changes neither the arrangement ${\cal A}$ nor the scattering equations. Our linear statistical model is the intersection of the 3-dimensional probability simplex with the affine-linear space parametrized by $(\ell_0(x), \ell_1(x), \ell_2(x), \ell_3(x))$. This is the quadrilateral shaded in blue in  Figure \ref{fig:4lines}. It is defined by the inequalities $\ell_i \geq 0 \text{ for } i = 0, \ldots, 3$. The ML degree is three, and for $u \in \mathbb{R}^4_+$, the MLE is the unique critical point of ${\cal L}_u$ contained in the blue region (Example \ref{ex:introcontd}). 
\end{example}

\section{Reciprocal versus ML degree} \label{sec:3}
We investigate when the \emph{reciprocal degree} $\deg {\cal R}_L$ differs from the ML degree $(-1)^d \cdot \chi(X)$. Our starting point is the stratification of ${\cal R}_L$ established in \cite[Proposition 5]{Proudfoot2006} and recalled below in Proposition \ref{prop:stratification}. For any subset $I \subseteq \{0, \ldots, n \}$, we define submatrices $L_I$ and  $A_I^\top$ of $L$ and $A^\top$ respectively. They consist of the columns indexed by $I$. We denote the torus orbits of $\mathbb{P}^n$ by $U_I  = \{ x \in \mathbb{P}^n \,: \, x_i \neq 0 \Leftrightarrow i \in I \}$. The submatrix $L_I$ parametrizes a reciprocal linear space ${\cal R}_{L_I} \subset \mathbb{P}^{|I|-1}$ of dimension ${\rm rank}(L_I)-1$. With a slight abuse of notation, we also write ${\cal R}_{L_I}$ for the image of ${\cal R}_{L_I}$ under the inclusion $\mathbb{P}^{|I|-1} \hookrightarrow \mathbb{P}^n$ which identifies $\mathbb{P}^{|I|-1}$ with $\{ x \in \mathbb{P}^n\,:\, x_i = 0, i \notin I\}$. Finally, we write ${\cal R}_{L_I}^\circ = {\cal R}_{L_I} \cap U_I$. 

\begin{proposition} \label{prop:stratification}
    If $I \subseteq \{0, \ldots, n \}$ is not a flat of $M(L)$, then ${\cal R}_L \cap U_I = \emptyset$. If $I$ is a flat of $M(L)$, then ${\cal R}_L \cap U_I = {\cal R}_{L_I}^\circ$. 
\end{proposition}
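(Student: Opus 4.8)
The plan is to analyze the intersection $\mathcal{R}_L \cap Z_I$, where $Z_I = \{y \in \mathbb{P}^n : y_j = 0 \text{ for all } j \notin I\}$ is the coordinate subspace spanned by the indices in $I$, and then to intersect with $U_I$. The elementary observation that drives everything is the inclusion $U_I \subseteq Z_I$, which yields $\mathcal{R}_L \cap U_I = (\mathcal{R}_L \cap Z_I) \cap U_I$; so it suffices to identify $\mathcal{R}_L \cap Z_I$ inside $Z_I \cong \mathbb{P}^{|I|-1}$ and then restrict to the big torus.

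To compute $\mathcal{R}_L \cap Z_I$ I would invoke Theorem \ref{thm:proudfoot-speyer}: the circuit polynomials $f_C$ generate $I(\mathcal{R}_L)$, so I only need their images modulo $\langle y_j : j \notin I \rangle$. For a circuit $C$ of $M(L)$ there are three cases according to $|C \setminus I|$. If $C \subseteq I$, then $f_C$ is unchanged. If $|C \setminus I| \geq 2$, then for every $i \in C$ the monomial $\prod_{k \in C\setminus\{i\}} y_k$ still contains a variable $y_k$ with $k \notin I$, so $f_C$ restricts to $0$. If $C \setminus I = \{j\}$ is a singleton, then all terms except the one indexed by $i = j$ vanish, and $f_C$ restricts to the single monomial $\alpha_{C,j}\prod_{k \in C\setminus\{j\}} y_k$, which is nonzero since $\alpha_{C,j} \neq 0$ and $M(L)$ is loopless. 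Now the circuits of $M(L)$ contained in $I$ are precisely the circuits of the restriction $M(L_I) = M(L)|_I$, and for such $C$ the linear relation $\sum_{i\in C}\alpha_{C,i}\ell_i = 0$, hence $f_C$, is the same whether computed from $L$ or from $L_I$; so by Theorem \ref{thm:proudfoot-speyer} applied to $L_I$, the family $\{f_C : C \subseteq I\}$ generates $I(\mathcal{R}_{L_I})$. Consequently $\mathcal{R}_L \cap Z_I$ is the closed subset of $Z_I \cong \mathbb{P}^{|I|-1}$ defined by $I(\mathcal{R}_{L_I})$ together with one monomial $\prod_{k\in C\setminus\{j\}} y_k$ for each circuit $C$ of $M(L)$ with $|C\setminus I| = 1$.

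The combinatorial crux is then a dichotomy according to whether $I$ is a flat. If $I$ is a flat, I claim there is no circuit $C$ with $|C\setminus I| = 1$: if $C\setminus I = \{j\}$, then $C\setminus\{j\}\subseteq I$ is independent while $C = (C\setminus\{j\})\cup\{j\}$ is dependent, so $j$ lies in the matroid closure of $C\setminus\{j\}$, hence in the closure of $I$, which equals $I$ — contradicting $j \notin I$. Therefore $\mathcal{R}_L \cap Z_I$ is exactly the embedded copy of $\mathcal{R}_{L_I}$, and intersecting with $U_I$ gives $\mathcal{R}_{L_I}\cap U_I = \mathcal{R}_{L_I}^\circ$, as claimed. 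If $I$ is not a flat, choose $j$ in the closure of $I$ with $j \notin I$ and a maximal independent subset $B\subseteq I$; then $B\cup\{j\}$ is dependent and contains a circuit $C$, which must contain $j$ (as $B$ is independent), hence satisfies $C\setminus I = \{j\}$ and $C\setminus\{j\}\subseteq B$, with $C \setminus \{j\} \neq \emptyset$ by looplessness. The monomial $\prod_{k\in C\setminus\{j\}}y_k$ vanishes on $\mathcal{R}_L\cap Z_I$, so every point of $\mathcal{R}_L\cap Z_I$ has a vanishing coordinate $y_k$ with $k\in C\setminus\{j\}\subseteq I$; such a point is not in $U_I$, whence $\mathcal{R}_L\cap U_I = \emptyset$.

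The main point requiring care — rather than a genuine difficulty — is the bookkeeping in the second paragraph: confirming that restricting the universal Gröbner basis to $Z_I$ recovers precisely a generating set of $I(\mathcal{R}_{L_I})$ (plus monomials in the non-flat case), which rests on Theorem \ref{thm:proudfoot-speyer} for the submatrix $L_I$, on the standard identification of the circuits of a matroid restriction with the circuits contained in the restricting set, and on the fact that the circuit relations depend only on the columns involved. The looplessness of $M(L)$ — equivalently $\ell_i \not\equiv 0$ for all $i$, part of the standing hypotheses — is used only to ensure $C\setminus\{j\}\neq\emptyset$, and costs nothing since every loop belongs to every flat.
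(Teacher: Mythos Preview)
Your proof is correct. The paper does not actually prove Proposition~\ref{prop:stratification}; it merely recalls the statement and cites \cite[Proposition~5]{Proudfoot2006}. So there is no in-paper argument to compare against.

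Your approach---restricting the universal Gr\"obner basis of Theorem~\ref{thm:proudfoot-speyer} to the coordinate subspace $Z_I$, analyzing the three cases for $|C\setminus I|$, and then invoking the matroid-closure characterization of flats---is exactly the natural way to recover the result from the tools already assembled in the paper, and it is essentially the argument one finds in \cite{Proudfoot2006} as well. The case split is clean, the identification of the circuits of $M(L_I)$ with the circuits of $M(L)$ contained in $I$ is standard, and your handling of the flat/non-flat dichotomy via closures is correct. The remark about looplessness is accurate but, as you note, inessential: if a loop $j$ lay outside $I$ then $I$ would automatically fail to be a flat, and the restricted $f_{\{j\}}$ would be a nonzero constant, forcing $\mathcal{R}_L\cap Z_I=\emptyset$ outright---so the conclusion $\mathcal{R}_L\cap U_I=\emptyset$ would hold even more strongly.
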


Recall that a \emph{flat} of the matroid $M(L)$ is a subset $I$ of $\{ 0, \ldots, n\}$ such that the rank of $I \cup \{i \}$ is strictly greater than ${\rm rank}(I)$ for any $i \in \{0, \ldots, n \} \setminus I$. The entire ground set $\{0, \ldots, n \}$ is a flat by convention. As a consequence of Proposition~\ref{prop:stratification}, we have a disjoint union
\begin{equation} \label{eq:sqcup} \mathbb{L}_u \cap {\cal R}_L \, = \, \bigsqcup_{\substack{I \subseteq \{0, \ldots, n \} \\ I \text{ is a flat of $M(L)$}}}  \mathbb{L}_u \cap {\cal R}_{L_I}^\circ .\end{equation}
\begin{remark}
 Notice that the image of the map $\phi$ from the Introduction is contained in the dense stratum: ${\rm im} \, \phi \subset {\cal R}_L^\circ = {\cal R}_{L_{\{0,\ldots, n\}}}^\circ = {\cal R}_L \cap U_{\{0, \ldots, n \}}$. In particular, the solutions to the scattering equations are among the points 
in $\mathbb{L}_u \cap {\cal R}_{L}^\circ$.   
\end{remark}
\begin{lemma} \label{lem:nosolsatinf}
If ${\rm rank}(L) = d+1$ and $u \in \mathbb{C}^{n+1}$ is generic, then the solutions to the scattering equations are in one-to-one correspondence with the intersections of $\mathbb{L}_u$ with the dense stratum of ${\cal R}_L$. In symbols, we have $\mathbb{L}_u \cap {\rm im} \, \phi = \mathbb{L}_u \cap {\cal R}_L^\circ$. 
\end{lemma}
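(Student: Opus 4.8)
The plan is to isolate the genuine content of the statement --- the \emph{set} equality $\mathbb{L}_u \cap {\rm im}\,\phi = \mathbb{L}_u \cap {\cal R}_L^\circ$ for generic $u$ --- and reduce it to a dimension count on an incidence variety. First I would set up the reduction. Recall from above that ${\rm im}\,\phi \subset {\cal R}_L^\circ$ and that ${\cal R}_L$ is, by definition, the closure of ${\rm im}\,\phi$, an irreducible constructible subset of the $d$-dimensional irreducible variety ${\cal R}_L$. Hence ${\rm im}\,\phi$ contains a dense open subset of ${\cal R}_L$, so the ``spurious'' locus $S := {\cal R}_L^\circ \setminus {\rm im}\,\phi$ is constructible of dimension at most $d-1$, and every point of $S$ lies in the dense torus $U_{\{0,\ldots,n\}}$. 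Since the discussion preceding the lemma already records that $\phi$ restricts to a bijection from the solution set of the scattering equations onto $\mathbb{L}_u \cap {\rm im}\,\phi$, and since ${\cal R}_L^\circ = {\rm im}\,\phi \sqcup S$, it is enough to show that $\mathbb{L}_u \cap S = \emptyset$ for generic $u$.

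For the dimension count I would argue as follows. Because the $d+1$ columns of $L^\top = \begin{pmatrix} -b & A\end{pmatrix}$ are independent, ${\rm rank}(A^\top) = d$. Given $s \in S$, choose an affine representative $\tilde s \in \mathbb{C}^{n+1}$; all its entries are nonzero since $s$ lies in the dense torus. Then $s \in \mathbb{L}_u$ if and only if $A^\top {\rm diag}(u)\,\tilde s = A^\top {\rm diag}(\tilde s)\, u = 0$, a system of $d$ linear equations in $u$ whose coefficient matrix $A^\top {\rm diag}(\tilde s)$ has rank $d$ (product of a rank-$d$ matrix with an invertible diagonal matrix). Thus $\{\,u : s \in \mathbb{L}_u\,\}$ is a linear subspace of $\mathbb{C}^{n+1}$ of dimension $n+1-d$. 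Now form the incidence variety
\[ \mathcal{Z} \,=\, \{\,(u,s) \in \mathbb{C}^{n+1} \times S \;:\; s \in \mathbb{L}_u\,\}, \]
stratifying $S$ into locally closed subvarieties if needed. Projecting to $S$, every fiber has dimension $n+1-d$, so the fiber-dimension theorem gives $\dim \mathcal{Z} \leq (d-1) + (n+1-d) = n$. Projecting instead to $\mathbb{C}^{n+1}$, we get $\dim \overline{q(\mathcal{Z})} \leq \dim \mathcal{Z} \leq n < n+1$, so $q(\mathcal{Z})$ is not Zariski-dense; for $u$ outside a proper closed subset the fiber $q^{-1}(u)$ is empty, i.e. $\mathbb{L}_u \cap S = \emptyset$. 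Combined with the first paragraph, this proves the lemma.

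The step requiring the most care is the very point the lemma is about: $\mathbb{L}_u$ is \emph{not} a generic $(n-d)$-plane but sweeps out only an $n$-parameter family, namely the ${\rm diag}(u)$-orbit of the fixed plane $\mathbb{P}(\ker A^\top)$, so one cannot simply invoke a generic-linear-section transversality argument; the incidence-variety count is exactly what shows this restricted family still avoids a fixed $(d-1)$-dimensional set for generic $u$. The other ingredient that must be in place is the bound $\dim S \leq d-1$, which rests on ${\cal R}_L$ being irreducible of dimension $d$ and ${\rm im}\,\phi$ being dense in it --- both recalled above for ${\rm rank}(L) = d+1$.
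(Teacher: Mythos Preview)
Your argument is correct. Both inclusions ${\rm im}\,\phi \subset {\cal R}_L^\circ$ and the bound $\dim S \leq d-1$ follow from the density of ${\rm im}\,\phi$ in the irreducible $d$-dimensional variety ${\cal R}_L$, and your incidence-variety count cleanly disposes of $\mathbb{L}_u \cap S$ for generic $u$.

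The paper takes a different, more concrete route. Rather than bounding $\dim S$ abstractly, it \emph{identifies} $S$: one has ${\cal R}_L^\circ \setminus {\rm im}\,\phi = {\cal R}_{A^\top}^\circ$, since the reciprocals of points in ${\cal R}_L^\circ$ are (projectively) the nowhere-vanishing vectors in the row span of $L$, and those missing from ${\rm im}\,\phi$ are exactly the ones lying in the span of the last $d$ rows, i.e.\ the row span of $A^\top$. The intersection $\mathbb{L}_u \cap {\cal R}_{A^\top}^\circ$ is then recognized as the solution set of the scattering equations for the essential \emph{central} arrangement $\tilde{\cal A}$ in $\mathbb{C}^d$ defined by the columns of $A^\top$; since $\chi(\mathbb{C}^d \setminus \tilde{\cal A}) = 0$, Theorem~\ref{thm:orlik-terao} gives $\mathbb{L}_u \cap S = \emptyset$ for generic $u$.

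Your approach is more elementary --- it avoids invoking Orlik--Terao and works verbatim for any constructible $S \subset U_{\{0,\ldots,n\}}$ of dimension at most $d-1$. The paper's approach, in exchange, yields the explicit description $S = {\cal R}_{A^\top}^\circ$, and the same pattern (identifying a stratum with a smaller reciprocal linear space and applying Theorem~\ref{thm:orlik-terao}) is what drives the proof of Theorem~\ref{thm:flats} immediately afterwards. So the paper's proof is less self-contained but better integrated with the surrounding arguments.
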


\begin{proof}
    Observe that ${\cal R}_L^\circ \setminus {\rm im} \, \phi$ equals ${\cal R}_{A^\top}^\circ = {\cal R}_{A^\top} \cap U_{\{0, \ldots, n\}}$. These are the reciprocals of the points in the row span of $L$ which have non-zero coordinates and which lie in the span of the last $d$ rows. The intersections of ${\cal R}_{A^\top}^\circ$ with $\mathbb{L}_u$ are in one-to-one correspondence with the solutions to the scattering equations of~$\tilde{\cal A}$, the central hyperplane arrangement in $\mathbb{C}^d$ given by the columns of $A^{T}$. Since ${\rm rank}(L) = d+1$, $A$ has rank $d$ and ${\tilde{\cal A}}$ is essential. The Euler characteristic of $\mathbb{C}^d \setminus {\tilde{\cal A}}$ is zero, so its scattering equations have no solutions by Theorem~\ref{thm:orlik-terao}.
\end{proof}

\begin{example}
    The matrix $L$ from Example \ref{ex:boundarysolution} gives a quadratic surface ${\cal R}_L$~in~$\mathbb{P}^3$. That surface contains the curve ${\cal R}_{A^\top} = \{ y \in {\cal R}_L \, : \, y_0 = y_1 \}$. The open subset ${\cal R}_{A^\top}^\circ \subset {\cal R}_{A^\top}$ intersects the line $\mathbb{L}_u$ if and only if $u_0 + u_1 + u_2 + u_3 = 0$. 
\end{example}

    The assumption ${\rm rank}(L) = d+1$ can be dropped in Lemma \ref{lem:nosolsatinf}. The central arrangement $\tilde{\cal A}$ from our proof might not be essential in that case, but its scattering equations will have no solutions unless $u_0 + u_1 + \cdots + u_n = 0$:
    \[ \textstyle \sum_{j = 1}^d x_j \, \partial_j {\cal L}_u \, = \, \textstyle \sum_{j = 1}^d x_j \sum_{i=0}^n u_i (\partial_j \ell_i)\ell_i^{-1} \, = \, u_0 + u_1 +  \cdots + u_n, \]
    where $\partial_j = \frac{\partial}{\partial x_j}$. The assumption ${\rm rank}(L) = d+1$ is natural in this section: It only makes sense to compare reciprocal and ML degree when ${\rm rank}(A) = {\rm rank}(L)-1$. Then ${\rm rank}(L) = d+1$, after possibly changing coordinates. 

Next, we identify strata ${\cal R}_{L_I}^\circ$ containing ``excess'' intersection points with~$\mathbb{L}_u$. 
For $I \subseteq \{0, \ldots, n \}$, let ${\cal A}_I = V(\prod_{i \in I} \ell_i(x))$ be the subarrangement of hyperplanes indexed by $I$. Let $K_I \subset \mathbb{C}^d$ be the left kernel of the matrix $A_I^\top$ and let $X_I = (\mathbb{C}^d \setminus {\cal A}_I)/K_I$. Note that $X_I$ is a very affine variety isomorphic to the complement of an arrangement of $|I|$ hyperplanes in $\mathbb{C}^{{\rm rank}(A_I^\top)}$.
\begin{theorem} \label{thm:flats}
    If ${\rm rank}(L) = d+1$ and $u \in \mathbb{C}^{n+1}$ is generic, then the intersection $\mathbb{L}_u \cap {\cal R}_L$ consists of finitely many points. For a flat $I$ of $M(L)$ we~have 
    \begin{enumerate}
    \setlength{\itemsep}{0cm}
        \item if ${\rm rank}(A_I^\top) = {\rm rank}(L_I)$, then $\mathbb{L}_u \cap {\cal R}_{L_I}^\circ = \emptyset$, 
        \item if ${\rm rank}(A_I^\top) = {\rm rank}(L_I) -1$, then the set-theoretic intersection $\mathbb{L}_u \cap {\cal R}_{L_I}^\circ$ consists of $(-1)^{{\rm rank}(A_I^\top)} \cdot \chi(X_I)$ many points.
    \end{enumerate}
\end{theorem}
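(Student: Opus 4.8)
The plan is to analyze the intersection $\mathbb{L}_u \cap {\cal R}_{L_I}^\circ$ stratum by stratum using the disjoint union~\eqref{eq:sqcup}, reducing each case to a scattering problem on a smaller hyperplane arrangement complement and then invoking Theorem~\ref{thm:orlik-terao}. First, I would observe that $\mathbb{L}_u \cap {\cal R}_L$ is finite for generic $u$: the linear space $\mathbb{L}_u$ has codimension $d = \dim {\cal R}_L$ and moves in a family that, for generic $u$, meets $ {\cal R}_L$ properly; since $ {\cal R}_L$ is irreducible of dimension $d$ and each stratum $ {\cal R}_{L_I}^\circ$ is locally closed of dimension $\operatorname{rank}(L_I)-1 \le d$, a dimension count (e.g.\ via a Bertini-type / generic-linear-section argument, or simply because the whole affine cone over $ {\cal R}_L$ has dimension $d+1$ and the $d$ linear equations cut it to dimension~$1$) forces the intersection to be zero-dimensional. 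This handles the finiteness claim and lets me treat each flat $I$ separately.

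For a fixed flat $I$, by Proposition~\ref{prop:stratification} we have $ {\cal R}_L \cap U_I = {\cal R}_{L_I}^\circ$, the dense stratum of the reciprocal linear space ${\cal R}_{L_I} \subset \mathbb{P}^{|I|-1}$ associated to the submatrix $L_I$. The equations defining $\mathbb{L}_u$ restricted to the coordinates indexed by $I$ read $A_I^\top \operatorname{diag}(u_I)\, y_I = 0$, so $\mathbb{L}_u \cap {\cal R}_{L_I}^\circ$ is exactly the set of scattering solutions for the arrangement whose coefficient matrix is $L_I$, but with the "$A$-part'' replaced by $A_I^\top$. Here the key structural point, paralleling Lemma~\ref{lem:nosolsatinf} and the discussion after it, is to split $L_I$ into its top row ($-b_I$) and the remaining rows ($A_I$), and to compare $\operatorname{rank}(A_I^\top)$ with $\operatorname{rank}(L_I)$. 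In case (i), $\operatorname{rank}(A_I^\top) = \operatorname{rank}(L_I)$, meaning the all-ones functional (the "$b$'' direction) is already in the row span of $A_I$, so the associated scattering problem is that of a \emph{central} arrangement (the translation vector lies in the span of the linear parts). As in the proof of Lemma~\ref{lem:nosolsatinf}, the complement of a central arrangement has Euler characteristic zero — equivalently, the Euler vector field argument $\sum_j x_j \partial_j {\cal L}_u = \sum_{i\in I} u_i$ shows there are no critical points for generic~$u$ — so $\mathbb{L}_u \cap {\cal R}_{L_I}^\circ = \emptyset$. In case (ii), $\operatorname{rank}(A_I^\top) = \operatorname{rank}(L_I) - 1$: now the "$b$'' direction is genuinely transverse to the span of the linear parts, so after quotienting by the kernel $K_I$ of $A_I^\top$ we get a genuine essential arrangement of $|I|$ hyperplanes in $\mathbb{C}^{\operatorname{rank}(A_I^\top)}$, whose complement is precisely $X_I = (\mathbb{C}^d \setminus {\cal A}_I)/K_I$; applying Theorem~\ref{thm:orlik-terao} to this arrangement, and checking that the map $\phi_I$ (reciprocals of the coordinates in $I$) is injective on $X_I$ so that set-theoretic and scheme-theoretic counts of $\mathbb{L}_u \cap {\cal R}_{L_I}^\circ$ agree with the number of critical points, yields the count $(-1)^{\operatorname{rank}(A_I^\top)} \cdot \chi(X_I)$.

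I expect the main obstacle to be \textbf{bookkeeping the genericity of $u$ across all strata simultaneously}. Theorem~\ref{thm:orlik-terao} gives the correct count for \emph{generic} parameters of each individual subarrangement, but the parameters entering the $I$-stratum are the subtuple $u_I$ (or $\operatorname{diag}(u_I)$-scaled versions thereof), and one must argue that a single generic choice of the global $u \in \mathbb{C}^{n+1}$ simultaneously lands in the generic locus for every flat $I$ — this is a finite intersection of dense open conditions, hence fine, but it needs to be stated carefully, and one must also ensure genericity of $u$ is compatible with the transversality needed for the finiteness claim. A secondary technical point is verifying that restricting $\mathbb{L}_u$ to the coordinate subspace $\{y_i = 0 : i \notin I\}$ really does produce the linear space cut out by $A_I^\top \operatorname{diag}(u_I) y_I = 0$ with no spurious extra equations — this follows because the rows of $A^\top$ restrict coordinate-wise — and that the identification of $X_I$ with an arrangement complement in $\mathbb{C}^{\operatorname{rank}(A_I^\top)}$ is compatible with the reciprocal-plane picture, i.e.\ that the quotient by $K_I$ on the domain matches the coordinate projection on the $\mathbb{P}^{|I|-1}$ side.
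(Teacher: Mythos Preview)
Your proposal is correct and follows essentially the same approach as the paper: reduce each stratum ${\cal R}_{L_I}^\circ$ to a scattering problem for the sub-arrangement, distinguish the central case (i) with Euler characteristic zero from the genuinely affine case (ii), and apply Theorem~\ref{thm:orlik-terao}. The paper makes your ``quotient by $K_I$'' step concrete by replacing $A_I^\top$ with a full-row-rank matrix $\hat{A}_I^\top$ having the same row span (and $L_I$ by $\hat{L}_I = \left(\begin{smallmatrix}-b_I^\top \\ \hat{A}_I^\top\end{smallmatrix}\right)$ in case (ii)), which simultaneously resolves your worry about ``spurious extra equations'' and identifies ${\cal R}_{L_I}$ with ${\cal R}_{\hat{A}_I^\top}$ or ${\cal R}_{\hat{L}_I}$; otherwise the arguments are the same, and your explicit bookkeeping of genericity across the finitely many flats is something the paper leaves implicit.
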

\begin{proof}
    We start with the proof of (i). Let $\hat{A}_I^\top$ be a matrix of size ${\rm rank}(A_I^\top) \times |I|$ with the same row span as $A_I^\top$. Since ${\rm rank}(A_I^\top) = {\rm rank}(L_I)$ we have ${\cal R}_{\hat{A}_I^\top} = {\cal R}_{A_I^\top} = {\cal R}_{L_I}$. The intersection $\mathbb{L}_u \cap {\cal R}_{L_I}^\circ$ can equivalently be expressed as 
    \[ \mathbb{L}_u \cap {\cal R}_{L_I}^\circ  \, = \, \{ y \in {\cal R}_{\hat{A}_I^\top}^\circ \, : \, \hat{A}_I^\top \,  {\rm diag}(u_I) \, y_I = 0 \}. \]
    Like in the proof of Lemma \ref{lem:nosolsatinf}, these are the scattering equations of an essential central arrangement in $\mathbb{C}^{{\rm rank}(A_I^\top)}$ with signed Euler characteristic zero. There are no solutions for generic $u_I$, which proves part (i). 

    For part (ii), let $\hat{L}_I = \left ( \begin{smallmatrix}
        -b_I^\top \\ \hat{A}_I^\top
    \end{smallmatrix}\right)$, with $\hat{A}_I^\top$ as above. We have ${\cal R}_{\hat{L}_I} = {\cal R}_{L_I}$. We find that $\mathbb{L}_u \cap {\cal R}_{L_I}^\circ$ is described by the scattering equations of ${\cal A}_I \subset \mathbb{C}^{{\rm rank}(A_I^\top)}$: 
    \[ \mathbb{L}_u \cap {\cal R}_{L_I}^\circ  \, = \, \{ y \in {\cal R}_{\hat{L}_I}^\circ \, : \, \hat{A}_I^\top \,  {\rm diag}(u_I) \, y_I = 0 \}. \]
    By Theorem \ref{thm:orlik-terao}, this set consists of $(-1)^{{\rm rank}(A_I^\top)} \cdot \chi(X_I)$ points. 
\end{proof}

\begin{corollary} \label{cor:criterion}
    Let ${\rm rank}(L) = d+1$. We have $\mathbb{L}_u \cap {\rm im} \, \phi = \mathbb{L}_u \cap {\cal R}_L$ for generic $u \in \mathbb{C}^{n+1}$ if and only if all flats $I$ of $M(L)$ except $I = \{0,\ldots,n \}$ are such that ${\rm rank}(A_I^\top) = {\rm rank}(L_I)$. That is, under this condition, the solutions to the scattering equations are in one-to-one correspondence with the points in $\mathbb{L}_u \cap {\cal R}_L$.
\end{corollary}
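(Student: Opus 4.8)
The plan is to read the statement off from the disjoint-union decomposition \eqref{eq:sqcup} together with Lemma \ref{lem:nosolsatinf} and Theorem \ref{thm:flats}. By Lemma \ref{lem:nosolsatinf}, $\mathbb{L}_u \cap {\rm im}\,\phi$ is precisely the top stratum $\mathbb{L}_u \cap {\cal R}_L^\circ$ appearing in \eqref{eq:sqcup}; since \eqref{eq:sqcup} is a disjoint union, the equality $\mathbb{L}_u \cap {\rm im}\,\phi = \mathbb{L}_u \cap {\cal R}_L$ holds for generic $u$ if and only if $\mathbb{L}_u \cap {\cal R}_{L_I}^\circ = \emptyset$ for every flat $I \subsetneq \{0, \dots, n\}$ of $M(L)$. (Recall that for any $I$ one has ${\rm rank}(L_I) \in \{{\rm rank}(A_I^\top),\, {\rm rank}(A_I^\top)+1\}$, so each proper flat falls in case (i) or case (ii) of Theorem \ref{thm:flats}.) The ``if'' direction is then immediate: if every proper flat $I$ satisfies ${\rm rank}(A_I^\top) = {\rm rank}(L_I)$, part (i) of Theorem \ref{thm:flats} empties all the lower strata. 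So the work lies in the ``only if'' direction, which I would prove in contrapositive form: assuming that some proper flat of $M(L)$ lies in case (ii), I must exhibit a proper flat $J$ with $\mathbb{L}_u \cap {\cal R}_{L_J}^\circ \neq \emptyset$.

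The point one must be careful about is that an individual case-(ii) flat $I$ need not contribute a point: by Theorem \ref{thm:flats}(ii) the set $\mathbb{L}_u \cap {\cal R}_{L_I}^\circ$ has $(-1)^{{\rm rank}(A_I^\top)}\chi(X_I)$ elements, and $\chi(X_I)$ can vanish --- for instance when ${\cal A}_I$ consists of two parallel hyperplanes together with one transversal hyperplane. To get around this I would choose $J$ to be \emph{minimal}, with respect to inclusion, among the proper flats of $M(L)$ that lie in case (ii). Because $J$ is a flat of $M(L)$, the flats of $M(L)$ strictly contained in $J$ are exactly the proper flats of the restricted matroid $N := M(L_J)$, and by minimality every one of them lies in case (i). Translating into the arrangement ${\cal A}_J \subset \mathbb{C}^{\,{\rm rank}(A_J^\top)}$ whose complement is $X_J$: every proper flat of $N$ is realized by a nonempty intersection of the hyperplanes $\{\ell_i = 0\}$, whereas the top flat $J$ --- being in case (ii) --- is not. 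Hence the intersection poset of ${\cal A}_J$ is the lattice of flats of $N$ with its maximum $\hat 1$ removed, while its minimum $\hat 0$ (the empty flat, since $N$ is loopless) is retained.

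Finally I would compute $\chi(X_J)$ via the standard fact that the topological Euler characteristic of a complex arrangement complement equals $\sum_F \mu(\hat 0, F)$ summed over its intersection poset. Using the identification above and writing $\mu$ for the Möbius function of the lattice of flats of $N$,
\[ \chi(X_J) \;=\; \sum_{F \neq \hat 1} \mu(\hat 0, F) \;=\; \Big(\sum_{F} \mu(\hat 0, F)\Big) - \mu(\hat 0, \hat 1) \;=\; -\,\mu(\hat 0, \hat 1), \]
because the total Möbius sum over the bounded lattice of flats of $N$, whose rank ${\rm rank}(L_J)$ is at least $1$, vanishes. Since $L$ has no zero column, $N$ is loopless, so $\mu(\hat 0, \hat 1) \neq 0$ and therefore $\chi(X_J) \neq 0$; over $\mathbb{R}$ this is the familiar statement that such an arrangement ${\cal A}_J$ has $|\mu(\hat 0, \hat 1)| \geq 1$ bounded chambers. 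By Theorem \ref{thm:flats}(ii) the set $\mathbb{L}_u \cap {\cal R}_{L_J}^\circ$ is then nonempty, so $\mathbb{L}_u \cap {\cal R}_L \supsetneq \mathbb{L}_u \cap {\cal R}_L^\circ = \mathbb{L}_u \cap {\rm im}\,\phi$, as required. I expect the main obstacle to be exactly this vanishing phenomenon: the crude ``pick any case-(ii) flat'' argument breaks down, and one has to pass to a minimal case-(ii) flat and then run the Möbius computation, which itself rests on the clean matroid fact that the flats below a flat are the flats of the restriction.
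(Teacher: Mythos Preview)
Your proof is correct, and more careful than the paper's: the paper states Corollary~\ref{cor:criterion} without proof, treating it as immediate from Lemma~\ref{lem:nosolsatinf} and Theorem~\ref{thm:flats}. You rightly flag that the ``only if'' direction is not entirely trivial, since a proper type-(ii) flat $I$ can have $\chi(X_I)=0$ and hence contribute nothing by Theorem~\ref{thm:flats}(ii). Your passage to a \emph{minimal} type-(ii) flat and the M\"obius computation resolves this cleanly.

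That said, a shorter route avoids the M\"obius argument. If $J$ is \emph{any} proper type-(ii) flat, work inside $\mathbb{P}^{|J|-1}\subset\mathbb{P}^n$: there ${\cal R}_{L_J}$ has dimension ${\rm rank}(L_J)-1$, while $\mathbb{L}_u\cap\mathbb{P}^{|J|-1}=\{y_J:A_J^\top\,{\rm diag}(u_J)\,y_J=0\}$ has dimension at least $|J|-1-{\rm rank}(A_J^\top)=|J|-{\rm rank}(L_J)$. These sum to $\dim\mathbb{P}^{|J|-1}$, so the projective dimension theorem forces $\mathbb{L}_u\cap{\cal R}_{L_J}\neq\emptyset$; since ${\cal R}_{L_J}\subset{\cal R}_L\setminus{\cal R}_L^\circ$ for proper $J$, this already gives $\mathbb{L}_u\cap{\cal R}_L\supsetneq\mathbb{L}_u\cap{\rm im}\,\phi$ via Lemma~\ref{lem:nosolsatinf}. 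Your argument buys more --- it locates a specific open stratum carrying boundary solutions and counts them exactly --- but for the bare ``only if'' the dimension count suffices.
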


\begin{remark} Geometrically, the flats of $M(L)$ are linear spaces in $\mathbb{P}^{d}$ obtained as intersections of subsets of the $n + 1$ hyperplanes given by $(x_0, \ldots, x_d)^\top \, L  = (A \, x - b \, x_0)^\top = 0$. The criterion in Corollary \ref{cor:criterion} is equivalent to no non-empty flats being contained in the hyperplane at infinity $\{ x_0= 0 \}$.
\end{remark}

\begin{example} \label{ex:uniformLandA}
    The matrix $L$ from Example \ref{ex:intro} gives rise to the rank-three uniform matroid on four elements, and $M(A^\top)$ is uniform of rank $2$. It is easy to verify that if $L$ has rank $d+1$ and $M(A^{\top})$ is uniform, then the criterion of Corollary \ref{cor:criterion} is satisfied. That is, the matroid of a generic $L$ has no flats at infinity. Hence, as observed in Example \ref{ex:intro}, the intersection points of $\mathbb{L}_u$ and ${\cal R}_L$ are in one-to-one correspondence with the solutions to the scattering equations. 
\end{example}

\begin{example}
    The flats of $M(L)$ with $L$ as in Example \ref{ex:boundarysolution} are $\emptyset$, $\{0\}$, $\{1 \}$, $\{2\}$, $\{3\}$, $\{0,1\}$, $\{0,2\}$, $\{0,3\}$, $\{1,2,3\}$ and $\{0,1,2,3\}$. By Theorem \ref{thm:flats}, the only strata of ${\cal R}_L$ contributing to the intersection $\mathbb{L}_u \cap {\cal R}_L$ are those for which ${\rm rank}(A_I^\top) = {\rm rank}(L_I)-1$. As we had observed in Example \ref{ex:boundarysolution}, for generic $u$, the two intersection points are contained in ${\cal R}_{L_{\{0,1,2,3\}}}^\circ$ and in ${\cal R}_{L_{\{0,1\}}}^\circ$. The flat $\{0,1\}$ is the intersection point of $\ell_0 (x) = \ell_1(x) = 0$, which lies at infinity in $\mathbb{P}^2$. 
\end{example}

\begin{cor}
    Let ${\rm rank}(L) = d+1$. The equality $(-1)^d \cdot \chi(X) = \deg {\cal R}_L$ holds if and only if ${\rm rank}(A_I^\top) = {\rm rank}(L_I)$ for each flat $I$ of $M(L)$, except $I = \{0, \ldots, n\}$. 
\end{cor}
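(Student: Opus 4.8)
The plan is to express both $\deg {\cal R}_L$ and $(-1)^d\chi(X)$ as intersection numbers of the same linear space $\mathbb{L}_u$ (for generic $u$) with ${\cal R}_L$, respectively with ${\rm im}\, \phi$, and then to feed this into Corollary~\ref{cor:criterion}. Since ${\rm rank}(L) = d+1$ we have ${\rm rank}(A) = d$, so for generic $u$ the $d$ scattering linear forms $g_i = \sum_{j} a_{ij} u_j\, y_j$ are linearly independent and $\mathbb{L}_u = V(g_1, \dots, g_d)$ has dimension $n-d$.

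First I would establish that, for generic $u$, the zero-dimensional scheme $\mathbb{L}_u \cap {\cal R}_L$ has length exactly $\deg {\cal R}_L$. By Theorem~\ref{thm:flats} this intersection is finite, so $g_1, \dots, g_d$ is a partial homogeneous system of parameters of the $(d+1)$-dimensional graded ring $\mathbb{C}[{\cal R}_L]$. As this ring is Cohen--Macaulay by \cite[Proposition~7]{Proudfoot2006}, the forms $g_1, \dots, g_d$ are a regular sequence, so quotienting by them multiplies the Hilbert series by $(1-t)^d$ and leaves the multiplicity unchanged; hence the length of $\mathbb{L}_u \cap {\cal R}_L$ is $\deg {\cal R}_L$. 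In particular $|\mathbb{L}_u \cap {\cal R}_L| \le \deg {\cal R}_L$, with equality exactly when this scheme is reduced. On the other hand $\phi$ carries the solutions of~\eqref{eq:scatteringeqs} bijectively onto $\mathbb{L}_u \cap {\rm im}\, \phi$, so $|\mathbb{L}_u \cap {\rm im}\, \phi| = (-1)^d\chi(X)$ by Theorem~\ref{thm:orlik-terao}, and those solutions are non-degenerate critical points of ${\cal L}_u$.

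Now I would prove the equivalence. Suppose ${\rm rank}(A_I^\top) = {\rm rank}(L_I)$ for every flat $I \ne \{0, \dots, n\}$. Corollary~\ref{cor:criterion} then gives $\mathbb{L}_u \cap {\cal R}_L = \mathbb{L}_u \cap {\rm im}\, \phi$ for generic $u$; since ${\rm im}\, \phi = ({\cal R}_L \cap U_{\{0,\dots,n\}}) \setminus {\cal R}_{A^\top}$ is open in ${\cal R}_L$ (Proposition~\ref{prop:stratification} and the proof of Lemma~\ref{lem:nosolsatinf}), this holds scheme-theoretically. Pulling back along the isomorphism $\phi \colon X \xrightarrow{\sim} {\rm im}\, \phi$ and using $\phi^{*}(g_i/y_0) = \ell_0 \cdot \partial_i {\cal L}_u$, in which $\ell_0$ is a unit on $X$, identifies this scheme with the critical scheme $\{ \partial_1 {\cal L}_u = \cdots = \partial_d {\cal L}_u = 0 \}$ of ${\cal L}_u$, which is reduced by the Hessian non-degeneracy in Theorem~\ref{thm:orlik-terao}; the first step then gives $\deg {\cal R}_L = |\mathbb{L}_u \cap {\rm im}\, \phi| = (-1)^d\chi(X)$. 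Conversely, if some flat $I \ne \{0, \dots, n\}$ has ${\rm rank}(A_I^\top) \ne {\rm rank}(L_I)$, Corollary~\ref{cor:criterion} produces, for generic $u$, a point of $\mathbb{L}_u \cap {\cal R}_L$ not on ${\rm im}\, \phi$, whence $\deg {\cal R}_L \ge |\mathbb{L}_u \cap {\cal R}_L| \ge |\mathbb{L}_u \cap {\rm im}\, \phi| + 1 = (-1)^d\chi(X) + 1$, so the equality fails.

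The main obstacle is the Cohen--Macaulay bookkeeping in the second paragraph: I need to be sure that finiteness of $\mathbb{L}_u \cap {\cal R}_L$ genuinely turns $g_1, \dots, g_d$ into a regular sequence on $\mathbb{C}[{\cal R}_L]$ (so that no degree is lost in the linear section), and that this linear section coincides scheme-theoretically with the critical scheme of ${\cal L}_u$ --- it is the reducedness of the latter, via the nonsingular Hessian, that promotes the inequality $|\mathbb{L}_u \cap {\cal R}_L| \le \deg {\cal R}_L$ to an equality in the reverse direction. A minor point to check along the way is that ${\rm im}\, \phi$ is open in ${\cal R}_L$ and that $\phi$ restricts to an isomorphism onto it, which is what lets scheme structures be transported across.
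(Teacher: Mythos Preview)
Your proof is correct and follows essentially the same route as the paper's: both directions go through Corollary~\ref{cor:criterion}, and the forward direction uses the Hessian non-degeneracy from Theorem~\ref{thm:orlik-terao} to ensure the intersection points have multiplicity one, so that their count equals $\deg {\cal R}_L$. The paper phrases this last step geometrically (``a transverse intersection of an $(n-d)$-dimensional linear space with a $d$-dimensional variety consists of its degree many points''), whereas you supply the extra algebraic bookkeeping via the Cohen--Macaulay property and the scheme isomorphism through $\phi$; the substance is the same.
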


\begin{proof}
    If the condition in the corollary is satisfied and $u$ is generic, then $\mathbb{L}_u \cap {\cal R}_L$ consists of $(-1)^d \cdot \chi(X)$ points by Corollary \ref{cor:criterion}. These intersection points have multiplicity one by Theorem \ref{thm:orlik-terao}. The equality $(-1)^d \cdot \chi(X) = \deg {\cal R}_L$ follows from the fact that a transverse intersection of an $(n-d)$-dimensional linear space with a $d$-dimensional algebraic variety consists of its degree many points. 

    If the condition is violated and $u$ is generic, then $\mathbb{L}_u \cap {\cal R}_L$ consists of more than $(-1)^d \cdot \chi(X)$ isolated points. Therefore $\deg {\cal R}_L >(-1)^d \cdot \chi(X)$.
\end{proof}

\section{Proudfoot-Speyer homotopies} \label{sec:4}
This section explains our method for finding all solutions to \eqref{eq:scatteringeqs} numerically. The algorithm is implemented in Julia (\rm{v1.10.5}) using  \texttt{Oscar.jl}~\cite{OSCAR} (\rm{v1.0.4}) and \texttt{HomotopyContinuation.jl} \cite{breiding2018homotopycontinuationjl} (\rm{v2.0}). All code is available at \cite{mathrepo}.

Our main computational tool is \emph{homotopy continuation}. We recall the basic ideas and refer to the textbook \cite{Sommese:Wampler:2005} for more details. Homotopy continuation is a computational paradigm for finding approximate isolated solutions of systems of polynomial equations. It is based on a deformation of the polynomial system at hand, called the \emph{target system}, into another polynomial system, called the \emph{start system}, whose solutions are easy to compute. Concretely, let $F(x) = (f_1(x), \ldots, f_\ell(x)) = 0$ be a system of $\ell$ polynomial equations in $k \leq \ell$ variables $(x_1, \ldots, x_k)$. A homotopy for solving $F(x) = 0$ is a polynomial map
$H(x,t)  = (h_1(x,t), \ldots, h_\ell(x,t)) \colon \CC^k \times [0,1] \longmapsto \CC^\ell$
satisfying
\begin{enumerate}
\setlength{\itemsep}{0cm}
    \item[1.] $H(x,1) = F(x)$.
    \item[2.] The start system $G(x)=H(x,0)=0$ has at least as many regular isolated solutions in $\mathbb{C}^k$ as $F(x)=0$ and they are easy to compute.
    \item[3.] For any $t\in [0,1)$, the system $H(x,t) = 0$ has the same number of regular isolated solutions in $\mathbb{C}^k$ as $G(x)= 0$.
\end{enumerate}
A \emph{regular isolated solution} of $H(x,t) = 0$ for fixed $t$ is a point $x \in \mathbb{C}^k$ at which $H(x,t) = 0$ and the $\ell \times k$ Jacobian matrix $\left(\frac{\partial h_i}{\partial x_j}(x,t)\right)_{ij}$ has rank $k$. The new variable $t$ is called the \emph{continuation parameter}. The task of a homotopy algorithm is to track each solution of the start system $G(x)=0$ along a continuous solution path as $t$ moves from $0$ to $1$. Such a solution path is a parametric curve $x(t)$ satisfying $H(x(t),t) = 0$. Under suitable assumptions, the solutions of $F(x) = 0$ are among the limits of these paths for $t \rightarrow 1$. In practice, tracking the paths numerically comes down to solving Davidenko's differential equation using \emph{predictor-corrector schemes}, see \cite[Section 2.3]{Sommese:Wampler:2005}.
If the start system $G$ has as many regular isolated solutions as $F$ and they all converge to a solution of $F$, then the homotopy $H(x,t)$ is called \emph{optimal}. This is the favorable case in which no path is lost along the way, so that no computational effort is~wasted.

\begin{example} \label{ex:introcontdcontd}
    In Example \ref{ex:intro} we saw that the homotopy $H(y,t)$ given by
    \[ \small \begin{pmatrix}
        u_0y_0 - u_2y_2 - 2u_3y_3, \, \, u_1y_1 - 2u_2y_2 -u_3y_3, \,\,
        y_1y_2y_3 - t^{1} \, y_0 y_2y_3 - t^{2} \, y_0y_1y_3 + t^3 \, y_0y_1y_2
    \end{pmatrix}\]
    is optimal for solving \eqref{eq:scatteringexample}. The three solutions for $t = 0$ are easy to compute: we simply solve three linear systems with $y_1 = 0, y_2 = 0$ and $y_3 = 0$ respectively. 
\end{example}

%From a theoretical point of view, our algorithm is based on 
The homotopy in Example \ref{ex:introcontdcontd} is based on a flat degeneration of $\mathcal{R}_L$ to a union of coordinate subspaces. Recall that a \emph{flat degeneration} of a projective variety $V \subseteq \PP^n$ is a family of varieties~$\mathcal{X}$ together with a flat morphism ${\pi\colon \mathcal{X} \to  \CC^1}$ such that any fiber $\pi^{-1}(t)$ with $t \in \CC^1\setminus \{0\}$ is isomorphic to $V$.  These are called the \emph{general fibers}, and $\pi^{-1}(0)$ is the \emph{special fiber}. Flatness ensures that the special fiber shares many properties with the general fiber. This includes dimension and degree, Hilbert function, Cohen-Macaulayness and normality, see \cite{bruns2022determinants}.
%The flat degenerations we use in this paper are \emph{Gröbner degenerations} \cite[Section~15.8]{Eisenbud}. %More precisely, let~$\CC[\mathcal{R}_L]$ be the coordinate ring of a reciprocal linear space $\mathcal{R}_L$, that can be represented as a factor ring $\CC[y_0,\ldots,y_n]/I(\mathcal{R}_L)$, and 
%The special fiber in such degenerations is the scheme defined by the initial ideal of the vanishing ideal $I(V)$ of $V$. 

Let $I(\mathcal{R}_L) \subset \mathbb{C}[y_0, \ldots, y_n]$ be the vanishing ideal of $\mathcal{R}_L$, as above. 
We consider  the initial ideal $J$ of $I(\mathcal{R}_L)$ with respect to a weight vector $\omega \in \mathbb{Z}^{n+1}$:
$$J \, := \, \mathrm{in}_{\omega}(I(\mathcal{R}_L)) \,  = \, \mathrm{span}_{\CC}\{\mathrm{in}_{\omega}(f) \, : \,  f\in I(\mathcal{R}_L)\}.$$ 
Its variety is $V(J)$. By \cite[Theorem 4]{Proudfoot2006}, for a  generic weight vector $\omega$, $J$ is a square-free monomial ideal and $V(J)$ is a union of coordinate subspaces.
Moreover, $V(J)$ is the special fiber in a flat degeneration of ${\cal R}_L$, as we~now explain.

We extend our polynomial ring to $\CC[y,t] = \CC[y_0,\ldots, y_n,t]$ by adding a continuation parameter $t$. %The torus~$\mathbb{C}^\times$ acts on the polynomial ring~$\CC[y_0,\ldots, y_n]\subseteq \CC[y_0,\ldots, y_n,t]$ as follows. 
Let $f(y) = \sum c_\alpha y^{\alpha} \in \CC[y_0, \ldots, y_n]$ be a polynomial. We define~$\omega(f) := \underset{\alpha, \, c_\alpha\neq 0}{\max} \{ \omega \cdot \alpha\}$ and $f_t^\omega(y,t) =  \sum c_\alpha t^{\omega(f)-\omega \cdot \alpha} y^\alpha \in \mathbb{C}[y,t]$. The~ideal
\[\label{weight_degeneration}
 I(\mathcal{R}_L)_t^{\omega} \, := \,  \langle  \,  f_t^{\omega}\, : \,  f\in I(\mathcal{R}_L) \, \rangle \, \subset \,  \CC[y,t]  \]
defines a family of varieties ${\cal X} = V(I(\mathcal{R}_L)_t^{\omega}) \subset \mathbb{P}^n \times \mathbb{C}$. By \cite[Theorem~15.17]{Eisenbud} this family is flat over $\CC$. It defines the Gröbner degeneration of ${\cal R}_L$ with respect to the weight $\omega$, whose special fiber is $\pi^{-1}(0) = V(J)$. 
\begin{example}
    The degeneration of the cubic surface $\mathcal{R}_L$ from Example \eqref{ex:intro} is a Gröbner degeneration with respect to the weight vector $\omega = (1,2,3,4)$.
\end{example}
We now use the degeneration explained above in a homotopy algorithm for solving the scattering equations.  %The target equations \eqref{eq:embeddedeq} on the reciprocal linear space $\mathcal{R}_L$ form the system
Let $A_0^\top \in \CC^{d \times (n+1)}$ be a matrix with generic entries.
The target system $F(y) = 0$ and the start system $G(y)= 0$ are given by 
\begin{equation*}
    F(y) \,= \,  \begin{pmatrix}
        A^\top \, \mathrm{diag}(u) \, y \\
        (f_C(y))_{C \in {\rm circuits}(M(L))} 
    \end{pmatrix}, \quad  G(y) \, = \,  \begin{pmatrix}
        A_0^\top \,  y \\
        ((f_C)_t^\omega(y,0))_{C \in {\rm circuits}(M(L))} 
    \end{pmatrix}.
    \end{equation*}
Here $\{f_C \colon C \in {\rm circuits}(M(L))\}$ is the universal Gröbner basis of $I(\mathcal{R}_L)$ from Theorem \ref{thm:proudfoot-speyer}. The solutions to $F(y) = 0$ are the points in $\mathbb{L}_u \cap {\cal R}_L$. %We define the start system 
%\begin{equation*}
    %G(y) \, = \,  \begin{pmatrix}
     %   A_0^\top \,  y \\
    %    ({\rm in}_\omega f_C(y))_{C \in {\rm circuits}(M(L))} 
   % \end{pmatrix} \, = \, 0,
%\end{equation*}
 To connect the target system $F(y)$ and the start system $G(y)$, we set up the homotopy
\begin{equation}\label{eq:homotopy}
    H(y,t) \, = \,  \begin{pmatrix}
        (1-t)\, A_0^\top \,  y + t \,  A^\top \, \mathrm{diag}(u) \, y  \\
        ((f_C)^\omega_t(y,t))_{C \in {\rm circuits}(M(L))}     \end{pmatrix}.
\end{equation}
In words, $H(y,t)$ is a combination of a \emph{straight line homotopy} for the linear part of the system and a Gröbner degeneration of the reciprocal linear space~$\mathcal{R}_L$. Algorithm \ref{alg:PShomotopy} summarizes how to use this homotopy to solve the scattering equations numerically. Below, we briefly comment on the steps.%We are ready to present our homotopy algorithm for solving the scaterring equations~\eqref{eq:embeddedeq} on a reciprocal linear space numerically.

 \begin{algorithm}
 \small
\caption{Proudfoot-Speyer homotopy algorithm}
 \label{alg:PShomotopy}
\SetAlgoLined
\KwIn{A matrix $L\in \CC^{(d+1)\times (n+1)}$ of rank $d+1$ representing a hyperplane arrangement ${\cal A}$ and a generic vector $u \in \CC^{n+1}$.}
\vspace{0.2cm}
\KwOut{Solutions to the equations \eqref{eq:scatteringeqs}.}
%\begin{enumerate}
\vspace{0.2cm}
    %\item[A.] Precomputation
    
    1. Choose a generic vector $\omega \in \ZZ^{n+1}$ and generic $A_0^\top \in \CC^{d \times (n+1)}$.
    
    2. Compute the polynomials $\{f_C \colon C \text{ is a circuit of } M(L)\}$.

    3. Find a minimal prime decomposition of the ideal $J = \mathrm{in}_\omega(I(\mathcal{R}_L))$.

    %\vspace{-0.2cm}
    %\item[B.] Solving linear systems
    
    4. For each irreducible component $Y_i$ of $V(J)$, find the unique  solution $y\in Y_i$ of the linear system $A_0^\top \, y = 0$.

    %\vspace{-0.2cm}
    %\item[C.] Homotopy

    5. For each of the solutions from step 4, trace the homotopy \eqref{eq:homotopy} along a smooth path in $\CC^1$ from $t=0$ to $t=1$.
    
    6. Return the inverse image under $\phi$ of the solutions from step 5 that 
    have only non-zero  coordinates.
    \vspace{0.2cm}
%\end{enumerate}
\end{algorithm}
The genericity condition for $\omega$ in step 1 is that the weight $\omega$ should define a linear order on the ground set $\{0,\ldots,n\}$ of the matroid $M(L)$. That is, all its entries should be distinct. The genericity condition for the matrix $A_0^\top$ is that $A_0^\top \, y = 0$ defines a linear subspace of codimension $d$ which cuts the variety $V(J)$ in $\deg V(J) = \deg \mathcal{R}_L$ many points. In our code, $A_0^\top$ can optionally be inputted by the~user. We have seen in Example \ref{ex:intro} that one can sometimes pick $A_0^\top = A^\top {\rm diag}(u)$, so that the first $d$ equations in $H(y,t)$ do not involve $t$. The matrix $A^\top {\rm diag}(u)$ might not satisfy our genericity assumption, see Remark \ref{rem:badstart}.

In step 2, we compute the circuits of $M(L)$ to find the universal Gröbner basis from Theorem \ref{thm:proudfoot-speyer}. For step 3, we find broken circuits with respect to the weight vector $\omega$ that generate the ideal $J$ following~\cite{Proudfoot2006}. The minimal prime decomposition of $J$ is then computed using only the combinatorics of $M(L)$. Let $\omega$ be a linear order on $\{0,\ldots,n\}$. Recall from \cite{Proudfoot2006} that a \emph{broken circuit} of $M(L)$ is obtained from a circuit of $M(L)$ by deleting the element with the smallest $\omega$-weight.
Let $M^{\omega}(L)$ be the matroid on $\{0,\ldots,n\}$ whose circuits are the minimal broken circuits of $M(L)$ with respect to inclusion.
\begin{proposition}\label{prop:reciprocal_degree}
    The reciprocal degree $\deg \mathcal{R}_L$ equals the number of bases of the matroid~$M^{\omega}(L)$. The minimal primes of the ideal $J$ are $\langle y_i \colon i \in B^c\rangle $, where $B \subset \{0,\ldots,n\}$ runs over all bases of $M^{\omega}(L)$ and $B^c = \{0,\ldots,n\}\setminus B$. 
\end{proposition}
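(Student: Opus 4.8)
The plan is to exploit the known combinatorial description of the initial ideal $J = \mathrm{in}_\omega(I(\mathcal{R}_L))$ in terms of broken circuits, due to Proudfoot--Speyer, together with the classical combinatorics of the \emph{broken circuit complex}. First I would recall from \cite[Theorem 4]{Proudfoot2006} that, for a weight vector $\omega$ inducing a linear order on $\{0,\ldots,n\}$, the initial forms $\mathrm{in}_\omega(f_C)$ of the circuit polynomials are the monomials $\prod_{j \in C \setminus \{\min_\omega C\}} y_j$, and that these generate $J$; equivalently $J$ is the Stanley--Reisner ideal of the simplicial complex $\Delta$ whose minimal non-faces are exactly the broken circuits of $M(L)$. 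This $\Delta$ is the broken circuit complex, and its Stanley--Reisner ideal is $J = \langle \, \prod_{j \in \mathrm{bc}} y_j \, : \, \mathrm{bc} \text{ a broken circuit of } M(L)\,\rangle$.

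Next I would identify $\Delta$ with a matroid complex. The observation is that a subset $S \subseteq \{0,\ldots,n\}$ is a face of $\Delta$ if and only if $S$ contains no broken circuit of $M(L)$, which is precisely the condition that $S$ is independent in the matroid $M^\omega(L)$ defined in the excerpt (the matroid whose circuits are the inclusion-minimal broken circuits of $M(L)$). Hence $\Delta$ is the independence complex of $M^\omega(L)$. From this, two things follow immediately. The minimal primes of a Stanley--Reisner ideal $I_\Delta$ correspond to the facets of $\Delta$: $I_\Delta = \bigcap_{F \text{ facet}} \langle y_i : i \notin F\rangle$, and the minimal primes are $\langle y_i : i \notin F \rangle$ as $F$ ranges over the facets. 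Since $M^\omega(L)$ is a matroid, the facets of its independence complex are exactly its bases $B$, all of the same cardinality $\mathrm{rank}(M(L)) = d+1$. This yields the second claim: the minimal primes of $J$ are $\langle y_i : i \in B^\complement\rangle$ as $B$ runs over the bases of $M^\omega(L)$.

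For the degree statement, I would argue that $\deg \mathcal{R}_L = \deg V(J)$ by flatness of the Gröbner degeneration (as recalled in the paragraph preceding the proposition, flat degenerations preserve degree), and that the degree of a reduced union of coordinate subspaces $V(J)$ equals the number of its top-dimensional components, i.e.\ the number of facets of $\Delta$ — provided $\Delta$ is pure of dimension $d$. Purity holds because $\Delta$ is a matroid complex. Each top-dimensional coordinate subspace is a linear space of degree one, and since a generic linear space of complementary dimension meets each of them in one point and meets no lower-dimensional component, $\deg V(J)$ counts exactly the facets, which are the bases of $M^\omega(L)$. Alternatively one can cite that the broken circuit complex is shellable (Provan--Billera / Brylawski) hence Cohen--Macaulay, so $\dim_\mathbb{C} \mathbb{C}[y]/J$ in large degree equals the number of facets; but purity plus the elementary degree-of-a-coordinate-arrangement argument suffices.

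The main obstacle — really the only point requiring care rather than bookkeeping — is pinning down that $\mathrm{in}_\omega(I(\mathcal{R}_L))$ is \emph{exactly} the Stanley--Reisner ideal of the broken circuit complex, generated by the broken-circuit monomials, rather than something strictly larger. This is where Theorem \ref{thm:proudfoot-speyer} is essential: because $\{f_C\}$ is a \emph{universal} Gröbner basis, $\{\mathrm{in}_\omega(f_C)\}$ generates $\mathrm{in}_\omega(I(\mathcal{R}_L))$ for every $\omega$, so no "hidden" initial forms appear, and $J$ is precisely the monomial ideal generated by the broken-circuit monomials. Once this identification is in hand, everything else is the standard dictionary between square-free monomial ideals and simplicial complexes applied to the matroid complex of $M^\omega(L)$. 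I would also double-check the edge convention that the empty broken circuit does not occur (no circuit of size $1$, i.e.\ $M(L)$ is loopless, which holds since $L$ has no zero columns after the reduction ${\rm rank}(L) = d+1$), ensuring $\Delta$ is nonempty and $J$ is a proper ideal.
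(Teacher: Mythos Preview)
Your proof is correct and follows essentially the same route as the paper's: identify $J$ with the Stanley--Reisner ideal of the broken circuit complex, recognize that complex as the independence complex of $M^\omega(L)$, and read off both the minimal primes (facet complements) and the degree (number of facets, equal to the number of bases) from this description together with flatness of the Gr\"obner degeneration. Your write-up is in fact more explicit than the paper's on two points the paper leaves implicit---why the universal Gr\"obner basis guarantees $J$ is \emph{exactly} the broken-circuit monomial ideal, and why purity of the matroid complex ensures the degree of $V(J)$ counts only the top-dimensional facets.
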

\begin{proof}
    In \cite{Proudfoot2006}, it was shown that $\mathbb{C}[\mathcal{R}_L]$ flatly degenerates to the Stanley-Reisner ring of the broken circuits simplicial complex $\mathrm{bc}_{\omega}(L)$ on $\{0,\ldots,n\}$. Its faces are subsets of $\{0,\ldots,n\}$ that do not contain any broken circuit. The degree of $\mathcal{R}_L$ is the number of facets of $\mathrm{bc}_{\omega}(L)$, which is the number of maximal subsets of the ground set that do not contain any broken circuit. By construction of the matroid $M^{\omega}(L)$, these are precisely the bases of $M^{\omega}(L)$.
        %To conclude, we recall that 
        The facet complements generate the minimal prime components of the  Stanley-Reisner ideal.
\end{proof}
The fact that the initial monomial ideal $J$ is squarefree implies that the start system has only regular isolated solutions. Since our algorithm relies heavily on results from \cite{Proudfoot2006}, we chose the name \emph{Proudfoot-Speyer homotopy}.

%\begin{remark}
  %  The set of broken circuits of $M(L)$ might not satisfy the minimality condition on circuits. %That is, one broken circuit can be a subset of another broken circuit. In this case, 
%    To define the matroid $M^{\omega}(L)$, we must consider only the broken circuits that minimally generate the ideal $J$. 
%\end{remark}

\begin{theorem} \label{thm:optimal}
    Let ${\rm rank}(L) = d+1$ and suppose that all flats $I$ of $M(L)$ except $I = \{0,\ldots,n \}$ are such that ${\rm rank}(A_I^\top) = {\rm rank}(L_I)$. For generic $u \in \mathbb{C}^{n+1}$, the Proudfoot-Speyer homotopy from Algorithm \ref{alg:PShomotopy} is optimal for solving \eqref{eq:scatteringeqs}, meaning that the number of homotopy paths equals the number of solutions. 
\end{theorem}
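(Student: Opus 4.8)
The proof consists of three facts, after which optimality in the sense recalled above is immediate: \textbf{(A)} the start system $G(y)=0$ has exactly $N:=\deg {\cal R}_L$ regular isolated solutions; \textbf{(B)} the target system $F(y)=0$ has exactly $N$ regular isolated solutions, each with nonzero coordinates, whose images under $\phi^{-1}$ are the solutions of \eqref{eq:scatteringeqs}; \textbf{(C)} for a suitable continuation path the $N$ solution paths remain distinct and converge bijectively onto the $N$ solutions of $F$, so no path is lost. Since the last corollary of Section~\ref{sec:3} gives $N=(-1)^d\cdot\chi(X)$ under the present hypothesis on the flats, this also shows that the number of paths equals the number of solutions to \eqref{eq:scatteringeqs}.

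For \textbf{(A)}: by Theorem~\ref{thm:proudfoot-speyer} the $f_C$ form a universal Gröbner basis of $I({\cal R}_L)$, so for generic $\omega$ the monomials $(f_C)_t^\omega(y,0)=\mathrm{in}_\omega(f_C)$ generate the squarefree ideal $J=\mathrm{in}_\omega(I({\cal R}_L))$. Since $\mathbb{C}[{\cal R}_L]$ is arithmetically Cohen--Macaulay \cite[Proposition~7]{Proudfoot2006} and the Gröbner degeneration is flat \cite[Theorem~15.17]{Eisenbud}, $\mathbb{C}[y]/J$ is Cohen--Macaulay, so $V(J)$ is pure of dimension $d$; being reduced with $N$ top-dimensional components by Proposition~\ref{prop:reciprocal_degree}, it has degree $N$. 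A generic codimension-$d$ linear space $\{A_0^\top y=0\}$ meets each component transversally in one point and avoids their pairwise intersections (of dimension $<d$), so, using squarefreeness of $J$, the start system has exactly $N$ regular isolated solutions.

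For \textbf{(B)}: the hypothesis on the flats together with Corollary~\ref{cor:criterion} gives $\mathbb{L}_u\cap\mathrm{im}\,\phi=\mathbb{L}_u\cap{\cal R}_L$, and the last corollary of Section~\ref{sec:3} gives $|\mathbb{L}_u\cap{\cal R}_L|=(-1)^d\cdot\chi(X)=N$. These $N$ points lie in ${\cal R}_L^\circ\cong X$, hence are smooth points of ${\cal R}_L$, and by Theorem~\ref{thm:orlik-terao} are non-degenerate critical points of ${\cal L}_u$; via $\phi$ this is exactly transversality of $\mathbb{L}_u$ and ${\cal R}_L$ at each point. As the $f_C$ cut out ${\cal R}_L$ scheme-theoretically, the $N$ points are regular isolated solutions of $F(y)=0$, and $\phi^{-1}$ recovers the scattering solutions; they all have nonzero coordinates because $\mathrm{im}\,\phi\subset U_{\{0,\ldots,n\}}$.

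The main work is \textbf{(C)}. Realize the homotopy \eqref{eq:homotopy} as the restriction to the $t$-line of a proper morphism: let $\mathcal{X}=V(I({\cal R}_L)_t^\omega)\subset\mathbb{P}^n\times\mathbb{C}$, flat over $\mathbb{C}$, let $\mathcal{Z}\subset\mathcal{X}$ be cut by $(1-t)A_0^\top y+tA^\top{\rm diag}(u)\,y=0$, and let $p\colon\mathcal{Z}\to\mathbb{C}$ be the projection; then $\mathcal{Z}_0$ is the start scheme of \textbf{(A)} and $\mathcal{Z}_1=\mathbb{L}_u\cap{\cal R}_L$ is the target scheme of \textbf{(B)}. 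For $t\neq0$, $\mathcal{X}_t$ is the image of ${\cal R}_L$ under $y_i\mapsto t^{\omega_i}y_i$, so $\mathcal{Z}_t$ is isomorphic to the section of ${\cal R}_L$ by the kernel of $\big((1-t)A_0^\top+tA^\top{\rm diag}(u)\big){\rm diag}(t^\omega)$. This matrix has rank $d$ and its kernel meets ${\cal R}_L$ transversally in $N$ reduced points, for all but finitely many $t$: as $t$ varies these kernels trace a rational curve in $\Gr(n-d,n)$ that is not contained in the proper locus of linear spaces non-transverse to ${\cal R}_L$, since $t=1$ (by genericity of $u$, together with \textbf{(B)}) already provides a transverse member, and the locus where the matrix drops rank is finite as well. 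Choosing the continuation path $\Gamma$ from $0$ to $1$ to avoid the resulting finite exceptional set $S\subset\mathbb{C}\setminus\{0\}$ (possible since $1\notin S$ and $t=0$ is handled by \textbf{(A)}), every fiber of $p$ over $\Gamma$ is $N$ reduced points, so over $\Gamma$ the finite morphism $p$ restricts to a trivial $N$-sheeted covering — the $N$ solution paths — which therefore match the $N$ start solutions at $t=0$ bijectively with the $N$ target solutions at $t=1$, and properness of $p$ forbids path divergence. I expect the transversality step, namely checking that this curve of linear sections leaves the non-transverse locus and hence meets it only finitely often, to be the only delicate point; everything else follows from flatness of the Proudfoot--Speyer degeneration and the results of Section~\ref{sec:3}.
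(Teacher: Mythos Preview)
Your proof is correct, but it establishes considerably more than the paper does. The paper's own proof is two sentences: the number of homotopy paths is $\deg{\cal R}_L$ by construction of the start system, and Corollary~\ref{cor:criterion} (together with the corollary immediately following it) shows that under the flat hypothesis this equals the number of solutions to \eqref{eq:scatteringeqs}. The paper takes the clause ``meaning that the number of homotopy paths equals the number of solutions'' as the operative content of ``optimal'' and proves only that numerical equality.

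You instead prove optimality in the stronger sense introduced earlier in Section~\ref{sec:4}: that the start solutions are regular, that the target solutions are regular, and that the paths actually converge bijectively. Your parts \textbf{(A)} and \textbf{(B)} are fine and largely parallel to material already in the paper (Proposition~\ref{prop:reciprocal_degree}, Corollary~\ref{cor:criterion}, Theorem~\ref{thm:orlik-terao}). Part \textbf{(C)} is the genuine addition: you realize the homotopy as a proper family over the $t$-line, identify the fibers for $t\neq 0$ as torus-translated linear sections of ${\cal R}_L$, and argue that the curve of sections in $\Gr(n-d,n)$ meets the non-transverse locus only finitely often because $t=1$ is already transverse. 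This is correct and is exactly the kind of argument one needs to justify that the homotopy is well-posed, not merely numerically balanced. The paper elects to leave this implicit, presumably because such path-tracking arguments are standard in the homotopy continuation literature (cf.\ the reference to \cite{Sommese:Wampler:2005}); your version makes it explicit and self-contained, at the cost of length.
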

\begin{proof}
    The number of homotopy paths in a Proudfoot-Speyer homotopy equals the degree of ${\cal R}_L$. By Corollary \ref{cor:criterion}, the conditions in the theorem imply that this is also the number of solutions to the scattering equations.
\end{proof}

Notice that Theorem \ref{thm:optimal} implies Theorem \ref{thm:optimal_intro} (see Example \ref{ex:uniformLandA}). 
Our Julia package \texttt{ProudfootSpeyerHomotopy} \cite{mathrepo} implements Algorithm \ref{alg:PShomotopy}. %The code and a tutorial on how to use its main functionalities are found at \cite{mathrepo}.
\begin{comment}
We illustrate our main function \texttt{solve}\_\texttt{PS} on the running Examples \ref{ex:intro},~\ref{ex:boundarysolution}.
 \begin{minted}{julia}
using ProudfootSpeyerHomotopy #loading the package
L1 =  [0 0 2 2; 1 0 -1 -2; 0 1 -2 -1]
u1 = randn(ComplexF64,size(L1,2)) #input a generic data vector u
solve_PS(L1, u1)

L2 =  [1 0 0 0; 1 1 0 1; 0 0 1 1]
u2 = randn(ComplexF64,size(L2,2))  
solve_PS(L2, u2)
    \end{minted}
The output in line 4 is a 3-element vector of solutions.  Reciprocal degree 3 and ML degree 3 are printed by the function. The output in line 8 is a 1-element vector of solutions. The function prints reciprocal and ML degree, that are 2 and 1 respectively.
\end{comment}

\begin{example}
   In Table \ref{tab:generic_arrangement} we report timings for generic hyperplane arrangements with several values for $d$ and $n$. Our homotopy method is optimal for such arrangements, see Theorem \ref{thm:optimal}. In our experiment, the entries of the matrix $L$ are random uniformly distributed integer numbers in the interval $[-20,20]$ and the parameters $u$ are random complex numbers drawn from a standard normal distribution. The number of solutions in each case is $\binom{n}{d}$.
\begin{table}[ht]
\centering
\footnotesize
\begin{tabular}{c|cccccc}
\diagbox{$d$}{$n$}  & 6         & 7         & 8       & 9        & 10        & 11 \\ \hline 
2& $0.07s$ & $0.19s$ & $0.50s$ & $1.11s$ & $2.90s$ & $6s$ \\ 
 3              & $0.076s$  & $0.29s$   & $1.20s$ & $3.90s$  & $13.85s$  & $41.90s$ \\ 
 4              & $0.041s$  & $0.21s$   & $1.26s$ & $13.42s$ & $30.16s$  & $132s$ \\ 
 5              & $0.017s$  & $0.10s$   & $0.54s$ & $7.81s$  & $41.15s$  & $194s$ \\ 
 6              &           & $0.02s$   & $0.54$  & $2.78s$  & $23.50s$  & $236s$ \\ 
 7              &           &           & $0.03s$ & $2.64s$  & $9.94s$   & $124s$ \\ 
 8              &           &           &         & $0.04s$  & $9.74s$   & $59s$ \\ 
\end{tabular}
    \caption{Timing results (in seconds) for generic hyperplane arrangements.} \label{tab:generic_arrangement}
\end{table}
\end{example}

\vspace{-0.5cm}

\section{Scattering equations on $\mathcal{M}_{0,m}$} \label{sec:5}
%In this section we provide the connection to theoretical particle physics. 
In this section we focus on $X \simeq \mathcal{M}_{0,m}$, the configuration space of $m$ distinct points on the projective line $\PP^1$. %Using the $\mathrm{PGL}(2) \times (\CC^{\times})^m$ free action (see \cite[Lemma 1.2]{lam2024moduli}), we may present 
A point in $\mathcal{M}_{0,m}$ is represented as a $2\times m$~matrix
\begin{equation}\label{point_of_M0m}
    \begin{pmatrix}
1 & 1 & 1 & \ldots & 1 & 0\\
0 & 1 & x_1 & \ldots & x_{m-3} & 1
\end{pmatrix}
\end{equation}
whose $2 \times 2$-minors $p_{ij}(x)$ are non-zero. 
The $i$-th column represents homogeneous coordinates of a point $\sigma_i \in \PP^1$ and imposing that the minors are non-zero means $\sigma_i \neq \sigma_j$ for $i \neq j$.  The \emph{CHY (Cachazo-He-Yuan) scattering equations}~are 
\begin{equation}\label{eq: CHY}
 \frac{\partial \mathcal{L}_s}{\partial x_1} \, = \ldots = \, \frac{\partial \mathcal{L}_s}{\partial x_{m-3}} \, = \, 0, \quad \text{ where } \quad \mathcal{L}_s \, = \,  \log \prod_{i<j} p_{ij}(x)^{s_{ij}}.
 \end{equation}
The exponents $s_{ij}$ are called \emph{Mandelstam invariants} in physics. They encode the momenta of $m$ particles involved in a scattering process. The columns of the $2\times m$-matrix \eqref{point_of_M0m} are indexed by these particles.  %They are coming from a \emph{quantum field theory} and should satisfy a symmetry condition: $s_{ii} = 0$ and $s_{ij} = s_{ji}$ and a momentum conservation $\sum_{j=1}^m s_{ij} = 0$. The log-likelihood function $\mathcal{L}_s$ plays the role of the \emph{scattering potential}. 
The \emph{CHY amplitude} of the scattering process is a global residue over the solutions of~\eqref{eq: CHY}. It is a rational function in the Mandelstam invariants $s_{ij}$.
This CHY formalism motivates the importance of solving scattering equations in theoretical particle physics \cite{cachazo2014scattering,lam2024moduli}.

The above discussion models $\mathcal{M}_{0,m}$ as a hyperplane arrangement complement. The arrangement is $\mathcal{A}_m = V(\prod_{i<j} p_{ij}(x))$ in $\CC^{m-3}$. There are $(m-3)!$ bounded regions in the corresponding real arrangement complement in~$\RR^{m-3}$ \cite[Proposition 1]{sturmfels2021likelihood}. Theorem \ref{thm:varchenko} tells us that \eqref{eq: CHY} has $(m-3)!$ solutions.

Let $L_m$ be the matrix associated to the hyperplane arrangement~$\mathcal{A}_m$. Since the minors $p_{ij}$ are of the form $x_i$, $x_i - 1$ or $x_j - x_i$ for $j>i$, we can write $L_m$ as
\begin{comment}
\begin{equation}\label{matrix: Lm}
\newcommand{\?}[1]{\multicolumn{1}{r|}{#1}}
L_m = 
%\begin{footnotesize} 
\NiceMatrixOptions{xdots/shorten = 0.6 em}
\begin{pNiceArray}[small]{rrrrrrrrrrrrcrr}
%\cline{2-6} \cline{8-13}
\?{}& 0 & \?{-1} & 0 & -1 & \?0 & \?{\cdots  \hspace*{0.05cm}} & 0 & -1 & 0 & 0 &\cdots  & \?0 & \\
\?{}&1 & \?{1}  & 0 & 0  & \?{-1} & \?{ \cdots  \hspace*{0.05cm}} & 0 & 0 & -1 & 0 & \cdots & \?0 & \\
\cline{2-3}
&0 & \?{0}  & 1 & 1  & \?{1} & \?{\cdots  \hspace*{0.05cm}} & 0 & 0 & 0 &  -1 & \cdots & \?0& \\
\cline{4-6}

& \vdots & \vdots & \vdots & \vdots & \vdots & \?{\ddots}  & \vdots & \vdots & \vdots & \vdots & \ddots & \?{\vdots} \\

% &\vdots \hspace*{0.05cm} & \vdots \hspace*{0.05cm}  & \vdots \hspace*{0.05cm} & \vdots \hspace*{0.05cm}  & \vdots \hspace*{0.05cm} & \?{\hspace{2cm}} & \vdots \hspace*{0.05cm} & \vdots \hspace*{0.05cm} & \vdots \hspace*{0.05cm} &   \vdots \hspace*{0.05cm} &  & \?{\vdots \hspace*{0.05cm}} & \\

&0 & 0  & 0 & 0  & 0 & \?{\cdots  \hspace*{0.05cm}} & 0 & 0 & 0 &  0 & \cdots & \?{-1} & \\
&0 & 0  & 0 & 0  & 0 & \?{\cdots  \hspace*{0.05cm}} & 1 & 1 & 1 &  1 & \cdots & \?{1} & \\
\cline{8-13}
%\CodeAfter \line[shorten = 0.5 em]{3-11}{5-13} 
%\line[shorten = 1 em]{3-6}{5-8}
\end{pNiceArray}.   
%\end{footnotesize}
\end{equation}
\end{comment}
\setcounter{MaxMatrixCols}{20}
\begin{equation} \label{matrix: Lm}
L_m \, = \,\begin{tiny} \begin{pmatrix}
    0 & -1 & \vrule & 0 & -1 & 0 & \vrule & \cdots & \vrule & 0 & -1 & 0 & 0 & \cdots & 0 \\
    1 & 1 & \vrule & 0 & 0 & -1 & \vrule & \cdots & \vrule & 0 & 0 & -1 & 0 & \cdots & 0 \\
    0 & 0 & \vrule & 1 & 1 & 1 & \vrule & \cdots & \vrule & 0 & 0 & 0 & -1 & \cdots & 0 \\
    \vdots & \vdots&  \vrule & \vdots& \vdots & \vdots &  \vrule & \cdots &  \vrule & \vdots & \vdots & \vdots & \vdots & \ddots & \vdots \\
    0 & 0 & \vrule & 0 & 0 & 0 & \vrule & \cdots & \vrule & 0 & 0 & 0 & 0 & \cdots & -1 \\
    0 & 0 & \vrule & 0 & 0 & 0 & \vrule & \cdots & \vrule & 1 & 1 & 1 & 1 & \cdots & 1
\end{pmatrix} \end{tiny}.\end{equation}

In words, $L_m$ is a $(m-2) \times \frac{m(m-3)}{2}$ matrix of rank $m-2$ that consists of $m-3$ rectangular blocks of sizes $(m-2) \times k$ for $k = 2, \ldots, m-2$. The top square $k\times k$ submatrices of these blocks have $-1$ on the first upper diagonal, and their $k$-th row consists of ones. The parameters $d, n$ are $d = m-3$ and $n = \frac{m(m-3)}{2}-1$.

In the spirit of previous sections, we translate \eqref{eq: CHY} into linear equations on the $(m-3)$-dimensional reciprocal linear space 
%$(m-3)$-fold 
${\cal R}_{L_m}$. We intersect ${\cal R}_{L_m}$ with the linear space $\mathbb{L}_s =\{ y \in \mathbb{P}^n \,:\, A_m \,  {\rm diag}(s) \, y = 0\}$, where $A_m$ consists of the last $m-3$ rows of $L_m$ and $s$ is a vector of Mandelstam invariants $s_{ij}$ in a suitable order. The degree $\deg {\cal R}_{L_m}$ gives the expected number of intersection points.  

\begin{proposition} \label{prop:recdegM0m}
    The reciprocal degree of ${\cal M}_{0,m}$ is $\mathrm{deg}\, \mathcal{R}_{L_m} = (m-3)(m-3)!$.
\end{proposition}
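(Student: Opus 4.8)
The plan is to compute $\deg \mathcal{R}_{L_m}$ combinatorially via Proposition \ref{prop:reciprocal_degree}: pick a convenient linear order $\omega$ on the ground set $\{0, \ldots, n\}$ indexing the pairs $(i,j)$, describe the broken-circuit matroid $M^\omega(L_m)$, and count its bases. Concretely, I would first identify the circuits of $M(L_m)$. Because $L_m$ encodes the hyperplanes $p_{ij} = 0$ with $p_{ij} \in \{x_i, x_i - 1, x_j - x_i\}$, the linear relations among the $\ell$'s come from the obvious identities $x_i - (x_i - 1) = 1$, $(x_j - x_i) - x_j + x_i = 0$, $(x_k - x_i) - (x_k - x_j) - (x_j - x_i) = 0$, etc. These are exactly the relations of the graphic-type matroid of the complete graph $K_m$ on vertex set corresponding to the $m$ marked points together with a ``point at infinity'' vertex, i.e. $M(L_m)$ is the graphic matroid of $K_m$ (this is classical: $\mathcal{M}_{0,m}$ is the complement of the braid-type arrangement and $L_m$ represents the cycle matroid of $K_m$). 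Its circuits are the cycles of $K_m$, and $\deg \mathcal{R}_{L_m}$ will then be the number of facets of the broken-circuit complex of $K_m$ for a chosen edge order.

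Next I would choose the edge order $\omega$ cleverly so that the broken-circuit complex becomes easy to enumerate. A standard choice: order the edges of $K_m$ so that all edges incident to a fixed vertex $v_0$ come last, or use the ordering that turns broken-circuit bases into a known family. With a good order, the no-broken-circuit bases of $K_m$ are in bijection with certain labelled forests/trees, and their count is governed by the evaluation of the chromatic (characteristic) polynomial: the number of NBC bases of a graph equals $|{\bar\chi}|$ evaluated appropriately, and in fact for the cycle matroid of $K_m$ this number is the absolute value of the coefficient structure of $\prod_{i=1}^{m-2}(t + i)$-type. The characteristic polynomial of the graphic arrangement of $K_m$ (after coning/deconing to match dimension $d = m-3$) is $\chi(t) = \prod_{i=2}^{m-2}(t - i)$ up to the usual conventions, so the total number of NBC sets of all sizes relates to $\prod(1+i)$, but what I actually want is the number of facets, i.e. the top Whitney number, which is $(-1)^{m-2}\chi(1)$ after the appropriate normalization. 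Carrying out this evaluation should yield $(m-3)(m-3)!$.

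The cleanest route is probably to avoid the full broken-circuit bookkeeping and instead argue that $\deg \mathcal{R}_{L_m}$, being a matroid invariant by \cite{Proudfoot2006}, equals $(-1)^{m-3} \cdot \bar\chi_{M(K_m)}(1)$ where $\bar\chi$ is the reduced characteristic polynomial of the (deconed) graphic arrangement of $K_m$; this arrangement is the braid arrangement $\mathcal{A}_{m-1}$, whose characteristic polynomial is $\prod_{i=1}^{m-2}(t-i)$. Deconing removes a factor $(t-1)$, leaving $\prod_{i=2}^{m-2}(t-i)$ of degree $m-3$; but to get the reciprocal degree I need the sum of absolute values of its coefficients, i.e. its value at $t = -1$ up to sign, which is $\prod_{i=2}^{m-2}(1+i) = \prod_{j=3}^{m-1} j = (m-1)!/2$ — this does not immediately give $(m-3)(m-3)!$, so the naive "value at a point" shortcut is wrong and one genuinely needs the number of facets of the broken circuit complex, which is the \emph{leading} combinatorial datum, not a single evaluation. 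So the real work, and the main obstacle, is the explicit enumeration: choosing the edge order and counting spanning trees of $K_m$ that contain no broken circuit. I expect this to reduce to a bijection between such trees and pairs (a chosen ``special'' edge slot, a permutation of $m-3$ letters), which is where the factor $(m-3)$ and the factorial $(m-3)!$ come from; verifying that bijection carefully is the crux. A sanity check: for $m = 4$ the formula gives $1 \cdot 1! = 1$, matching $\deg \mathcal{R}_{L_4} = 1$ (a point), and for $m = 5$ it gives $2 \cdot 2! = 4$; I would confirm $\deg \mathcal{R}_{L_5} = 4$ directly from the circuits of $K_5$ as a consistency check before committing to the general bijective argument.
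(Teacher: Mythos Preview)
Your plan has two concrete errors that would make it fail as written.

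First, the matroid identification is wrong. The matroid $M(L_m)$ is \emph{not} the cycle matroid of $K_m$. After gauge-fixing $\sigma_1=0,\sigma_2=1,\sigma_m=\infty$, all minors $p_{i,m}$ and $p_{1,2}$ become constants and are discarded; what remains are the $\frac{m(m-3)}{2}$ non-constant minors. In graphic terms this removes vertex $m$ and the edge $\{1,2\}$, so $M(L_m)$ is the cycle matroid of $K_{m-1}\setminus e$, a rank-$(m-2)$ matroid on $\frac{m(m-3)}{2}=\binom{m-1}{2}-1$ elements. Your edge counts should already have flagged this: $K_m$ has $\binom{m}{2}\neq\frac{m(m-3)}{2}$ edges.

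Second, the evaluation point is wrong. The number of facets of the broken circuit complex of $M$ (equivalently, the number of NBC bases) is $(-1)^{\mathrm{rk}\,M}\chi_M(0)$ by Whitney's broken circuit theorem, not an evaluation at $t=\pm 1$. With the correct graph this gives the answer cleanly: deletion--contraction yields
\[
P(K_{m-1}\setminus e,t)=P(K_{m-1},t)+P(K_{m-2},t)=t(t-1)\cdots(t-(m-3))\,(t-(m-3)),
\]
so $\chi_{M(L_m)}(t)=(t-1)(t-2)\cdots(t-(m-3))^2$ and $(-1)^{m-2}\chi_{M(L_m)}(0)=(m-3)(m-3)!$. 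Your sanity checks $m=4,5$ survive, and $m=6$ gives $18$ as in Example~\ref{ex:M06}.

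By contrast, the paper avoids the graphic interpretation entirely. It fixes the reverse order $\omega=(n,\ldots,1,0)$, uses a structural lemma on circuits of $M(L_m)$ (Lemma~\ref{lem: circuitsstruct}) to prove by induction that every $\omega$-broken circuit contains one of the two-element broken circuits $\{e_i,e_j\}$ or $\{e_j-e_i,e_k-e_i\}$, and then observes that $M^\omega(L_m)$ is represented by a block matrix of stacked identity blocks whose bases can be counted directly as $(m-3)(m-3)!$. Your corrected route via $\chi_{K_{m-1}\setminus e}(0)$ is shorter and more conceptual; the paper's route is self-contained and, as a by-product, produces the explicit minimal primes of the initial ideal $J$ needed for the homotopy algorithm.
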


To prove Proposition \ref{prop:recdegM0m}, we study the matroid $M(L_m)$ in more~detail.
\begin{lemma}\label{lem: circuitsstruct}
    Let $\mathcal{C}$ be a circuit of $M(L_m)$. Then $\mathcal{C}$ contains at most 2 elements from each of the $m-3$ block columns of $L_m$ as in \eqref{matrix: Lm}.
\end{lemma}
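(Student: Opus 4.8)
The plan is to analyze the structure of the matrix $L_m$ in \eqref{matrix: Lm} block by block. Fix one of the $m-3$ block columns, say the one of width $k$ (for some $k \in \{2, \ldots, m-2\}$), and consider the $k$ columns of $L_m$ lying in that block. First I would identify these columns explicitly: within the block of width $k$, the top $k \times k$ submatrix has $-1$ on the first upper diagonal and ones in its $k$-th row, while all other rows of $L_m$ vanish on these columns. Concretely, the $j$-th column of the block (for $j = 1, \ldots, k$) is supported on rows $j$ and $k$ of $L_m$, with a $-1$ in row $j$ (for $j < k$) and a $1$ in row $k$; the last column ($j = k$) is just the standard basis vector $e_k$ scaled appropriately. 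I would make this description precise and record it as the starting point.

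Next I would show that any three columns from a single block are linearly dependent: since all of them are supported on the $k$ rows indexed by $\{1, \ldots, k\}$, and in fact each lives in the $2$-dimensional space spanned by $\{e_j, e_k\}$ as $j$ varies — more carefully, all $k$ block columns lie in the span of $\{e_1 - e_k, e_2 - e_k, \ldots, e_{k-1} - e_k\}$ plus $e_k$, but a cleaner observation is that they all lie in the affine hyperplane "row-$k$-coordinate $=$ something"; the sharpest statement is that the $k$ columns of the block span a subspace of dimension exactly $2$ (spanned by $e_k$ and $e_1$, say, after noting the column differences are multiples of the $e_j$). Hence any $3$ of them are dependent, so no circuit can contain $3$ or more elements from one block. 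Then, since a circuit $\mathcal{C}$ is by definition a minimal dependent set, if it contained $3$ elements of some block, those $3$ would already form a dependent set, contradicting minimality — unless $\mathcal{C}$ is exactly those three elements, in which case $|\mathcal{C}| = 3 > \text{rank} + 1$ is fine but it still must be that its proper subsets are independent; I need the $3$-element subset to be dependent but its $2$-element subsets independent, which does happen, so I must argue more carefully.

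The main obstacle, then, is precisely this subtlety: showing a circuit cannot contain $\geq 3$ block elements requires ruling out the possibility that such a circuit is supported entirely (or mostly) inside one block. The fix is to compute the rank of the $k$ block columns exactly: I claim it is $2$ when $k \geq 2$ (the columns $e_1 - e_k, e_2 - e_k, \dots$ contribute rank $k-1$ from the upper part, but the row-$k$ ones-vector ties them together — wait, let me recount). On reflection the block columns are $v_j = -e_j + e_k$ for $j = 1, \dots, k-1$ and $v_k = e_k$; these span $\mathrm{span}(e_1, \dots, e_k)$, which has dimension $k$, so they are actually a basis of that coordinate subspace and are \emph{independent}. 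So the correct statement must be different: a circuit containing $\geq 3$ block elements must also contain elements from other blocks (to close up into a minimal dependency), but I claim even so it can contain at most $2$ from this block. To see this, suppose $\mathcal{C}$ is a circuit with block columns $v_{j_1}, v_{j_2}, v_{j_3}$ (three of them) among its members. Because the only nonzero rows these vectors touch outside their shared support are none — they are supported on rows $\{1, \dots, k\}$ and row $k$ is shared — any other circuit element touching row $k$ would be needed to balance; but I would argue via the circuit elimination / modular property of matroids, or more directly: in the unique (up to scaling) linear relation $\sum_{i \in \mathcal{C}} \alpha_i (\text{column}_i) = 0$, restricting to rows $\{1, \dots, k-1\}$ forces the coefficients of $v_{j_1}, v_{j_2}, v_{j_3}$ each to be determined by the single row $j_1, j_2, j_3$ respectively (each such row is hit only by the corresponding block column and possibly columns from later blocks — I must check the off-diagonal $-1$'s in later blocks), and then restricting to row $k$ gives one more equation, and a dimension count shows one of the three coefficients must vanish, contradicting that $\mathcal{C}$ is a circuit (all coefficients nonzero). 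I would carry out this coefficient bookkeeping carefully — that is where the real work lies — using the explicit sparsity pattern of $L_m$ read off from \eqref{matrix: Lm}, and conclude that at most $2$ elements of $\mathcal{C}$ come from each block.
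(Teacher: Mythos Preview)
Your final strategy---restricting the circuit relation to individual rows and invoking a ``dimension count''---does not close the argument. Write the columns of the width-$k$ block as $e_k$ and $e_k - e_j$ for $j = 1, \ldots, k-1$, and suppose $\mathcal{C}$ contains three of them with coefficients $\alpha_a, \alpha_b, \alpha_c$ in the unique dependency $\sum_{v \in \mathcal{C}} \alpha_v\, v = 0$. Row $k$ of this equation reads $\alpha_a + \alpha_b + \alpha_c = \sum_{e_\ell - e_k \in \mathcal{C}} \alpha_{\ell,k}$; row $a$ (for $a \geq 1$) reads $\alpha_a = \sum_{e_a - e_s \in \mathcal{C}} \alpha_{a,s} - \sum_{\ell \neq k,\; e_\ell - e_a \in \mathcal{C}} \alpha_{\ell,a}$, and similarly for rows $b$ and $c$. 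None of these relations forces any $\alpha$ to vanish; they merely express each block-$k$ coefficient in terms of the rest of the circuit. There is no dimension obstruction to exploit, because (as you correctly noted) the block columns are linearly independent, so a one-shot row restriction cannot produce the contradiction you are after.

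The paper's argument is in the same spirit---it also reasons row by row---but it is \emph{iterative} rather than a single count. Starting from three block-$k$ vectors, one must cancel the $-1$ entries they produce in rows $a,b,c$; the paper shows that the obvious candidates for doing so (vectors supported only on rows already in play) would create a proper dependent subset of $\mathcal{C}$, violating minimality. Hence one is forced to include further vectors that hit at least two \emph{new} row indices. Repeating this step exhausts the $m-2$ rows of $L_m$ and yields the contradiction. The ``real work'' you anticipated is precisely this repeated exclusion of small sub-circuits at every stage; your proposed bookkeeping never engages with minimality of $\mathcal{C}$, which is where the content lies. (If you want a shorter route: observe that the columns of $L_m$ realize the graphic matroid of $K_{m-1}$ on vertex set $\{0,1,\ldots,m-2\}$, with the width-$k$ block consisting of the edges from vertex $k$ to the smaller-indexed vertices; a circuit is then a simple cycle, which meets vertex $k$ in at most two edges.)
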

\begin{proof}
Assume first that $\mathcal{C}$ contains at least three vectors of the form $\omega_0 = e_k$, ${\omega_i = e_k - e_i}$, and $\omega_j = e_k - e_j$ from the $k$-th block, where $i < j$. Since $\mathcal{C}$ is a circuit, $\mathcal{C}\setminus \{\omega_h\}$ is independent for any $h = 0, i, j$. Additionally, $\mathcal{C}$ must include other vectors with non-zero entries in rows $i$ and $j$ that are different from~$\omega_i$,$\omega_j$.
The triples ${\{\omega_i, \omega_j, e_j - e_i\}}$ and $\{\omega_{0}, \omega_h, e_h\}$ are 3-circuits of $M(L_m)$, so no other vector in~$\mathcal{C}$ can be of the form $e_j - e_i$ or $e_h$, where $h = i, j$. Therefore, $\mathcal{C}$ must include at least two other vectors of the form $\pm(e_j - e_{\ell_1}), \pm(e_i - e_{r_1})$, where $\ell_1 \neq r_1 \notin \{i, j, k\}$  to prevent a 4-circuit $\{\pm(e_j - e_{\ell_1}), \pm(e_i - e_{r_1}), \omega_i, \omega_j\}$. However, the span of the vectors in proper subsets of ${\cal C}$ now contains vectors of the form ${\pm(e_p - e_q)}$ for any pair $p, q \in \{i, j, k, \ell_1, r_1\}$.
By a similar argument, to eliminate non-zero entries in rows $\ell_1$ and $r_1$, we need at least two more vectors ${\pm(e_{\ell_1} - e_{\ell_2})}$, ${\pm(e_{r_1} - e_{r_2})}$, where $\ell_2 \neq r_2$ and $\ell_2, r_2 \notin \{i, j, k, \ell_1, r_1\}$. Iterating this process leads to a contradiction: the matrix $L_m$ has finitely many rows, so at some step~$t$, either $\ell_t$ or $r_t$ belongs to $\{i, j, k, \ell_1, r_1, \ldots, \ell_{t-1}, r_{t-1}\}$, and a proper subset of $\mathcal{C}$ is linearly dependent.

If $\omega_0 = e_k - e_s$ for some $s < i$, then to cancel the non-zero entries in rows $s, i, j$ we need to add three distinct vectors, and at most one of them can be of the form $e_h$, while the other two must have non-zero entries in new distinct rows. Thus, at every step, we introduce at least two additional rows of the matrix $L_m$, which again leads us to a contradiction.
\end{proof}
\begin{comment}
 
    Assume that we have at least 3 vectors $\omega_s = e_k - e_s$, $\omega_i = e_k - e_i$ and $\omega_j= e_k - e_j$ from the $k$-th block, with $s < i < j$. Since $\mathcal{C}$ is a minimal dependent set it contains vectors with a non-zero entry in rows $s,i,j$ different from $\omega_s,\omega_i,\omega_j$. Moreover, $\mathcal{C}\setminus\{\omega_h\}$ is independent for every $h\in \{s,i,j\}$, so $\mathcal{C}$ does not contain any vector in $\{\omega_i-\omega_s, \omega_j-\omega_i, \omega_j-\omega_s \} $ and it has no pair of columns with the same non-zero entries. It also does not contain any pair of columns $\{e_h- e_\ell, e_h-e_{\ell'}\}$ with $\ell, \ell' \in \{s,i,j\}$. Hence, for $\mathcal{C}$ to be a circuit, it has to contain at least three more elements with a non-zero entry in rows $i,j$ or $s$ respectively, and at least two of these three elements have another nonzero entry in distinct rows $\ell, r \notin \{s,i,j,k\}$. Thus, the span of the columns in ${\cal C}$ contains $e_r-e_\ell, e_k-e_r, e_k-e_\ell$ and $\pm(e_r-e_s), \pm(e_j-e_r), \pm(e_i-e_r), \pm(e_\ell-e_s),\pm(e_j-e_\ell),\pm(e_i-e_\ell)$. We can now apply the same argument to see that, in order to cancel the non-zero entries in rows $\ell, r$, we must find more columns in ${\cal C}$ with nonzero entries in the complement of $\{\ell,r,s,i,j,k\}$. Iterating this process leads to a contradiction.   
\end{comment}

\begin{proof}[Proof of Proposition \ref{prop:recdegM0m}]
  By Proposition \ref{prop:reciprocal_degree}, the reciprocal degree $\mathrm{deg}\mathcal{R}_{L_m}$ is equal to the number of bases of the matroid $M^{\omega}(L_m)$, where $\omega$ is \emph{any} linear order on the ground set $\{0, \ldots, n\}$. Let us choose the order $\omega = (n, \ldots, 1,0)$. That is, for any subset of the ground set, the largest index is $\omega$-minimal.
  
  We begin by describing the $\omega$-broken circuits of the matroid $M(L_m)$. The 3-circuits of $M(L_m)$ are $\{e_j - e_i, e_k - e_i, e_k - e_j\}$ and $\{e_i, e_j, e_j - e_i\}$ for ${i < j < k}$. The corresponding $\omega$-broken circuits $\{e_j - e_i, e_k - e_i\}$ and $\{e_i, e_j\}$ for ${i < j < k}$ are 2-circuits of $M^{\omega}(L_m)$. %In simple terms, these are either pairs of standard basis vectors from the matrix $L_m$ or pairs of columns that have $-1$ in the same row. 
We prove by induction that any other $\omega$-broken circuit contains a 2-broken circuit. Assume the claim holds for all $(r-1)$-broken circuits. A circuit of columns $v_{i_1}, \ldots, v_{i_{r+1}}$ of $L_m$ with $i_1 < \ldots < i_{r+1}$ gives
\begin{equation}\label{eq: lincomb_circuit}
    v_{i_{r+1}} = \lambda_1 v_{i_1} + \ldots + \lambda_{r} v_{i_{r}}
\end{equation}
with $\mathcal{BC} = \{v_{i_1}, \ldots, v_{i_{r}}\}$ as an $\omega$-broken circuit. Suppose the last vector $v_{i_{r+1}}$ lies in the $k$-th block, then its $k$-th entry is 1. This can only be cancelled if there is another vector from the $k$-th block. By Lemma \ref{lem: circuitsstruct}, this implies that there are exactly 2 vectors $v_{i_{r+1}}, v_{i_{r}}$ from the $k$-th block. Thus, $v_{i_r}$ appears in \eqref{eq: lincomb_circuit} with the coefficient $\lambda_r = 1$. Moreover, $v_{i_{r+1}} \neq e_k$, since it cannot be the first column of the block and we have $v_{i_{r+1}} = e_k - e_j$ for some $j<k$. 

We consider two cases. First, if $v_{i_r} = e_k - e_s$ for $s < j < k$, then the vector $e_j - e_s = -(v_{i_{r+1}} - v_{i_{r}})$ lies in the span of $\mathcal{BC}\setminus \{v_{i_{r}}\}$. Therefore there is a circuit $\mathcal{C}$ of size $|\mathcal{C}| \leq r$ in $\mathcal{BC}\setminus \{v_{i_{r}}\} \cup \{e_j - e_s\}$. By the induction hypothesis, the corresponding $\omega$-broken circuit contains some $2$-broken circuit. Then the whole set ${\mathcal{BC}\setminus \{v_{i_{r}}\} \cup \{e_j - e_s\}}$ contains this $2$-broken circuit. If this has the form $\{e_i, e_j\}$, then it lies in the set ${\mathcal{BC}\setminus \{v_{i_{r}}\}}$ and thus in $\mathcal{BC}$. If it has the form $\{e_q - e_p, e_t - e_p\}$, then either it lies in ${\mathcal{BC}\setminus \{v_{i_{r}}\}}$ or one of its elements is $e_j - e_s$. But then $p = s$ and $\mathcal{BC}$ contains a 2-broken circuit $\{e_q - e_s, v_{i_{r}} = e_k - e_s\}$.

In the second case when $v_{i_r} = e_k$, we apply a similar argument for the dependent set $\mathcal{BC}\setminus \{v_{i_{r}}\} \cup \{e_j\}$.

\begin{comment}
we take a 3-circuit of the form $\mathcal{C}_2 = \{e_j - e_s, v_{i_{l}} = e_k - e_s, v_{i_{r+1}} = e_k - e_j\}$. Since $v_{i_{r+1}} \in \mathcal{C}_1 \cap \mathcal{C}_2$, the set $\mathcal{C}_1 \cup \mathcal{C}_2 \setminus \{ v_{i_{r+1}}\} = \{\}$ 

  To prove this claim, assume that the columns $v_1, \ldots, v_k$ of $L_m$ form a circuit of the matroid $M(L_m)$. Then, $v_k = \lambda_1 v_1 + \ldots + \lambda_{k-1} v_{k-1}$, where all $\lambda$ coefficients are non-zero, and $v_1, \ldots, v_{k-1}$ form an $\omega$-broken circuit. If none of the vectors $v_1, \ldots, v_{k-1}$ are of the form $e_j - e_i$, then they must all be standard basis vectors and thus contain a 2-broken circuit of the form $\{e_i, e_j\}$. Otherwise, there must be some $v_l = e_j - e_i$ for $l < k$. By the structure of the matrix $L_m$, the $i$-th entry of the vector $v_k$ is zero \textcolor{blue}{I don't see why}. Since $v_k$ is a linear combination of $v_1, \ldots, v_{k-1}$, we must have $v_r = e_k - e_i$ for $r < k$ for some $r \neq l$.
\end{comment}
  The matroid $M^{\omega}(L_m)$ is defined by circuits that correspond to the indices of the parallel pairs $\{e_j - e_i, e_k-e_i\}$ and $\{e_i, e_j\}$ for $i < j <k$. This matroid can be represented by a matrix $L^\omega= (\begin{smallmatrix}
      e_1 & e_2& \vrule & e_1 & e_2 & e_3 & \vrule & \cdots & \vrule & e_1 & \cdots & e_{m-2} 
  \end{smallmatrix})$
  %
  %\NiceMatrixOptions{xdots/shorten = 0.6 em}
  %\begin{footnotesize}
  %\begin{pNiceArray}{c|cc|ccc|c|ccc|c}
 %& e_1 & e_2 & e_1 & e_2 & e_3 & \cdots & e_1 & \cdots & e_{m-2} &
 % \end{pNiceArray}.
 % \end{footnotesize}
 % $
with the same block structure as in \eqref{matrix: Lm}, but its blocks are identity matrices. The bases of $M^{\omega}(L_m)$ consist of standard basis sets of the form $\{e_1, \ldots, e_{m-2}\}$. There are $(m-3)(m-3)!$ ways to select such a basis from the columns of $L^\omega$.
\end{proof}

As observed above, the ML degree of ${\cal M}_{0,m}$ is $(m-3)!$. Proposition \ref{prop:recdegM0m} says that its reciprocal degree is $(m-3)$ times larger. This means that for generic $s$, there are $(m-4)(m-3)!$ solutions on the boundary ${\cal R}_{L_m} \setminus {\cal R}_{L_m}^\circ$. We study these boundary solutions using Theorem \ref{thm:flats}. We say that a flat of $M(L_m)$ is \emph{of type (ii)} if it satisfies the condition \rm{(ii)} from Theorem~\ref{thm:flats}. A submatrix of $L_m$ is said to be \emph{equivalent to} $L_{m-r}$ if it is equal to $ L_{m-r} $ after deleting $r$ zero rows.

\begin{proposition}\label{prop: number_of_deg_flats}
   For each $ r = 0, \ldots, m-4 $, there are exactly $ \binom{m-3}{r} $ type \rm{(ii)} flats of~$ M(L_m) $ whose corresponding submatrix of $ L_m $ is equivalent to $L_{m-r}$. 
\end{proposition}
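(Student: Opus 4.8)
The plan is to identify $M(L_m)$ with a graphic matroid and reduce the statement to a count of graph partitions. Reading off \eqref{matrix: Lm} and setting $e_0 := 0$, the columns of $L_m$ are exactly the vectors $e_k-e_i$ over all pairs $\{i,k\}$ with $0\le i<k\le m-2$ and $\{i,k\}\ne\{0,1\}$. Hence $M(L_m)$ is the graphic matroid of the graph $G_m$ obtained from the complete graph on the vertex set $\{0,1,\dots,m-2\}$ by deleting the single edge $\{0,1\}$. Under this identification, for $i=1,\dots,m-2$ the $i$-th row of $L_m$ is the coordinate attached to vertex $i$, vertex $0$ is the grounded vertex carrying no row, and $A_m$ consists of the rows attached to the vertices $2,\dots,m-2$. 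In particular, deleting the first row of a submatrix $L_I$ amounts to identifying vertex $1$ with the grounded vertex $0$.

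First I would use the standard description of flats of a graphic matroid: the flats of $M(G_m)$ are the sets $I_\pi=\{\,\text{edges of }G_m\text{ with both endpoints in one block of }\pi\,\}$, where $\pi$ ranges over the partitions of $\{0,\dots,m-2\}$ all of whose blocks induce connected subgraphs of $G_m$, and ${\rm rank}(L_{I_\pi})=(m-1)-|\pi|$. Since $G_m$ is complete minus one edge, the only subset inducing a disconnected subgraph is $\{0,1\}$ itself, so the flats of $M(L_m)$ are precisely the $I_\pi$ with no block equal to $\{0,1\}$. For such $\pi$, the zero rows of $L_{I_\pi}$ are exactly the rows of the singleton blocks contained in $\{1,\dots,m-2\}$; after deleting them, $L_{I_\pi}$ is block diagonal (up to reordering) with one block per non-singleton part $B$ of $\pi$, that block realising the graphic matroid of $G_m[B]$, which is $K_B$ when $\{0,1\}\not\subseteq B$ and $K_B\setminus\{0,1\}$ when $\{0,1\}\subseteq B$.

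Next I would pin down which $I_\pi$ are equivalent to $L_{m-r}$. As $L_{m-r}$ has no zero rows, $L_{I_\pi}$ must have exactly $r$ of them (so exactly $r$ singleton blocks of $\pi$ lie in $\{1,\dots,m-2\}$), it must have rank $m-r-2$, and it must have $\tfrac{(m-r)(m-r-3)}{2}=\binom{m-r-1}{2}-1$ columns. Plugging these into the identities ${\rm rank}(L_{I_\pi})=\sum_B(|B|-1)$ and $\sum_B|B|+(\text{number of singleton blocks})=m-1$ (sums over non-singleton blocks $B$), together with $|I_\pi|=\sum_B|E(G_m[B])|$ and $|E(G_m[B])|=\binom{|B|}{2}$ or $\binom{|B|}{2}-1$ according as $\{0,1\}\not\subseteq B$ or $\{0,1\}\subseteq B$, forces $\pi$ to have a unique non-singleton block $B^*$, with $\{0,1\}\subseteq B^*$ and $|B^*|=m-r-1$ (and $\{0\}$ itself not a singleton). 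A direct inspection of the column and row orderings of $L_m$ versus $L_{m-r}$ — columns grouped by larger endpoint, each group listing $\{0,k\},\{1,k\},\{2,k\},\dots$, rows in increasing vertex order — then confirms that removing the $r$ zero rows of $L_{I_\pi}$ yields $L_{m-r}$ on the nose; since vertex $1\in B^*$, its row is never deleted, so the distinguished first row survives.

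Finally I would check condition (ii) and count. For such a $\pi$ we have ${\rm rank}(L_{I_\pi})=|B^*|-1=m-r-2$, while deleting the vertex-$1$ row identifies vertices $0$ and $1$ inside $G_m[B^*]=K_{B^*}\setminus\{0,1\}$, giving a connected (multi)graph on $|B^*|-1$ vertices, so ${\rm rank}(A_{I_\pi}^\top)=|B^*|-2=m-r-3={\rm rank}(L_{I_\pi})-1$, which is condition (ii). Such a flat is determined by the choice of $B^*$, i.e.\ by choosing its $|B^*|-2=m-r-3$ vertices other than $0,1$ among the $m-3$ vertices $\{2,\dots,m-2\}$, giving $\binom{m-3}{m-r-3}=\binom{m-3}{r}$ flats, as claimed. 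I expect the most delicate point to be the forcing argument of the third paragraph — ruling out several non-singleton blocks and the complete-graph case $\{0,1\}\not\subseteq B^*$, and matching ordering conventions so that ``having the matroid of $L_{m-r}$'' upgrades to ``equal to $L_{m-r}$ after deleting $r$ zero rows''; by contrast the graphic-matroid dictionary and the rank computation for (ii) are routine.
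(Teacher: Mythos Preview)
Your proof is correct and essentially parallels the paper's, but recast in graphic-matroid language: the paper's ``choose $m-r-2$ of the variables $x_0,x_1,\ldots,x_{m-3}$'' is exactly your ``choose a block $B^*\ni 0$ of size $m-r-1$'' under the dictionary $x_i\leftrightarrow$ vertex $i{+}1$ (with vertex $0$ grounded), and the paper's test ``$x_0$ among the variables $\Leftrightarrow$ type (ii)'' is your ``$1\in B^*$''. The one substantive difference is that the paper simply asserts that any submatrix equivalent to $L_{m-r}$ arises from such a variable subset, whereas your rank/zero-row/column count in the third paragraph actually forces $s_0+t=1$ and then $\{0,1\}\subseteq B^*$, closing that gap cleanly; conversely, the paper's linear-form viewpoint makes the ``equal on the nose after deleting zero rows'' verification immediate, which you leave to a (correct but informal) inspection of orderings.
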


\begin{example}\label{ex: submatricesLm}
The matrix $ L_6 $ contains exactly $ \binom{6-3}{1} = 3 $ submatrices that are equivalent to $ L_5 $, such that removing the first row causes the rank to drop. These submatrices are highlighted in yellow and correspond to the flats $ \{0,1,2,3,4\} $, $ \{2,3,5,6,8\} $, and $ \{0,1,5,6,7\} $ of $M(L_6)$, respectively.
    \begin{equation*}
    \small
 \begin{pNiceArray}[small]{rrrrrrrrr}
\CodeBefore
 \cellcolor[HTML, opacity = 0.7]{FFFF88}{1-1, 1-2, 1-3, 1-4, 1-5,  2-1,2-2,2-3, 2-4, 2-5, 3-1, 3-2, 3-3, 3-4, 3-5}
\Body
 {\bf \color{red}0} & {\bf \color{red}-1} & {\bf \color{teal}0} & {\bf \color{teal}-1} & 0 & 0 & -1 & 0 & 0 \\
{\bf \color{red}1} & {\bf \color{red}1}  & 0 & 0  & -1 & 0 & 0 & -1 & 0 \\
0 & 0  & {\bf \color{teal}1} & {\bf \color{teal}1}  & 1 & 0 & 0 & 0 &  -1 \\
0 & 0   & 0  & 0   & 0 & 1 & 1 & 1 & 1  
\end{pNiceArray} ,   \;
\begin{pNiceArray}[small]{rrrrrrrrr}
\CodeBefore
 \cellcolor[HTML, opacity = 0.7]{FFFF88}{1-3, 1-4, 1-6, 1-7, 1-9, 3-9, 3-3, 3-4, 3-6, 3-7, 3-9, 4-3, 4-4, 4-6, 4-7, 4-9}
\Body
0 & -1 & {\bf \color{teal}0} & {\bf \color{teal}-1} & 0 & {\bf \color{blue}0} & {\bf \color{blue}-1} & 0 & 0 \\
1 & 1  & 0 & 0  & -1 & 0 & 0 & -1 & 0 \\
0 & 0  & {\bf \color{teal}1} & {\bf \color{teal}1}  & 1 & 0 & 0 & 0 &  -1 \\
0 & 0   & 0  & 0   & 0 & {\bf \color{blue}1} & {\bf \color{blue}1} & 1 & 1  \\
\end{pNiceArray},  \;
\begin{pNiceArray}[small]{rrrrrrrrr}
\CodeBefore
 \cellcolor[HTML, opacity = 0.7]{FFFF88}{1-1, 1-2, 1-6, 1-7, 1-8, 2-1, 2-2, 2-6, 2-7, 2-8, 4-1, 4-2, 4-6, 4-7, 4-8}
\Body
{\bf \color{red}0} & {\bf \color{red}-1} & 0 & -1 & 0 & {\bf \color{blue}0} & {\bf \color{blue}-1} & 0 & 0 \\
{\bf \color{red}1} & {\bf \color{red}1}  & 0 & 0  & -1 & 0 & 0 & -1 & 0 \\
0 & 0  & 1 & 1  & 1 & 0 & 0 & 0 &  -1 \\
0 & 0   & 0  & 0   & 0 & {\bf \color{blue}1} & {\bf \color{blue}1} & 1 & 1  \\
\end{pNiceArray}.  
\end{equation*}
In addition, $ L_6 $ contains exactly $ \binom{6-3}{2} = 3 $ distinct submatrices equivalent to $ L_4 $, such that removing the first row causes the rank to drop. These submatrices are bold and colored in red, green, and blue. Their flats are $ \{0,1\} $, $ \{2,3\} $, $ \{5,6\} $.
\end{example}

\begin{proof}[Proof of Proposition \ref{prop: number_of_deg_flats}]
   %We begin by observing that the framed square matrices of $ L_m $, as in \eqref{matrix: Lm}, maintain their structure after deleting rows and columns indexed by the same~set.
      
  We want to construct a submatrix $K$ of $L_m $ equivalent to $ L_{m-r}$. Selecting the columns of $K$ means selecting all linear functions among 
  \begin{equation} \label{eq:linforms}  \{ x_i \, :\, 1 \leq i \leq n \} \cup \{ x_j-x_i \,:\, 0 \leq i < j \leq m-3 \},\end{equation}
   which involve only $m-r-2$ of the variables $x_0=1,x_1,\ldots, x_{m-3}$. Notice that each such submatrix $K$ is a flat of $M(L_m)$ of rank $m-r-2$. Indeed, the rank is that of $L_{m-r}$, and any column not contained in $K$ is a linear form among~\eqref{eq:linforms} which involves a new variable, so adding it to $K$ would increase the rank.  We claim that among these flats, the only flats of type (ii) are those containing the variable $x_0=1$. Recall that the flat $K$ is of type (ii) if deleting the first row decreases its rank. In terms of the linear forms \eqref{eq:linforms}, deleting the first row corresponds to setting $x_0 =0$. If $x_0$ is not among the variables in our flat, then this clearly does not change the rank, so the flat is of type (i). If $x_0$ is among the variables, then after setting $x_0 =0$ the rank is at most $m-r-3< m-r-2$.

Among the flats $K$ described above, precisely $ \binom{m-3}{m-r-3} = \binom{m-3}{r}$ many involve $x_0$. We have shown that these are the type {\rm (ii)} flats equivalent to $ L_{m-r} $.
\end{proof}

Below, we write $I_r(W), W\in \binom{[m-3]}{m-r-3}$ for the flats of $M(L_m)$ whose matrices are equivalent to $L_{m-r}$. More precisely, $W$ is an $(m-r-3)$-element subset of $\{1, \ldots,m-3\}$ and $I_r(W)$ is the rank-$(|W|+1)$ flat consisting of the linear forms 
\[ \{ x_i \,:\,i \in W \} \cup \{ x_j-x_i \, :\, j \in W, i \in \{0\} \cup W, i < j\}.\]

\begin{comment}

\begin{lemma} \label{lem: sublattice_flats}
The type (ii) flats $I_r(W)$  for $0 \leq r \leq m-4$ and $W \in \binom{[m-3]}{m-r-3}$ form a sublattice inside the lattice of flats of~$M(L_m)$. For every $r$, the number of chains in that sublattice connecting $I_0([m-3]) = \{ 0, 1, \ldots,n\}$ with $I_r(W)$ equals $r!$.
\end{lemma}
\begin{proof}
   We can easily verify that the intersection of two of our flats is again one of our flats: $I_r(W) \cap I_{r'}(W') = I_{m-3-|W\cap W'|}(W \cap W')$.
    In other words, these flats form a sublattice inside the lattice of flats of $ M(L_m) $.

    By Proposition \ref{prop: number_of_deg_flats}, there are exactly $ \binom{m-3}{r} $ distinct  flats $I_r(W)$ of type {\rm (ii)}. On the other hand, $ I_0([m-3])$ contains $ m-3 $ type {\rm (ii)} flats $ I_1(W)$, where $|W| = m-4$. Each of these, in turn, contains $ m-4 $ type {\rm (ii)} flats $ I_2(W)$, and so on. 
In total, we obtain $ (m-3) \cdots (m-2-r)$  flats  $I_r(W)$ with $|W| =m-3-r $, counted with repetitions. By comparing the numbers $ \binom{m-3}{r} $ and $ (m-3) \cdots (m-2-r) $, we find that each flat~$ I_r(W) $ is connected to $I_0([m-3])$ by $r!$ chains.
\end{proof}
\end{comment}

\begin{example} \label{ex:sublattice}
Since $I_r(W) \cap I_{r'}(W') = I_{m-3-|W\cap W'|}(W \cap W')$, the type (ii) flats $I_r(W)$ form a sublattice inside the lattice of flats of $M(L_m)$. This is illustrated for $M(L_6)$ in Figure~\ref{fig:sublattice_of_flats}. The cover relations can also be inferred from Example~\ref{ex: submatricesLm}. For instance, the submatrix $ L_4 $ colored in red corresponds to the flat~$ \{0,1\} $. It appears as a submatrix of the $L_5$ associated with the flat $ \{0,1,2,3,4\} $, and it also appears as a submatrix of the $ L_5 $ corresponding to the flat $ \{0,1,5,6,7\} $.
\end{example}
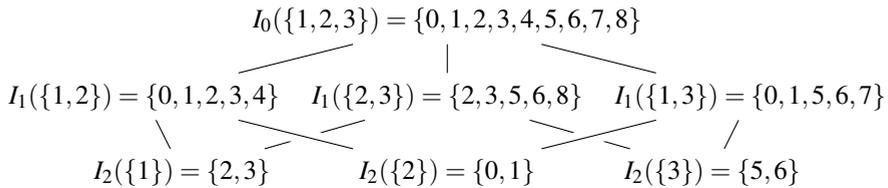
\begin{figure}
    \centering
    \begin{tikzpicture}[scale = 0.5]
  \node (max) at (0,4) {\small$I_0(\{1,2,3\})=\{0,1,2,3,4,5,6,7,8\}$};
  \node (a) at (-8,2) {\small$ I_1(\{1,2\}) = \{0,1,2,3,4\}$};
  \node (b) at (0,2) {\small$ I_1(\{2,3\}) =\{2,3,5,6,8\}$};
  \node (c) at (8,2) {\small$I_1(\{1,3\}) =\{0,1,5,6,7\}$};
  \node (d) at (-7,0) {\small$I_2(\{1\}) =\{2,3\}$};
  \node (e) at (0,0) {\small$I_2(\{2\}) =\{0,1\}$};
  \node (f) at (7,0) {\small$I_2(\{3\}) =\{5,6\}$};
  \draw  (d) -- (a) -- (max) -- (b) -- (f)
  (e)  (f) -- (c) -- (max)
  (d) -- (b);
  \draw[preaction={draw=white, -,line width=6pt}] (a) -- (e) -- (c);
\end{tikzpicture}
    \caption{Hasse diagram of type {\rm (ii)} flats $I_r(W)$ of $M(L_6)$.}
    \label{fig:sublattice_of_flats}
\end{figure}
By Theorem \ref{thm:flats} and the fact that $X_{I_r(W)} \simeq {\cal M}_{0,m-3-r}$, each flat $ I_r(W)$ contributes $(m-3-r)!$ points to the intersection $\mathbb{L}_s \cap {\cal R}_{L_m}$. These points lie in the open stratum ${\cal R}_{(L_m)_{I_r(W)}}^\circ$ corresponding to the flat $ I_r(W)$. They are the solutions to the equations ${A_{I_r(W)}^{\top} \mathrm{diag}(s) y_{I_r(W)} = 0}$, $ y \in {\cal R}_{(L_m)_{I_r(W)}}^\circ $. By the definition of the flats~$ I_{r}(W) $, this system is equivalent to the equations \eqref{eq:scatteringeqs} of the arrangement of $L_{m-r}$. These are the scattering equations of ${\cal M}_{0,m-r}$, with particles indexed by $1,2,m$ and $W$. We predict the intersection multiplicity of the solutions.
\begin{conjecture} \label{conj:multiplicity}
    The multiplicity of $\mathbb{L}_s \cap {\cal R}_{L_m}$ at each of the ${(m-3-r)!}$ points in $\mathbb{L}_s \cap {\cal R}_{(L_m)_{I_r(W)}}^\circ$ equals $r!$. 
\end{conjecture}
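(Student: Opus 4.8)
The plan is to verify the conjecture by matching the total intersection multiplicity against the reciprocal degree, and then using a symmetry/stratification argument to distribute it. By Proposition~\ref{prop:recdegM0m} we have $\deg {\cal R}_{L_m} = (m-3)(m-3)!$, so a generic linear space $\mathbb{L}_s$ of dimension $n-d$ meets ${\cal R}_{L_m}$ in exactly $(m-3)(m-3)!$ points counted with multiplicity. On the other hand, the decomposition \eqref{eq:sqcup} together with Theorem~\ref{thm:flats} and Proposition~\ref{prop: number_of_deg_flats} tells us that set-theoretically
\[ \mathbb{L}_s \cap {\cal R}_{L_m} \, = \, \bigsqcup_{r=0}^{m-4} \ \bigsqcup_{W \in \binom{[m-3]}{m-r-3}} \ \bigl( \mathbb{L}_s \cap {\cal R}_{(L_m)_{I_r(W)}}^\circ \bigr), \]
with each inner set consisting of $(m-3-r)!$ reduced points of the corresponding very affine variety $X_{I_r(W)} \simeq {\cal M}_{0,m-3-r}$. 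If we knew the multiplicity along the stratum indexed by $I_r(W)$ were a constant $\mu_r$ depending only on $r$ (which follows from the $S_{m-3-r}$-type symmetry permuting the particles indexing that sub-${\cal M}_{0,\bullet}$, plus the transitivity of the monodromy on the $(m-3-r)!$ solutions guaranteed by Theorem~\ref{thm:orlik-terao} / Theorem~\ref{thm:varchenko}), then comparing the two counts gives the single linear relation
\[ \sum_{r=0}^{m-4} \binom{m-3}{r} \, \mu_r \, (m-3-r)! \, = \, (m-3)\,(m-3)! . \]
Substituting the conjectural value $\mu_r = r!$ makes the left side $\sum_r \binom{m-3}{r} r!\,(m-3-r)! = \sum_r (m-3)!/\binom{m-3}{r}\cdot\binom{m-3}{r}$... more cleanly, $\binom{m-3}{r}r!(m-3-r)! = (m-3)!$, so the sum is $(m-3)\cdot(m-3)!$, confirming consistency. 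But consistency of one scalar identity is far from a proof, so the real work is local.

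The approach I would actually pursue is to compute the multiplicity directly at a point $p$ lying in the stratum ${\cal R}_{(L_m)_{I_r(W)}}^\circ$ via the Proudfoot--Speyer degeneration. First I would localize: by Proposition~\ref{prop:stratification} the only circuit equations $f_C$ that do not vanish identically in a neighborhood of the torus orbit $U_{I_r(W)}$ are those with $C \subseteq I_r(W)$, so near $p$ the variety ${\cal R}_{L_m}$ is cut out inside the coordinate subspace $\{y_i = 0 : i \notin I_r(W)\}$ by the equations of ${\cal R}_{(L_m)_{I_r(W)}}$, \emph{together with} the ``transversal'' circuit equations $f_C$ with $C \not\subseteq I_r(W)$ which, restricted to $y_i = 0$ for $i\notin I_r(W)$, become conditions relating the normal coordinates $y_i$, $i \notin I_r(W)$, to the tangential ones. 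The multiplicity of $\mathbb{L}_s \cap {\cal R}_{L_m}$ at $p$ is then the length of the local ring of this intersection. Because $\mathbb{L}_s$ is generic and $p$ is a reduced point of $\mathbb{L}_s \cap {\cal R}_{(L_m)_{I_r(W)}}^\circ$ (Theorem~\ref{thm:flats}(ii) gives it reduced \emph{in} the stratum), the tangential directions contribute multiplicity $1$, and the entire excess multiplicity comes from how the $r$ normal coordinates $y_i$, $i\notin I_r(W)$, are forced to $0$ by the transversal circuit equations and the linear forms $A_m^\top {\rm diag}(s) y$. I would organize the normal directions by the blocks of $L_m$ ``removed'' in passing from $I_0([m-3])$ to $I_r(W)$: combinatorially (Example~\ref{ex:sublattice}, Figure~\ref{fig:sublattice_of_flats}) these correspond to a maximal chain $I_0([m-3]) \gtrdot \cdots \gtrdot I_r(W)$ of length $r$ in the sublattice, and I expect each covering step to contribute a factor, with the product over a single chain being... here is where I would guess the $r!$ emerges: it is the number of maximal chains from $I_0([m-3])$ down to $I_r(W)$ in that sublattice (each permutation of the $r$ deleted blocks gives a chain), and the multiplicity should be the sum of the "weights" of these chains, each weight being $1$.

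Concretely, the cleanest route is probably an inductive deformation-to-the-normal-cone / iterated residue argument: realize $\mathbb{L}_s \cap {\cal R}_{L_m}$ as a flat limit and show that as one degenerates one block at a time, a cluster of $r!$ points (solutions of scattering equations on ${\cal M}_{0,m}$) collapses onto one point of the stratum ${\cal R}_{(L_m)_{I_r(W)}}^\circ$, with the $r!$ matching the number of ways to order the collapses. This is exactly the kind of statement proved for $\overline{\cal M}_{0,m}$-type degenerations in the physics literature on factorization of scattering equations, and I would try to transport that argument. \textbf{The main obstacle} is precisely this local multiplicity computation: establishing that the transversal circuit equations, restricted appropriately, impose a complete intersection whose local length is $r!$ rather than $1$ or $2^r$ or something else — i.e.\ showing the normal directions do \emph{not} meet transversally and quantifying the failure exactly. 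The global count above pins down the weighted sum but cannot by itself rule out, say, $\mu_r$ varying with $W$ or taking values forced only in the aggregate; for that one genuinely needs either the symmetry argument to be made rigorous (monodromy acting transitively on the $(m-3-r)!$ points \emph{and} fixing $p$'s stratum setwise, hence constant multiplicity) or the direct local computation. I would expect the symmetry argument to reduce the problem to computing $\mu_r$ for one representative flat, and the direct computation via factorization of the scattering equations on ${\cal M}_{0,m}$ to deliver the value $r!$; combining the two, plus the scalar identity $\binom{m-3}{r}r!(m-3-r)!=(m-3)!$ as a consistency check, would complete the proof. Since the statement is flagged as a conjecture, presumably the authors can carry out the symmetry reduction but not (yet) the local computation in full generality.
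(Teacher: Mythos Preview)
The paper does not prove this statement: it is presented as a conjecture, justified only by the sentence ``Conjecture~\ref{conj:multiplicity} is supported by computations for small $m$, and we believe that the sublattice of flats in Example~\ref{ex:sublattice} may be useful for proving it.'' There is therefore no proof to compare your proposal against.

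Your proposal is not a proof either, and you say so yourself. What you have written is a plausible research outline: (a) the global degree count gives a single linear constraint on the $\mu_r$, which $\mu_r = r!$ satisfies; (b) symmetry/monodromy should force the multiplicity to depend only on $r$; (c) the value $r!$ should emerge from an iterated degeneration along the $r!$ maximal chains from $I_0([m-3])$ down to $I_r(W)$ in the sublattice of type~(ii) flats. Item~(c) is exactly the structure the authors point to as potentially useful, so your instincts align with theirs. You are also right that (a) alone is vacuous as a proof, and that the real content lives in the local computation you flag as the main obstacle.

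Two cautions. First, your symmetry argument in (b) is sketchier than you indicate: the $S_{m-3}$-action permutes the flats $I_r(W)$ for fixed $r$, which gives that $\mu_r$ is independent of $W$, but it does not by itself give constancy across the $(m-3-r)!$ points in a single stratum --- for that you need an irreducibility/monodromy statement for the scattering equations on ${\cal M}_{0,m-r}$, which is true but requires a separate argument. Second, the chain-counting heuristic in (c) is suggestive but not a computation: you would need to show that a one-parameter degeneration collapsing one block at a time sends exactly one simple point per chain to the limit, with no further coincidences, and that is precisely where the difficulty sits. In short, your diagnosis of the gap is accurate, and nothing in the paper closes it.
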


Conjecture \ref{conj:multiplicity} is supported by computations for small $m$, and we believe that the sublattice of flats in Example \ref{ex:sublattice} may be useful for proving it. Assuming our conjecture, we give a full description of the intersection $\mathbb{L}_s \cap \mathcal{R}_{L_m}$. %We are using the same notation as in Section 3.  %Let $L_m$ be as in \eqref{matrix: Lm}, let $A_m$ be the submatrix consisting of the last $m-3$ rows and let $\mathbb{L}_s =\{ A_m \, {\rm diag}(s) \,y =0 \}$. Here ${\rm diag}(s)$ is a diagonal matrix with diagonal entry $s_{ij}$ in the $k$-th position if the $k$-th column of $L_m$ is the $ij$ minor of \eqref{point_of_M0m}.

\begin{theorem} \label{thm:mainsec5}\begin{comment}
    All boundary solutions of the scattering equations on $\mathcal{R}_{L_m}$ are solutions to the scattering equations on $\mathcal{M}_{0,m-r}$ for $r=1,\ldots,m-4$. More precisely, there are $\mathrm{MLdeg}(\mathcal{M}_{0,m}) = (m-3)!$ non-boundary solutions in the open stratum  $\mathbb{L}_{s} \cap \mathcal{R}_{L_m}^{\circ}$ and all boundary solutions lie in 
    \[\bigsqcup\limits_{\substack{I_{m-r} \text{ is a flat of type {\rm (ii)}} \\ { \text{ of } M(L_{m}) \text{ for } m-3>r > 0}}} \mathbb{L}_{s} \cap \mathcal{R}_{L_{I_{m-r}}}^{\circ},\]
    \end{comment}
   Assume that Conjecture \ref{conj:multiplicity} holds. For generic $s \in \mathbb{C}^{\frac{m(m-3)}{2}}$, the set $\mathbb{L}_s \cap {\cal R}_{L_m}$ is finite and it decomposes~as
    \begin{equation} \label{eq:decomp} \mathbb{L}_s \cap {\cal R}_{L_m} \, = \, \bigsqcup_{r = 0}^{m-4} \bigsqcup_{W \in \binom{[m-3]}{m-3-r}} \mathbb{L}_s \cap {\cal R}_{(L_m)_{I_r(W)}}^\circ. \end{equation}
    The component $(r,W)$ in this decomposition consists of $(m-3-r)!$ points with multiplicity~$r!$. The non-zero coordinates $(y_i)_{i \in I_r(W)}$ of these points are the solutions to the scattering equations for the particles indexed by $1, 2, m$ and $W$. 
\end{theorem}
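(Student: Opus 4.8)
The plan is to assemble Theorem~\ref{thm:mainsec5} from three ingredients already developed in the section: the stratification \eqref{eq:sqcup}, the classification of which flats contribute (Theorem~\ref{thm:flats} together with Proposition~\ref{prop: number_of_deg_flats}), and the multiplicity count from Conjecture~\ref{conj:multiplicity}. First I would invoke Theorem~\ref{thm:flats} to get that $\mathbb{L}_s \cap {\cal R}_{L_m}$ is finite for generic $s$, and to reduce the disjoint union \eqref{eq:sqcup} over all flats of $M(L_m)$ to a disjoint union over the \emph{type (ii)} flats only (plus the dense stratum, which is the $r=0$ term): by part (i) of Theorem~\ref{thm:flats}, every flat $I$ with $\mathrm{rank}(A_I^\top) = \mathrm{rank}(L_I)$ contributes nothing to the intersection. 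So the content of \eqref{eq:decomp} is the claim that the type (ii) flats of $M(L_m)$ are \emph{exactly} the flats $I_r(W)$ for $0 \le r \le m-4$ and $W \in \binom{[m-3]}{m-3-r}$ — and this is precisely what Proposition~\ref{prop: number_of_deg_flats} establishes (note $I_0([m-3]) = \{0,\ldots,n\}$ is the dense stratum, which is always ``type (ii)'' in the degenerate sense that it always contributes). I would spell out that the indexing in Proposition~\ref{prop: number_of_deg_flats} matches the $I_r(W)$ notation introduced afterward, so no flat is missed and none is double-counted.

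Next I would count points in each stratum. For a fixed type (ii) flat $I_r(W)$, part (ii) of Theorem~\ref{thm:flats} says the set-theoretic intersection $\mathbb{L}_s \cap {\cal R}_{(L_m)_{I_r(W)}}^\circ$ has $(-1)^{\mathrm{rank}(A_{I_r(W)}^\top)} \cdot \chi(X_{I_r(W)})$ points. Here the key identification is $X_{I_r(W)} \simeq {\cal M}_{0,m-r}$: by construction the linear forms indexed by $I_r(W)$ are $\{x_i : i \in W\} \cup \{x_j - x_i : j\in W, i \in \{0\}\cup W, i<j\}$, which after renaming the $|W| = m-3-r$ active variables is exactly the set of Plücker-type forms $p_{ij}$ defining ${\cal A}_{m-r}$; hence $X_{I_r(W)}$ is the complement of that arrangement modulo its kernel, i.e.\ ${\cal M}_{0,m-r}$. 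Its signed Euler characteristic is $(m-r-3)!$ (as recalled just before Proposition~\ref{prop:recdegM0m}, via Theorem~\ref{thm:varchenko} and the count of bounded chambers, \cite{sturmfels2021likelihood}), and the sign works out because the ambient dimension is $m-r-3$. So each stratum $(r,W)$ contributes $(m-3-r)!$ set-theoretic points. Then I invoke Conjecture~\ref{conj:multiplicity} to say each such point appears with multiplicity $r!$ in $\mathbb{L}_s \cap {\cal R}_{L_m}$. As a consistency check I would verify $\sum_{r=0}^{m-4} \binom{m-3}{r}\,(m-3-r)!\,r! = \sum_{r=0}^{m-4}\binom{m-3}{r}^{-1}$\,... more carefully, $\binom{m-3}{r}(m-3-r)! \, r! = (m-3)!$ for each $r$ (since $\binom{m-3}{r} r! (m-3-r)! = (m-3)!$), so the total intersection degree is $(m-3)! \cdot |\{r : 0\le r\le m-4\}| = (m-3)(m-3)!$, recovering $\deg {\cal R}_{L_m}$ from Proposition~\ref{prop:recdegM0m}; this both confirms the bookkeeping and shows Conjecture~\ref{conj:multiplicity} is consistent with B\'ezout-type degree accounting.

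Finally I would address the last sentence: that the non-zero coordinates $(y_i)_{i\in I_r(W)}$ of these points solve the scattering equations for particles $1, 2, m$ and $W$. This follows from the proof of Theorem~\ref{thm:flats}(ii): on the stratum ${\cal R}_{(L_m)_{I_r(W)}}^\circ$ the equations $A_{I_r(W)}^\top \mathrm{diag}(s)\, y_{I_r(W)} = 0$ together with $y \in {\cal R}_{(L_m)_{I_r(W)}}^\circ$ are, after passing to the matrix $\hat L_{I_r(W)}$ with full row rank, literally the scattering equations \eqref{eq:scatteringeqs} of the arrangement represented by that matrix — which, by the identification above, is the arrangement ${\cal A}_{m-r}$ on the particle index set $\{1,2,m\}\cup W$; the map $\phi$ restricted to this stratum sends those scattering solutions bijectively to the points in question. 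I expect the main obstacle to be nothing deep but rather the careful matching of notation and indices — confirming that the submatrix of $L_m$ supported on $I_r(W)$ really is $L_{m-r}$ up to zero rows and a relabeling of the $m-r$ particles as $1,2,m,W$, and that the deleted-row/type-(ii) condition corresponds exactly to the presence of the variable $x_0 = 1$ — all of which is done in Proposition~\ref{prop: number_of_deg_flats} and its proof. The only genuinely unresolved input is Conjecture~\ref{conj:multiplicity}, which is why the theorem is stated conditionally; everything else is assembling cited results.
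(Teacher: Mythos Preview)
Your overall strategy is right, but there is a genuine gap in how you establish the decomposition \eqref{eq:decomp}. You assert that Proposition~\ref{prop: number_of_deg_flats} shows the type~(ii) flats of $M(L_m)$ are \emph{exactly} the $I_r(W)$. It does not: the proposition only counts type~(ii) flats \emph{whose submatrix is equivalent to some $L_{m-r}$}. It says nothing about possible type~(ii) flats of a different shape. So your reduction of \eqref{eq:sqcup} to \eqref{eq:decomp} via Theorem~\ref{thm:flats}(i) alone is incomplete: you have not ruled out extra strata. The paper is explicit about this: the remark immediately after the proof notes that ``the flats $I_r(W)$ are the only type~(ii) flats of $M(L_m)$'' is a \emph{consequence} of the theorem (assuming the conjecture), not an input to it.

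The paper closes the gap exactly with the computation you relegated to a ``consistency check''. The degree count
\[
\sum_{r=0}^{m-4}\binom{m-3}{r}\,r!\,(m-3-r)! \;=\; (m-3)(m-3)! \;=\; \deg {\cal R}_{L_m}
\]
(using Proposition~\ref{prop:recdegM0m}) is the heart of the argument: the strata $I_r(W)$, with the multiplicities supplied by Conjecture~\ref{conj:multiplicity}, already saturate the total intersection degree of $\mathbb{L}_s$ with ${\cal R}_{L_m}$, so there is no room for any further type~(ii) flat to contribute. That is what forces the disjoint union in \eqref{eq:decomp} to be an equality rather than just an inclusion. Your treatment of the point counts on each stratum and of the last sentence (via Theorem~\ref{thm:flats}(ii) and the identification $X_{I_r(W)} \simeq {\cal M}_{0,m-r}$) is fine; just promote the degree count from afterthought to the main step.
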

\begin{proof}
Conjecture \ref{conj:multiplicity} would imply that the component $(r,W)$ in the righthand side of \eqref{eq:decomp} consists of $(m-3-r)!$ solutions with multiplicity $ r! $. Therefore, 
\[
{\rm deg}(\mathbb{L}_{s} \cap \mathcal{R}_{L_m}) \, \geq \,  \sum\limits_{r=0}^{m-4} \binom{m-3}{r} r! (m-3-r)! = (m-3)(m-3)!.
\]
On the other hand, the lefthand side cannot exceed $ \deg \mathcal{R}_{L_m} =  (m-3)(m-3)! $ (Proposition \ref{prop:recdegM0m}). Thus, we have found all points in $\mathbb{L}_s \cap {\cal R}_{L_m}$.
\end{proof}

Via Theorems \ref{thm:flats} and \ref{thm:mainsec5}, Conjecture \ref{conj:multiplicity} would imply that the flats $I_r(W)$ are the only type (ii) flats of $M(L_m)$. Conversely, if these are the only type (ii) flats, then that implies the set-theoretic decomposition \eqref{eq:decomp} via Theorem \ref{thm:flats}.

\begin{example} \label{ex:M06}
For $m =6$, the intersection $\mathbb{L}_s \cap \mathcal{R}_{L_6}$ contains six distinct solutions of multiplicity one in $\mathcal{R}_{L_6}^{\circ}$,  whose coordinates are all non-zero. These are exactly the solutions to the scattering equations on $\mathcal{M}_{0,6}$. In addition, there are two roots of multiplicity one in each stratum $I_1(W)$, and one root of multiplicity $2$ in each stratum $I_2(W)$. In total, this accounts for $\deg \, {\cal R}_{L_6} = (6-3) (6-3)! = 18$ solutions. 
One can verify these numbers using our package \texttt{ProudfootSpeyerHomotopy} with the optional input \texttt{return\_boundary = true} in the function \texttt{solve\_PS} \cite{mathrepo}.
\end{example}
\begin{remark}\label{rem:badstart}
Unlike in Example \ref{ex:intro}, we should really use a generic matrix $ A_0^{\top}$ in the start system of a Proudfoot-Speyer homotopy for computing the intersection $\mathbb{L}_s \cap {\cal R}_{L_m}$, as prescribed by Algorithm \ref{alg:PShomotopy}. Picking $\omega = (9, \ldots, 1)$ for $m = 6$, the system $ A^\top \mathrm{diag}(s) y = 0$ has 8 solutions of multiplicity 1 and 5 solutions of multiplicity 2 on $V(J)$. The computation is found at \cite{mathrepo}. Hence, the start solutions are not regular, and not suitable for a homotopy continuation algorithm. 
\end{remark}

\section{Hilbert regularity} \label{sec:6}
The Proudfoot-Speyer homotopy in Section \ref{sec:4} is a numerical continuation method for solving the scattering equations associated to any hyperplane arrangement. It works inherently over the complex numbers, and uses floating point arithmetic. This section offers a more algebraic view. Let $K$ be a field of characteristic $0$, e.g., $\mathbb{Q}$, $\mathbb{R}$, $\mathbb{C}$ or  $\mathbb{Q}(u_0, \ldots, u_n)$. We assume that $L$ has entries in $K$ and study the \emph{Hilbert regularity} of the algebra $K[{\cal R}_L]/I_u$. We demonstrate through an example how this determines the size of \emph{Macaulay matrices} used for solving our equations via Gr\"obner basis and resultant methods. We start with definitions.

%This section investigates the complexity of solving the scattering equations (\ref{eq:scatteringeqs}) using symbolic methods. Gröbner basis computations are at the core of the most used techniques to solve a polynomial system, hence the complexity of computing the solutions is expressed as the complexity of the Gröbner basis computations. This is done using the Macaulay matrices \textcolor{red}{[ref]} up to a certain degree, known as the \emph{solving degree} of the ideal generated by the polynomial equations, see \cite{salizzoni2023upperboundsolvingdegree} for recent developments. However, computing the solving degree is often as hard as computing a Gröbner basis, so it is studied through other related invariants, such as the Hilbert regularity of the ideal. The main result of this section is Theorem \ref{thm:hilbert_intro}, that gives an upper bound for the regularity ${\rm Reg}(I(\mathbb{L}))$. Computational evidences show the bound is exact in all the tested cases. We recall the main definitions.

Let $R$ be a finitely generated $\mathbb{Z}$-graded $K$-algebra: $R = \bigoplus_{q\in \mathbb{Z}} R_q$. The reader should think of $R$ as the homogeneous coordinate ring $K[V] = K[x_0,\ldots,x_n]/I(V)$ of a projective variety $V \subset \mathbb{P}^n (\overline{K})$. The \emph{Hilbert function} of $R$ is
\[ {\rm HF}_R: \mathbb{Z}\longrightarrow \mathbb{Z}, \quad  {\rm HF}_R(q)= {\rm dim}_K (R_q) .\] 
A theorem by Hilbert \cite[Theorem~4.1.3]{Bruns_Herzog_1998} says that this function agrees with a polynomial for $q\gg 0$. This is called the \emph{Hilbert polynomial} of $R$, denoted by ${\rm HP}_R$. If $R =K[V]$ for an equidimensional projective variety $V \subset \mathbb{P}^n (\overline{K})$ of dimension $d$ and degree $k$, then ${\rm HP}_R(q)$ is a degree $d$ polynomial in $q$ with leading term $\frac{k}{d!}\,q^d$. The \emph{Hilbert regularity} of $R$ is the smallest degree from which the Hilbert function and the Hilbert polynomial agree:
    \[ {\rm HReg}(R) = \min\{ i \in \mathbb{Z} \,:\, {\rm HF} _{R}(q) = {\rm HP}_{R}(q) \ \text{ for every } \ q\geq i \}. \]
%We recall that the quotient of a graded algebra by a homogeneous ideal inherits the graded structure, so we can consider its Hilbert function.
%\begin{definition}
   % The \emph{Hilbert regularity} of a homogeneous ideal $I$ in a finitely generated $\mathbb{Z}-$graded  $\mathbb{K}-$algebra $R$ is the smallest integer from which the Hilbert function and the Hilbert polynomial agree:
   % \[ {\rm HReg}(I) = \min\{ i \in \mathbb{Z}: {\rm HF} _{R/I}(j) = {\rm HP}_{R/I}(j) \ \text{ for every } \ j\geq i \}. \]
%\end{definition}
%\noindent
%By Theorem \ref{thm:proudfoot-speyer}, the ideal $I(\mathcal{R}_L)$ is homogeneous and the broken circuit ring $\mathbb{C}[\mathcal{R}_L]$ is a graded algebra.
%We consider a linear subspace $\mathbb{L}\subseteq \mathbb{P}^n$ defined by $d$ linear equations in $\mathbb{C}[\mathcal{R}_L]$. Hence $\mathbb{L}\cap\mathcal{R}_L$ consists of ${\rm deg}(\mathcal{R}_L)$ many points and it is a $0-$dimensional variety. Its coordinate ring $\mathbb{C}[\mathcal{R}_L]/I(\mathbb{L})$ has Krull dimension equal to $1$ and the Hilbert polynomial is the constant ${\rm HP}_{ \mathbb{C}[\mathcal{R}_L]/I(\mathbb{L})}(i) = {\rm deg}(\mathcal{R}_L)$. 
%An upper bound of ${\rm HReg}(I(\mathbb{L}))$ is given by the Hilbert regularity of the ambient ring $\mathbb{C}[\mathcal{R}_L]$. 
All definitions above apply to the ring $K[{\cal R}_L]= K[x_0,\ldots,x_n]/I({\cal R}_L)$, where $I({\cal R}_L)$ is generated by the polynomials $f_C$ in Theorem \ref{thm:proudfoot-speyer} with coefficients in~$K$.

\begin{proposition}\label{prop: HReg broken circuit ring}
    The Hilbert regularity of ${\cal R}_L$ satisfies ${\rm HReg}(K[\mathcal{R}_L]) \leq 0$ and equality holds if and only if the matroid $M(L)$ is connected.
\end{proposition}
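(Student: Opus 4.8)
The plan is to reduce the statement to combinatorics of the matroid $M = M(L)$ via the Proudfoot--Speyer degeneration. Recall from \cite[Theorem~4]{Proudfoot2006} (used above via Theorem~\ref{thm:proudfoot-speyer} and Proposition~\ref{prop:reciprocal_degree}) that for a generic linear order $\omega$ on $\{0,\dots,n\}$ the ideal $\mathrm{in}_\omega I(\mathcal{R}_L)$ is the squarefree monomial ideal of the broken circuit complex $\Delta := \mathrm{bc}_\omega(L)$, a pure simplicial complex of dimension $d$ whose facets are the no-broken-circuit bases of $M$. This ideal, and hence the Hilbert function of $K[\mathcal{R}_L]=K[y]/I(\mathcal{R}_L)$, depends only on $M$ (char $0$), not on $K$, and coincides with the Hilbert function of the Stanley--Reisner ring $K[\Delta]$. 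Thus $\mathrm{HF}_{K[\mathcal{R}_L]} = \mathrm{HF}_{K[\Delta]}$ as functions on $\mathbb{Z}$, so the two rings share the same Hilbert polynomial and the same Hilbert regularity; it therefore suffices to analyze $\mathrm{HReg}(K[\Delta])$.

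First I would record two facts about $\Delta$. Since $\mathcal{R}_L$ is a genuine reciprocal linear space, no $\ell_i$ vanishes identically, so $M$ is loopless; hence the $\omega$-minimal element of $\{0,\dots,n\}$ lies in no broken circuit, and $\Delta$ is a cone over its link at that vertex. The $h$-polynomial of a cone equals that of its base, which here has dimension $d-1$, so $\deg h_\Delta \le d$, i.e.\ the top entry $h_{d+1}(\Delta)$ of the $h$-vector vanishes. Next, writing the Hilbert series of $K[\Delta]$ in the standard form $h_\Delta(t)/(1-t)^{d+1}$, where $h_\Delta(t) = \sum_{i\ge 0} h_i t^i$ is the $h$-polynomial, a routine power-series computation shows that $\mathrm{HF}_{K[\Delta]}(q)$ agrees with its Hilbert polynomial exactly for $q \ge \deg h_\Delta - d$ (the discrepancy at $q = \deg h_\Delta - d - 1$ is $\pm$ the leading coefficient of $h_\Delta$, a nonzero integer). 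Hence $\mathrm{HReg}(K[\Delta]) = \deg h_\Delta - d \le 0$, which proves the first assertion. (One could also invoke $\Delta$ Cohen--Macaulay, by \cite[Proposition~7]{Proudfoot2006} and flatness, so $h_i \ge 0$; this is not needed.)

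For the characterization of equality, the crux is to identify the top $h$-number $h_d(\Delta)$. By Whitney's classical broken-circuit theorem, the face numbers of $\Delta$ are, up to alternating sign, the coefficients of the characteristic polynomial $\chi_M(\lambda)$; converting the $f$-polynomial to the $h$-polynomial gives $h_\Delta(t) = (-1)^{r} t^{r}\,\chi_M\!\left(1 - \tfrac1t\right)$ with $r = d+1$. Since $M$ is loopless, $(\lambda - 1)\mid \chi_M(\lambda)$, so this is indeed a polynomial of degree $\le d$, and extracting the coefficient of $t^{d}$ yields $h_d(\Delta) = (-1)^{d}\chi_M'(1) = \beta(M)$, Crapo's beta invariant; equivalently, $\Delta$ is the cone over the reduced broken circuit complex, whose top $h$-number is classically $\beta(M)$. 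Now apply Crapo's theorem: for a loopless matroid on a nonempty ground set, $\beta(M) > 0$ when $M$ is connected and $\beta(M) = 0$ when $M$ is disconnected — the latter because $\chi$ is multiplicative over direct sums and $\chi_{M'}(1) = 0$ for every loopless nonempty summand $M'$. If $M(L)$ is connected, then $h_d(\Delta) = \beta(M) > 0$ and $h_{d+1}(\Delta) = 0$, so $\deg h_\Delta = d$ and $\mathrm{HReg}(K[\mathcal{R}_L]) = 0$. If $M(L)$ is disconnected, then $h_d(\Delta) = h_{d+1}(\Delta) = 0$, so $\deg h_\Delta \le d-1$ and $\mathrm{HReg}(K[\mathcal{R}_L]) \le -1 < 0$. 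This gives the equivalence.

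The only non-elementary ingredients are the identity $h_d(\Delta) = \beta(M)$ (Whitney's broken-circuit theorem together with the reduction of $\chi_M$ by $\lambda-1$) and Crapo's characterization of matroid connectivity through positivity of $\beta$; both are classical, so the main work is assembling them with correct $h$-index bookkeeping and handling small edge cases (a one-element ground set or parallel elements, where $M(L)$ is automatically connected and $\mathrm{HReg} = 0$). The cone observation, the flatness-invariance of the Hilbert function, and the power-series extraction of the regularity from $h_\Delta$ are routine.
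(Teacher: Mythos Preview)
Your proof is correct and follows essentially the same route as the paper: pass to the Stanley--Reisner ring of the broken circuit complex via the Gr\"obner degeneration, read the Hilbert regularity as $\deg h_\Delta - d$, and identify the vanishing or nonvanishing of the top $h$-number with the beta invariant and hence with connectivity of $M(L)$. The only difference is packaging: the paper cites Bj\"orner's Propositions~7.4.7(ii),(iii) and~7.4.8 for the facts $\deg h_\Delta \le d$, $h_d(\Delta)=\beta(M)$, and $\beta(M)\neq 0 \Leftrightarrow M$ connected, whereas you unpack these via the cone observation, Whitney's broken-circuit formula for $\chi_M$, and Crapo's theorem.
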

\begin{proof}
    The Hilbert regularity is read from the Hilbert series 
    \[ {\rm HS}_{K[\mathcal{R}_L]}(q) = \dfrac{h_{{K[\mathcal{R}_L]}}(q)}{(1-q)^{d+1}} \]
    as ${\rm HReg}(K[\mathcal{R}_L]) = {\rm deg}(h_{K[\mathcal{R}_L]}) - (d+1) +1$, see \cite[Proposition~4.1.12]{Bruns_Herzog_1998}. To compute the degree of the numerator we observe that the Hilbert series is left unchanged by a Gröbner degeneration (\cite[Theorem~1.6.2]{bruns2022determinants}). Thus, the Hilbert series of $K[{\cal R}_L]$ is that of the Stanley-Reisner ring $K[y_0,\ldots,y_n]/V(J)$. The~numerator has degree at most $d$ by \cite[Proposition 7.4.7(ii)]{bjorner1992homology} and~\cite[Section 2]{Proudfoot2006}. The degree is equal to $d$ if and only if the beta invariant $\beta(M(L))$ of the matroid is nonzero by \cite[Proposition 7.4.7(iii)]{bjorner1992homology}. The latter condition is equivalent to the matroid $M(L)$ being connected \cite[Proposition 7.4.8]{bjorner1992homology}.   %e Stanley-Reisner ring of Since $\mathbb{C}[\mathcal{R}_L]$ flatly degenerates to the Stanley-Reisner ring ${\rm SR}({\rm bc}_\prec(L))$, its Hilbert series is well-studied and the numerator has degree  equal the degree of the denominator, that is the dimension of the ring \cite[Cor.~1.15]{miller2004combinatorial}. Therefore we have ${\rm HReg}(\mathbb{C}[\mathcal{R}_L])=1. $
\end{proof}

Let $\mathbb{L} \subset \mathbb{P}^n({\overline{K}})$ be a linear space of dimension $n-d$, defined over $K$, so that $\overline{K}[{\cal R}_L]/I(\mathbb{L})$ has Krull dimension $1$. Let $h \in K[{\cal R}_L]_k$ be of degree $k$ and such that $\overline{K}[{\cal R}_L]/(I(\mathbb{L}) +\langle h \rangle)$ has Krull dimension $0$. Geometrically, this means that ${\mathbb{L} \cap {\cal R}_L}$ consists of finitely many points, and $h$ does not vanish at any of these. To emphasize this geometric interpretation we write $I(\mathbb{L} \cap V_h) = I(\mathbb{L}) +\langle h\rangle$.
\begin{theorem} \label{thm:regularitysec6}
Let $\mathbb{L}$ and $h$ be as above. We have 
    \begin{enumerate}
        \item ${\rm HReg}(K[{\cal R}_L]/I(\mathbb{L})) \leq d$ and ${\rm HF}_{K[{\cal R}_L]/I(\mathbb{L})}(q) = \deg {\cal R}_L$ for $q \geq d$,
        \item ${\rm HReg}(K[{\cal R}_L]/I(\mathbb{L} \cap V_h) \leq d+k$ and ${\rm HF}_{K[{\cal R}_L]/I(\mathbb{L} \cap V_h)}(q) = 0$ for $q \geq d +k$.
    \end{enumerate}
\end{theorem}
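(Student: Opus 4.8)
The plan is to derive both statements from a single Hilbert-series computation, relying on two facts about reciprocal linear spaces that are already available: the ring $K[\mathcal{R}_L]$ is arithmetically Cohen--Macaulay of Krull dimension $d+1$ (\cite[Proposition 7]{Proudfoot2006}), and its Hilbert series has the form $\mathrm{HS}_{K[\mathcal{R}_L]}(q) = h(q)/(1-q)^{d+1}$ with $\deg h \le d$. The bound on $\deg h$ is exactly the inequality $\mathrm{HReg}(K[\mathcal{R}_L]) \le 0$ of Proposition \ref{prop: HReg broken circuit ring}, read through the identity $\mathrm{HReg}(K[\mathcal{R}_L]) = \deg h - d$. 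I also record that $h(1) = \deg \mathcal{R}_L$, since the degree of a projective variety is the value of its $h$-polynomial at $1$. No further input about the combinatorics of $M(L)$ is needed.

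For part (i): the ideal $I(\mathbb{L})$ is generated by $d$ linear forms, and the hypothesis that $\overline{K}[\mathcal{R}_L]/I(\mathbb{L})$ has Krull dimension $1$ — a condition insensitive to the base field, hence valid already over $K$ — says these forms cut the Krull dimension from $d+1$ down to $1$, i.e.\ by exactly one at each step. Since $K[\mathcal{R}_L]$ is Cohen--Macaulay, such a homogeneous sequence is a regular sequence (it extends to a homogeneous system of parameters, and in a Cohen--Macaulay graded ring every system of parameters is a regular sequence, see \cite{Bruns_Herzog_1998}). Quotienting by a degree-one nonzerodivisor multiplies the Hilbert series by $(1-q)$, so
\[ \mathrm{HS}_{K[\mathcal{R}_L]/I(\mathbb{L})}(q) \, = \, (1-q)^d \cdot \frac{h(q)}{(1-q)^{d+1}} \, = \, \frac{h(q)}{1-q} \, = \, h(q)\,\sum_{i \ge 0} q^i. \]
Reading off coefficients, $\mathrm{HF}_{K[\mathcal{R}_L]/I(\mathbb{L})}(q) = \sum_{0 \le j \le q} h_j$, which for $q \ge \deg h$ stabilizes to $h(1) = \deg \mathcal{R}_L$. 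As $\deg h \le d$, this gives $\mathrm{HF}_{K[\mathcal{R}_L]/I(\mathbb{L})}(q) = \deg \mathcal{R}_L$ for all $q \ge d$ and $\mathrm{HReg}(K[\mathcal{R}_L]/I(\mathbb{L})) \le \deg h \le d$ (this also recovers Theorem \ref{thm:hilbert_intro}).

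For part (ii): by the same dimension-drop criterion, the hypothesis that $\overline{K}[\mathcal{R}_L]/(I(\mathbb{L}) + \langle h \rangle)$ has Krull dimension $0$ makes $h$ a nonzerodivisor on the one-dimensional Cohen--Macaulay ring $K[\mathcal{R}_L]/I(\mathbb{L})$. Quotienting by a nonzerodivisor of degree $k$ multiplies the Hilbert series by $(1-q^k)$, so
\[ \mathrm{HS}_{K[\mathcal{R}_L]/I(\mathbb{L}\cap V_h)}(q) \, = \, (1-q^k)\,\frac{h(q)}{1-q} \, = \, h(q)\,(1 + q + \cdots + q^{k-1}), \]
a polynomial of degree $\deg h + k - 1 \le d + k - 1$. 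Hence $\mathrm{HF}_{K[\mathcal{R}_L]/I(\mathbb{L}\cap V_h)}(q) = 0$ for all $q \ge d+k$, and since the associated Hilbert polynomial is identically zero, $\mathrm{HReg}(K[\mathcal{R}_L]/I(\mathbb{L}\cap V_h)) \le \deg h + k \le d+k$.

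All of the steps are short. The only point that needs genuine care is the passage from the Krull-dimension hypotheses to the regular-sequence conclusions: one must invoke that in a Cohen--Macaulay graded ring a homogeneous sequence dropping Krull dimension by one at each step is a regular sequence, and check that the Krull-dimension hypotheses, phrased over $\overline{K}$, descend to $K$ — which is automatic, as Krull dimension and the Hilbert function are stable under extension of the ground field. I do not expect a real obstacle here; the substantive work, namely arithmetic Cohen--Macaulayness and the bound $\deg h \le d$, has already been done in Proposition \ref{prop: HReg broken circuit ring} via the Proudfoot--Speyer Gr\"obner degeneration.
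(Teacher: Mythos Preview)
Your proof is correct and follows essentially the same route as the paper's: both rely on the arithmetic Cohen--Macaulayness of $K[\mathcal{R}_L]$ from \cite{Proudfoot2006}, the bound $\mathrm{HReg}(K[\mathcal{R}_L])\le 0$ from Proposition~\ref{prop: HReg broken circuit ring}, and the observation that the $d$ linear forms together with $h$ form a regular sequence. The only difference is presentational: the paper outsources the Hilbert-series computation to \cite[Theorem~5.4]{betti2025solving}, whereas you carry it out explicitly via the short exact sequences for multiplication by a nonzerodivisor, arriving at $\mathrm{HS}_{K[\mathcal{R}_L]/I(\mathbb{L})}(q)=h(q)/(1-q)$ and $\mathrm{HS}_{K[\mathcal{R}_L]/I(\mathbb{L}\cap V_h)}(q)=h(q)(1+q+\cdots+q^{k-1})$.
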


\begin{proof}
    Since $\mathcal{R}_L$ is arithmetically Cohen-Macaulay \cite{Proudfoot2006}, the first statement is a direct consequence of Proposition \ref{prop: HReg broken circuit ring} and \cite[Theorem 5.4]{betti2025solving}. The ideal $I(\mathbb{L})$ is generated by $d$ linear forms $f_1, \ldots, f_d \in {\cal R}_L$. The proof of \cite[Theorem 5.4]{betti2025solving} is easily adapted to the regular sequence $f_1, \ldots, f_d, h$ to show (ii). 
\end{proof}

\begin{comment}
\begin{proof}[Proof of Theorem \ref{thm:hilbert_intro}]
The \emph{saturation} of $I(\mathbb{L})$ is, by definition, the ideal  $I(\mathbb{L})^{\rm{sat}} = \{ f \in \mathbb{C}[\mathcal{R}_L]  : \mathbb{C}[\mathcal{R}_L]_i\cdot f \subset I(\mathbb{L}), \text{ for some } i\in \mathbb{Z} \}$. Since the variety $\mathcal{R}_L$ is arithmetically Cohen-Macaulay \cite{Proudfoot2006} and $I(\mathbb{L})$ is generated by ${\rm dim}(\mathcal{R}_L)$ many linear forms, we have that $I(\mathbb{L})$ is saturated, i.e. $I(\mathbb{L})= I(\mathbb{L})^{\rm {sat}}$ by Lemma 5.2 in \cite{betti2025solving}. Applying  \cite[Thm.~5.4]{betti2025solving} combined with Proposition \ref{prop: HReg broken circuit ring} we obtain the desired upper bound for the Hilbert regularity of $I(\mathbb{L})$: 

\[  {\rm HReg}(K[{\cal R}_L/I(\mathbb{L})) \leq d+ {\rm HReg}(K[\mathcal{R}_L])  \leq d. \]
By definition of Hilbert regularity this yields:
\[  {\rm HF}_{K[\mathcal{R}_L]/I(\mathbb{L})}(i) = {\rm HP}_{ K[\mathcal{R}_L]/I(\mathbb{L})}(i) = {\rm deg}(\mathcal{R}_L) \text{ for every } i\geq d. \]
\end{proof}
\noindent
Using \texttt{Macaulay2} we can see that the Hilbert function of $I(\mathbb{L})$ is not the degree of $\mathcal{R}_L$ for $i<d$ for any $2\leq d<n\leq 10$, so in these cases ${\rm HReg}(I(\mathbb{L})) = d$.
\end{comment}

Theorem \ref{thm:regularitysec6} implies Theorem \ref{thm:hilbert_intro}. A detailed investigation of the implications of Theorem \ref{thm:regularitysec6} for symbolic solutions to the scattering equations is beyond the scope of this paper. We illustrate its use by means of an example, in which we construct a \emph{Macaulay matrix} to study the intersection $\mathbb{L}_u \cap {\cal R}_L \subset \mathbb{P}^n(\overline{K})$. 

\begin{example}
    We turn back to Example \ref{ex:intro}. Let $K =\mathbb{Q}(u_0,u_1,u_2,u_3,t)$ be the field of rational functions in the exponents $u$ and a new variable $t$. Let $h =y_2-ty_1$. The determinant of the following $19 \times 19$ matrix 
    % MacaulayMatrix in degree 3 for the scattering equations and y2-t*y1 on the broken circuit ring of the arrangement defined by uniform matroid U_3,4.
 \setlength\arraycolsep{1.5pt}
\[M = \tiny \begin{array}{l}\vphantom{u_{0}0-u_{2}-2\,u_{3}000000000000000}\\\vphantom{0u_{0}000-u_{2}-2\,u_{3}000000000000}\\\vphantom{00u_{0}00-2\,u_{3}2\,u_{3}-u_{2}00000-2\,u_{3}00000}\\\vphantom{000u_{0}0-u_{2}u_{2}0-2\,u_{3}0000-u_{2}00000}\\\vphantom{0000u_{0}00000-u_{2}-2\,u_{3}0000000}\\\vphantom{00000u_{0}000000-u_{2}-2\,u_{3}00000}\\\vphantom{000000u_{0}000000-u_{2}-2\,u_{3}0000}\\\vphantom{0000000u_{0}0000000-u_{2}-2\,u_{3}00}\\\vphantom{00000u_{0}-u_{0}000000u_{0}00-u_{2}-2\,u_{3}0}\\\vphantom{00000000u_{0}00000000-u_{2}-2\,u_{3}}\\\vphantom{0u_{1}-2\,u_{2}-u_{3}000000000000000}\\\vphantom{0000u_{1}-2\,u_{2}-u_{3}000000000000}\\\vphantom{00000u_{1}-u_{3}u_{3}-2\,u_{2}00000-u_{3}00000}\\\vphantom{000000000u_{1}-2\,u_{2}-u_{3}0000000}\\\vphantom{0000000000u_{1}0-2\,u_{2}-u_{3}00000}\\\vphantom{000000000000u_{1}00-2\,u_{2}-u_{3}00}\\\vphantom{0-t10000000000000000}\\\vphantom{0000-t10000000000000}\\\vphantom{000000000-t100000000}\end{array}\left(\!\begin{array}{ccccccccccccccccccc}
      \vphantom{}u_{0}&0&-u_{2}&-2\,u_{3}&0&0&0&0&0&0&0&0&0&0&0&0&0&0&0\\
      \vphantom{}0&u_{0}&0&0&0&-u_{2}&-2\,u_{3}&0&0&0&0&0&0&0&0&0&0&0&0\\
      \vphantom{}0&0&u_{0}&0&0&-2\,u_{3}&2\,u_{3}&-u_{2}&0&0&0&0&0&-2\,u_{3}&0&0&0&0&0\\
      \vphantom{}0&0&0&u_{0}&0&-u_{2}&u_{2}&0&-2\,u_{3}&0&0&0&0&-u_{2}&0&0&0&0&0\\
      \vphantom{}0&0&0&0&u_{0}&0&0&0&0&0&-u_{2}&-2\,u_{3}&0&0&0&0&0&0&0\\
      \vphantom{}0&0&0&0&0&u_{0}&0&0&0&0&0&0&-u_{2}&-2\,u_{3}&0&0&0&0&0\\
      \vphantom{}0&0&0&0&0&0&u_{0}&0&0&0&0&0&0&-u_{2}&-2\,u_{3}&0&0&0&0\\
      \vphantom{}0&0&0&0&0&0&0&u_{0}&0&0&0&0&0&0&0&-u_{2}&-2\,u_{3}&0&0\\
      \vphantom{}0&0&0&0&0&u_{0}&-u_{0}&0&0&0&0&0&0&u_{0}&0&0&-u_{2}&-2\,u_{3}&0\\
      \vphantom{}0&0&0&0&0&0&0&0&u_{0}&0&0&0&0&0&0&0&0&-u_{2}&-2\,u_{3}\\
      \vphantom{}0&u_{1}&-2\,u_{2}&-u_{3}&0&0&0&0&0&0&0&0&0&0&0&0&0&0&0\\
      \vphantom{}0&0&0&0&u_{1}&-2\,u_{2}&-u_{3}&0&0&0&0&0&0&0&0&0&0&0&0\\
      \vphantom{}0&0&0&0&0&u_{1}-u_{3}&u_{3}&-2\,u_{2}&0&0&0&0&0&-u_{3}&0&0&0&0&0\\
      \vphantom{}0&0&0&0&0&0&0&0&0&u_{1}&-2\,u_{2}&-u_{3}&0&0&0&0&0&0&0\\
      \vphantom{}0&0&0&0&0&0&0&0&0&0&u_{1}&0&-2\,u_{2}&-u_{3}&0&0&0&0&0\\
      \vphantom{}0&0&0&0&0&0&0&0&0&0&0&0&u_{1}&0&0&-2\,u_{2}&-u_{3}&0&0\\
      \vphantom{}0&-t&1&0&0&0&0&0&0&0&0&0&0&0&0&0&0&0&0\\
      \vphantom{}0&0&0&0&-t&1&0&0&0&0&0&0&0&0&0&0&0&0&0\\
      \vphantom{}0&0&0&0&0&0&0&0&0&-t&1&0&0&0&0&0&0&0&0
      \end{array}\!\right)\]
    \noindent
satisfies $(\det \, M)(u,t) = P(u) \cdot Q(u,t)$,
    where $Q(u,t)$ is an irreducible polynomial of degree $3$ in $t$. The roots of $Q$ are algebraic functions in $u$, which are the values of the rational function $\frac{y_2}{y_1}$ at the points in $\mathbb{L}_u \cap {\cal R}_L$, i.e., the points satisfying \eqref{eq:scatteringexample}. In particular, normalizing $Q(u,t)$ to a monic polynomial in $t$, we read the values of the elementary symmetric functions at the $\frac{y_2}{y_1}$ coordinates of the scattering solutions from its coefficients. We justify this claim via Theorem~\ref{thm:regularitysec6}.
    
    The ring $K[{\cal R}_L]$ is the quotient of $K[y_0,y_1,y_2,y_3]$ by the principal ideal of a cubic, seen in \eqref{eq:scatteringexample}. The Hilbert function of $K[{\cal R}_L]$ for $q = 0, 1, 2, 3$ is given by $1,4,10,19$. A basis for the 19-dimensional $K$-vector space $K[{\cal R}_L]_3$ is 
    \begin{equation} \label{eq:basisRL} \begin{matrix} y_{0}^{3},\:y_{0}^{2}y_{1
      },\:y_{0}^{2}y_{2},\:y_{0}^{2}y_{
      3},\:y_{0}y_{1}^{2},\:y_{0}y_{1}y
      _{2},\:y_{0}y_{1}y_{3},\:y_{0}y_{
      2}^{2},\:y_{0}y_{3}^{2}, \\ \:y_{1}^{
      3},\:y_{1}^{2}y_{2},\:y_{1}^{2}y
      _{3},\:y_{1}y_{2}^{2},\:y_{1}y_{2
      }y_{3},\:y_{1}y_{3}^{2},\:y_{2}^{
      3},\:y_{2}^{2}y_{3},\:y_{2}y_{3
      }^{2},\:y_{3}^{3}.\end{matrix}\end{equation}
    %The ideal $I({\mathbb{L}}) = \langle u_0y_0 -u_2y_2-2u_3y_3 , u_1y_1-2u_2y_2-u_3y_3 \rangle$ defines a quotient $K[{\cal R}_L]/I(\mathbb{L})$ of Krull dimension $1$ ($\mathbb{L} \cap {\cal R}_L$ is finite). 
    By Theorem \ref{thm:regularitysec6}(ii), we can find 19 generators of $(I(\mathbb{L}_u) +\langle h \rangle)_3$ such that their expansions in the basis \eqref{eq:basisRL} of $K[{\cal R}_L]$ give an invertible matrix over $K$. That matrix is $M$. The first 16 rows represent a basis of $I(\mathbb{L}_u)_3$, which has codimension three in $K[{\cal R}_L]_3$ by Theorem \ref{thm:regularitysec6}(i). If we specialize $t$ to the value of $\frac{y_2}{y_1}$ at a point $y$ in $\mathbb{L}_u \cap {\cal R}_L$, then $h$ vanishes at $y$. Evaluating the basis monomials \eqref{eq:basisRL} at~$y$ gives a non-zero kernel vector of $M$, which shows that $(\det \,  M)(u,t) =0$.

    We note that replacing $h$ by $h_2(y)-th_1(y)$ for any non-zero linear forms $h_1, h_2 \in K[{\cal R}_L]_1$ only changes the last three rows of $M$, and the roots of its determinant are the values of $h_2/h_1$ at the three solutions. Increasing the degree of $h$ to $k$ would increase the size of the matrix to ${\rm HF}_{{\cal R}_L}(d+k)$, and allows to evaluate more complicated rational functions and their traces. For instance, one can evaluate the CHY amplitude by choosing $h_1$ and $h_2$ to be the numerator and denominator of the toric Hessian determinant of ${\cal L}_u$, as in \cite[Theorem 13]{sturmfels2021likelihood}. This computation is implemented in the \texttt{CHYamplitude.m2} file available at \cite{mathrepo}. 
\end{example}

\paragraph{Acknowledgements.}
We thank Emanuele Delucchi and Cynthia Vinzant for helpful conversations and for useful pointers to the literature. \,

\vspace{0.5cm}
\begin{footnotesize}
\noindent {\bf Funding statement}:
This project started at a workshop held at MPI MiS Leipzig, supported by the European Union (ERC, UNIVERSE PLUS, 101118787).
Views and opinions expressed are however those of the authors only and do not
necessarily reflect those of the European Union or the European Research Council
Executive Agency. Neither the European Union nor the granting authority can
be held responsible for them.
\end{footnotesize}

\vspace{-1cm}

\bibliography{references}

\begin{thebibliography}{10}

\bibitem{bayer1987criterion}
D.~Bayer and M.~Stillman.
\newblock A criterion for detecting m-regularity.
\newblock {\em Inventiones mathematicae}, 87(1):1--11, 1987.

\bibitem{mathrepo}
B.~Betti, V.~Borovik, and S.~Telen.
\newblock MathRepo page ProudfootSpeyerDegeneration \url{https://mathrepo.mis.mpg.de/ProudfootSpeyerDegeneration}, 2024.

\bibitem{betti2025solving}
B.~Betti, M.~Panizzut, and S.~Telen.
\newblock Solving equations using {K}hovanskii bases.
\newblock {\em Journal of Symbolic Computation}, 126:102340, 2025.

\bibitem{bjorner1992homology}
A.~Bj{\"o}rner.
\newblock The homology and shellability of matroids and geometric lattices.
\newblock {\em Matroid applications}, 40:226--283, 1992.

\bibitem{breiding2018homotopycontinuationjl}
P.~Breiding and S.~Timme.
\newblock Homotopy{C}ontinuation.jl: A package for homotopy continuation in {J}ulia, 2018.

\bibitem{bruns2022determinants}
W.~Bruns, A.~Conca, C.~Raicu, and M.~Varbaro.
\newblock {\em Determinants, Gr{\"o}bner Bases and Cohomology}.
\newblock Springer Monographs in Mathematics. Springer International Publishing, 2022.

\bibitem{Bruns_Herzog_1998}
W.~Bruns and H.~J. Herzog.
\newblock {\em Cohen-Macaulay Rings}.
\newblock Cambridge Studies in Advanced Mathematics. Cambridge University Press, 2 edition, 1998.

\bibitem{cachazo2014scattering}
F.~Cachazo, S.~He, and E.~Y. Yuan.
\newblock Scattering equations and {K}awai-{L}ewellen-{T}ye orthogonality.
\newblock {\em Physical Review D}, 90(6):065001, 2014.

\bibitem{de2012central}
J.~A. De~Loera, B.~Sturmfels, and C.~Vinzant.
\newblock The central curve in linear programming.
\newblock {\em Foundations of Computational Mathematics}, 12:509--540, 2012.

\bibitem{Eisenbud}
D.~Eisenbud.
\newblock {\em Commutative Algebra with a View Toward Algebraic Geometry}, volume 150, Graduate Texts in Mathematics.
\newblock Springer, 2003.

\bibitem{Huh2013}
J.~Huh.
\newblock The maximum likelihood degree of a very affine variety.
\newblock {\em Compositio Mathematica}, 149(8):1245–1266, 2013.

\bibitem{HuhSturmfels2014}
J.~Huh and B.~Sturmfels.
\newblock {\em Likelihood Geometry}, page 63–117.
\newblock Springer International Publishing, 2014.

\bibitem{lam2024moduli}
T.~Lam.
\newblock Moduli spaces in positive geometry.
\newblock {\em arXiv:2405.17332}, 2024.

\bibitem{orlik1995number}
P.~Orlik and H.~Terao.
\newblock The number of critical points of a product of powers of linear functions.
\newblock {\em Inventiones mathematicae}, 120:1--14, 1995.

\bibitem{OSCAR}
Oscar -- open source computer algebra research system, version 1.0.4, 2023.

\bibitem{Proudfoot2006}
N.~Proudfoot and D.~Speyer.
\newblock A broken circuit ring.
\newblock {\em Beiträge zur Algebra und Geometrie}, 47(1):161--166, 2006.

\bibitem{sanyal2013entropic}
R.~Sanyal, B.~Sturmfels, and C.~Vinzant.
\newblock The entropic discriminant.
\newblock {\em Advances in mathematics}, 244:678--707, 2013.

\bibitem{Sommese:Wampler:2005}
A.~J. Sommese and C.~W. Wampler.
\newblock {\em The {N}umerical {S}olution of {S}ystems of {P}olynomials {A}rising in {E}ngineering and {S}cience}.
\newblock World Scientific, 2005.

\bibitem{sturmfels2021likelihood}
B.~Sturmfels and S.~Telen.
\newblock Likelihood equations and scattering amplitudes.
\newblock {\em Algebraic Statistics}, 12(2):167--186, 2021.

\bibitem{varchenko1995critical}
A.~Varchenko.
\newblock Critical points of the product of powers of linear functions and families of bases of singular vectors.
\newblock {\em Compositio Mathematica}, 97(3):385--401, 1995.

\end{thebibliography}

\end{document}